\journal{INS}
\newcommand*{\normally}{\mathrel{\ooalign{$|$\hfil\cr\kern+1pt$\thicksim$}}} 
\def\True{\mbox{True}}
\def\False{\mbox{False}}
\def\Void{\mbox{Void}}
\def\myscale{0.7} 
\def\myscale{0.52} 
\def\s~{| \hspace{-1.8 mm} \sim}
\def\C{{\cal C}}
\def\D{{\cal D}}
\def\F{{\cal F}}
\def\I{{\cal I}}
\def\H{{\cal H}}
\def\K{{\cal K}}
\def\L{{\cal L}}
\def\P{{\cal P}}
\def\S{{\cal S}}
\def\U{{\cal U}}
\theoremstyle{definition} 
\newtheorem{definition}{Definition}
\newtheorem{proposition}{Proposition}
\newtheorem{theorem}{Theorem}
\newtheorem{corollary}{Corollary}
\newtheorem{remark}{Remark}
\newtheorem{example}{Example}
\begin{document}

\begin{frontmatter}
\title{Quasi Conjunction, Quasi Disjunction, T-norms and T-conorms: \\ Probabilistic Aspects }
\tnotetext[label0]{Both authors contributed equally to this work}
\author[ag]{Angelo Gilio}
\ead{angelo.gilio@sbai.uniroma1.it}
\author[gs]{Giuseppe Sanfilippo\corref{cor1}}
\ead{giuseppe.sanfilippo@unipa.it}
\cortext[cor1]{Corresponding author}
\address[ag]{Dipartimento di Scienze di Base e Applicate per l'Ingegneria,
University of Rome  ``La Sapienza'',
Via Antonio Scarpa 16, 00161 Roma, Italy}
\address[gs]{Dipartimento di Matematica e Informatica, University of Palermo,
Via Archirafi 34, 90123 Palermo, Italy
}
\begin{abstract}
We make a probabilistic analysis related to some inference rules
which play an important role in nonmonotonic reasoning. In a
coherence-based setting, we study the extensions of a probability
assessment defined on $n$ conditional events to their quasi
conjunction, and by exploiting duality, to their quasi disjunction.
The lower and upper bounds coincide with some well known t-norms and
t-conorms:  minimum, product, Lukasiewicz,  and Hamacher t-norms
and their dual t-conorms.  On this basis we obtain Quasi And and
Quasi Or rules.  These are rules for which any finite family of
conditional events p-entails the associated quasi conjunction and
quasi disjunction. We examine some cases of logical dependencies,
and we study the relations among coherence, inclusion for
conditional events, and p-entailment. We also consider the Or
rule, where quasi conjunction and quasi disjunction of premises
coincide with the conclusion. We analyze further aspects of quasi
conjunction and quasi disjunction, by computing probabilistic bounds
on premises from bounds on conclusions. Finally, we consider
biconditional events, and we introduce the notion of an
$n$-conditional event.  Then we give a probabilistic interpretation
for a generalized Loop rule. In an appendix we provide explicit
expressions for the Hamacher t-norm and t-conorm in the unitary
hypercube.
\end{abstract}
\begin{keyword}
coherence \sep lower/upper probability bounds \sep quasi conjunction/disjunction \sep t-norms/conorms \sep Goodman-Nguyen inclusion relation   \sep generalized Loop rule.
\end{keyword}
\end{frontmatter}
\section{Introduction}
In classical (monotonic) logic, if a conclusion $C$ follows from
some premises, then $C$ also follows when the set of premises is
enlarged; that is, adding premises never invalidates any
conclusions. In contrast, in (nonmonotonic) commonsense reasoning we
are typically in a situation of partial knowledge, and a conclusion
reached from a set of premises may be retracted when some premises
are added. Nonmonotonic reasoning is a relevant topic in the field
of artificial intelligence, and has been studied in literature by
many symbolic and numerical formalisms (see, e.g.
\cite{BeDP97,BGLS02,BGLS05,DuPr94,KrLM90}). A remarkable theory
related to nonmonotonic reasoning has been proposed by Adams in his
probabilistic logic of conditionals (\cite{Adam75}). We recall that
the approach of Adams can be developed with full generality by
exploiting  coherence-based probabilistic reasoning (\cite{deFi70}).
In the setting of coherence  conditional probabilities can be  directly assigned, and  {\em zero probabilities for conditioning events}  can be properly managed
(see, e.g.
\cite{BiGi00,BiGS03,BiGS03b,BiGS12,CoSc02,Gili02,GiSa11a,GoMa06,Sanf12}).
The coherence-based approach is applied in many fields: statistical
analysis,  decision theory, probabilistic default reasoning and
fuzzy theory.  It allows one to manage incomplete probabilistic
assignments in a situation of vague or partial knowledge (see, e.g.
\cite{BrCV12,CaLS07,CaRV10,CGTV12,CoSV12,Gili12,Lad96,LaSA12,ScVa09}).
A basic notion in the work of Adams is the quasi conjunction of conditionals.  This logical operation also plays a relevant role in \cite{DuPr94} (see also \cite{BeDP97}), where a suitable Quasi And rule is introduced to characterize entailment from a knowledge base. In the present article, besides quasi conjunction, we study by duality  the quasi disjunction of conditional events and the associated Quasi Or rule. \\
Theoretical tools which play a relevant role in artificial intelligence and fuzzy logic are t-norms and t-conorms.  These allow one to extend the Boolean operations of conjunction and disjunction to the setting of multi-valued logics. T-norms (first proposed in \cite{Meng42}) and t-conorms were introduced in \cite{ScSk61} and are a subclass of aggregation functions (\cite{GMMP09,GMMP11b,GMMP11}).  They play a basic role in decision theory, information and data fusion, probability theory and risk management.\\
In this paper we give many insights  about probabilistic default
reasoning in the setting of coherence, by making a probabilistic
analysis of the  Quasi And, Quasi Or and Loop inference rules.  Some
results  were already given without proof in \cite{GiSa10}. To
begin, we recall some basic notions and results regarding coherence,
probabilistic default reasoning, and the Hamacher t-norm/t-conorm
(Section 2). Then, we show that some well known t-norms and
t-conorms appear as lower and upper bounds when we propagate
probability assessments on a finite family of conditional events to
the associated quasi conjunction.  By these bounds we obtain the
Quasi And rule.  We also consider  special cases of logical
dependencies associated with the Goodman-Nguyen inclusion relation
(\cite{GoNg88}) and with the compound probability theorem.  Then, we
give two results which identify the strict relationship holding
among coherence, the Goodman-Nguyen inclusion relation, and
p-entailment (Section 3). We deepen a further aspect
of the Quasi And rule by determining the probability bounds on the
premises from given bounds on the conclusion of the rule (Section 4). By
exploiting duality, we give  analogous results  for the quasi
disjunction of conditional events, and we  obtain the Quasi Or rule.
We also examine the Or rule, and we show that quasi conjunction and
quasi disjunction of the premises of this rule both coincide with
its conclusion (Section 5). In a similar way, we then enrich the
Quasi Or rule by determining the probability bounds on the premises
from given bounds on the conclusion of the rule (Section 6).
We consider biconditional events, and we introduce the
notion of an $n$-conditional event, by means of which we give a
probabilistic semantics to a \emph{generalized Loop rule} (Section 7).  Finally,
we give some conclusions and perspectives on future work (Section
8). We illustrate  notions and results with a table and some
figures.\\
The results given in this work may be useful for the treatment of
uncertainty in many applications of statistics and artificial
intelligence, in particular for the probabilistic approach to
inference rules in nonmonotonic reasoning, for the psychology of
uncertain reasoning, and for probabilistic reasoning in the semantic
web (see, e.g., \cite{GiOv12,Klei13,LuSt08,PfKl06,PfKl09}).

\section{Some Preliminary Notions} \label{PRELIM}
In this section we first discuss some basic notions regarding
coherence.  Then, we recall the notions of p-consistency and
p-entailment of Adams (\cite{Adam75}) within the setting of
coherence.
\subsection{Basic notions  on coherence}
As in the approach of de Finetti, events represent uncertain facts
described by  (non ambiguous) logical propositions.  An event $A$ is
a two-valued logical entity which can be true ($T$), or false ($F$).
The indicator of $A$ is a two-valued numerical quantity which is 1,
or 0, according to  whether $A$ is true, or false. We denote by
$\Omega$ the sure event and by $\emptyset$ the impossible one. We
use the same symbols for events and their indicators. Moreover, we
denote by $A\land B$ (resp., $A \vee B$) the logical intersection,
or conjunction (resp., logical union, or disjunction). To simplify
notations, in many cases we  denote the conjunction between $A$ and
$B$ as the product $AB$. We denote by $A^c$ the negation of $A$. Of
course, the truth values for conjunctions, disjunctions and
negations are obtained by applying the propositional calculus. Given
any events $A$ and $B$, we simply write $A \subseteq B$ to denote
that $A$ logically implies $B$, that is  $AB^c=\emptyset$, which
means that $A$ and $B^c$ cannot be both true. Given  $n$ events
$A_1, \ldots, A_n$, as $A_i \vee A_i^c = \Omega \,,\;\; i = 1,
\ldots, n$,  by expanding the expression $\bigwedge_{i=1}^n(A_i \vee
A_i^c)$, we can represent $\Omega$ as the disjunction of $2^n$
logical conjunctions, some of which may be impossible.  The
remaining ones are the atoms, or constituents, generated by $A_1,
\ldots, A_n$. We recall that $A_1, \ldots, A_n$ are logically
independent when the number of atoms generated by them is $2^n$. Of
course, in case of some logical dependencies among $A_1, \ldots,
A_n$ the number of atoms is less than $2^n$. For instance, given two
logically incompatible events $A,B$, as $AB=\emptyset$ the atoms
are: $AB^c, A^cB, A^cB^c$. We remark that, to introduce the basic
notions, an equivalent approach is that of considering a Boolean
algebra $\mathcal{B}$ whose elements are interpreted as events. In
this way events would be combined by means of the Boolean
operations; then to say that $A_1, \ldots, A_n$  are logically
independent would mean that the subalgebra generated by them has
$2^n$ atoms. Concerning conditional events, given two events $A,B$,
with $A \neq \emptyset$, in our approach the conditional event $B|A$
is defined as a three-valued logical entity which is true (T), or
false (F), or void (V), according to whether $AB$ is true, or $AB^c$
is true, or $A^c$ is true, respectively. We recall that, agreeing to
the betting metaphor, if you assess $P(B|A)=p$, then you are willing
to pay an amount $p$ and  to receive 1, or 0, or $p$, according to
whether $AB$ is true, or $AB^c$ is true, or $A^c$ is true (bet
called off), respectively. Given a real function $P : \; \mathcal{F}
\, \rightarrow \, \mathcal{R}$, where $\mathcal{F}$ is an arbitrary
family of conditional events, let us consider a subfamily
$\mathcal{F}_n = \{E_1|H_1, \ldots, E_n|H_n\} \subseteq
\mathcal{F}$, and the vector $\mathcal{P}_n =(p_1, \ldots, p_n)$,
where $p_i = P(E_i|H_i) \, ,\;\; i = 1, \ldots, n$. We denote by
$\mathcal{H}_n$ the disjunction $H_1 \vee \cdots \vee H_n$. As
$E_iH_i \vee E_i^cH_i \vee H_i^c = \Omega \,,\;\; i = 1, \ldots, n$,
by expanding the expression $\bigwedge_{i=1}^n(E_iH_i \vee E_i^cH_i
\vee H_i^c)$, we can represent $\Omega$ as the disjunction of $3^n$
logical conjunctions, some of which may be impossible.  The
remaining ones are the atoms, or constituents, generated by the
family $\mathcal{F}_n$ and, of course, are a partition of $\Omega$.
We denote by $C_1, \ldots, C_m$ the constituents contained in
$\mathcal{H}_n$ and (if $\mathcal{H}_n \neq \Omega$) by $C_0$ the
remaining constituent $\mathcal{H}_n^c = H_1^c \cdots H_n^c $, so
that
\[
\mathcal{H}_n = C_1 \vee \cdots \vee C_m \,,\;\;\; \Omega =
\mathcal{H}_n^c \vee
\mathcal{H}_n = C_0 \vee C_1 \vee \cdots \vee C_m \,,\;\;\; m+1 \leq 3^n
\,.
\]
{\em Interpretation with the betting scheme}. With the pair $(\mathcal{F}_n, \mathcal{P}_n$) we associate the random gain
${\mathcal{G}} = \sum_{i=1}^n s_iH_i(E_i - p_i)$,
where $s_1, \ldots, s_n$ are $n$ arbitrary real numbers. We observe that ${\mathcal{G}}$ is the difference between the amount that you receive, $\sum_{i=1}^n s_i(E_iH_i + p_iH_i^c)$, and the amount that you pay, $\sum_{i=1}^n s_ip_i$, and represents the net gain from engaging each transaction $H_i(E_i - p_i)$, the scaling and meaning (buy or sell) of the transaction being specified by the magnitude and the sign of $s_i$ respectively. Let $g_h$
be the value of $\mathcal{G}$ when $C_h$ is true; of course, $g_0 = 0$.
Denoting by $G_{\mathcal{H}_n}=\{g_1, \ldots, g_m\}$ the set of values of $\mathcal{G}$ restricted to $\H_n$, we have
\begin{definition}\label{COER-BET} {\rm The function $P$ defined on $\mathcal{F}$ is said to be {\em coherent}
if and only if, for every integer $n$, for every finite sub-family $\mathcal{F}_n$
$\subseteq \mathcal{F}$ and for every $s_1, \ldots, s_n$, one has:
$\min  G_{\mathcal{H}_n} \leq 0 \leq \max G_{\mathcal{H}_n}$.}
\end{definition}
Notice that the condition $\min G_{\mathcal{H}_n}  \leq 0 \leq \max G_{\mathcal{H}_n}$ can be written in two equivalent ways: $\min  G_{\mathcal{H}_n} \leq 0$, or  $\max  G_{\mathcal{H}_n} \geq 0$.  As shown by Definition \ref{COER-BET}, a probability assessment is coherent if and only if, in any finite combination of $n$ bets, it does not happen that the values $g_1, \ldots, g_m$ are all positive, or all negative ({\em no Dutch Book}). \\ \ \\
{\em Coherence with penalty criterion.}
An equivalent notion of coherence for unconditional events and random quantities was introduced by de Finetti (\cite{deFi62,deFi64,deFi70}) using the penalty criterion associated with the quadratic scoring rule. Such a  penalty criterion has been extended to the case of conditional events in \cite{Gili90}. With the pair $(\F_n,\P_n$) we associate the loss $ \mathcal{L} = \sum_{i=1}^nH_i(E_i - p_i)^2$; we denote by $L_h$ the value of $\mathcal{L}$ if $C_h$ is true. If you specify the assessment $\P_n$ on $\F_n$ as representing your belief's degrees, you are required to pay a penalty $L_h$ when $C_h$ is
true. Then, we have
\begin{definition}\label{COER-PENALTY}{\rm
The function $P$ defined on $\F$ is said to be {\em coherent} if and
only if there does not exist an integer $n$, a finite sub-family
$\F_n$ $ \subseteq \mathcal{\F}$, and an assessment ${\P_n}^*$ $ =
(p_1^*, \ldots, p_n^*)$ on $\F_n$ such that, for the loss $
\mathcal{L}^* = \sum_{i=1}^nH_i(E_i - p_i^*)^2,$ associated with
$(\F_n,\P_n^*)$, it results $\mathcal{L}^* \leq \mathcal{L}$ and
$\mathcal{L}^* \neq \mathcal{L}$; that is \(L_h^* \leq L_h \, , \;\;
h = 1, \ldots, m \, ,\) with $L_h^* < L_h$ in at least one case. }
\end{definition}
We can develop a geometrical approach to coherence by associating,
with each constituent $C_h$ contained in  $\mathcal{H}_n$, a point
$Q_h = (q_{h1}, \ldots, q_{hn})$, where $q_{hj} = 1$, or 0, or $p_j$, according to whether $C_h \subseteq E_jH_j$, or $C_h \subseteq E_j^cH_j$, or $C_h \subseteq H_j^c$.
Then, denoting by $\mathcal{I}$ the convex hull of $Q_1, \ldots, Q_m$,  the following characterization of coherence w.r.t. penalty criterion can be given (\cite[Theorem 4.4]{Gili90}, see also \cite{BiGS12,Gili92})
\begin{theorem}\label{CNES}{\rm
The function $P$ defined on $\F$ is coherent if and only if, for every finite sub-family $ \F_n
   \subseteq \mathcal{F}$, one has  $\P_n \in \I$. } \end{theorem}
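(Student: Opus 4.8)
The plan is to exploit the identification of the penalty $L_h$ with a squared Euclidean distance. First I would observe that, writing $Q_h=(q_{h1},\dots,q_{hn})$ for the point attached to $C_h$, the loss incurred when $C_h$ is true satisfies $L_h=\sum_{j=1}^n (p_j-q_{hj})^2=\|\P_n-Q_h\|^2$, because a void coordinate contributes $(p_j-p_j)^2=0$, a true one $(p_j-1)^2$, and a false one $(p_j-0)^2$, which are precisely the three possible values of the $j$-th summand of $\mathcal L$. More generally, for an arbitrary competing assessment $\P_n^*=(p_1^*,\dots,p_n^*)$, letting $J_h=\{j:C_h\not\subseteq H_j^c\}$ be the indices whose conditioning event is true on $C_h$, one has the identity $\|\P_n^*-Q_h\|^2=L_h^*+\sum_{j\notin J_h}(p_j^*-p_j)^2$, so that $L_h^*\le\|\P_n^*-Q_h\|^2$, with equality exactly when $\P_n^*$ agrees with $\P_n$ on every coordinate void on $C_h$. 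These two facts are the engine of the whole argument.

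For the implication ``coherent $\Rightarrow\P_n\in\I$'' I would argue by contraposition. Assume $\P_n\notin\I$; since $\I$ is the convex hull of finitely many points it is compact and convex, so the orthogonal projection $\P_n'$ of $\P_n$ onto $\I$ exists and is characterised by $(\P_n-\P_n')\cdot(Q_h-\P_n')\le 0$ for every $h$. Expanding $\|\P_n-Q_h\|^2=\|\P_n-\P_n'\|^2+2(\P_n-\P_n')\cdot(\P_n'-Q_h)+\|\P_n'-Q_h\|^2$ and using the projection inequality together with $\|\P_n-\P_n'\|^2>0$ gives $\|\P_n'-Q_h\|^2\le\|\P_n-Q_h\|^2-\|\P_n-\P_n'\|^2<L_h$ for all $h$. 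Taking $\P_n^*=\P_n'$ and combining with $L_h^*\le\|\P_n'-Q_h\|^2$ yields $L_h^*<L_h$ simultaneously for every constituent, i.e.\ a strictly dominating assessment, contradicting Definition \ref{COER-PENALTY}. (Note that $\P_n'\in\I\subseteq[0,1]^n$, so it is a legitimate assessment.)

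For the converse, ``$\P_n\in\I\Rightarrow$ coherent'', I would write $\P_n=\sum_h\lambda_h Q_h$ with $\lambda_h\ge 0$, $\sum_h\lambda_h=1$, and use the weighted parallel-axis decomposition $\sum_h\lambda_h\|x-Q_h\|^2=\|x-\P_n\|^2+\sum_h\lambda_h\|Q_h-\P_n\|^2$. Substituting the identity of the first paragraph and $L_h=\|\P_n-Q_h\|^2$, this collapses to $\sum_h\lambda_h L_h(x)=\sum_h\lambda_h L_h+\sum_j(1-\mu_j)(x_j-p_j)^2$, where $\mu_j=\sum_{h:\,C_h\subseteq H_j^c}\lambda_h\in[0,1]$. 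Hence $\sum_h\lambda_h L_h^*\ge\sum_h\lambda_h L_h$ for any $\P_n^*$; if $\P_n^*$ were to dominate ($L_h^*\le L_h$ for all $h$), each term would be forced to satisfy $L_h^*=L_h$ on the support $\{h:\lambda_h>0\}$, ruling out any strict improvement there.

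The main obstacle is precisely the constituents carrying weight $\lambda_h=0$: a dominating assessment could in principle improve strictly on such a constituent by altering a coordinate $j$ with $\mu_j=1$, that is, one whose conditioning event $H_j$ is switched off throughout the support of the representation. This is the familiar difficulty of zero-probability conditioning events in the coherence of conditional events. I would resolve it exactly as in de Finetti's fundamental theorem: the coordinates with $\mu_j=1$ form a strictly smaller subfamily on which the hypothesis $\P_n\in\I$ is again available, since the statement quantifies over every finite subfamily $\F_n\subseteq\F$; one then passes to this ``next layer'' and repeats the centroid argument. Because the subfamily shrinks at each step the induction terminates, forcing $L_h^*=L_h$ for all $h$ and contradicting strict domination. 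This layered induction, rather than the geometry, is the delicate part, and is the reason the result is attributed to \cite{Gili90}.
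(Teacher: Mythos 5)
Your argument is correct, but it takes a genuinely different route from the paper, which in fact contains no direct proof of Theorem \ref{CNES}: the result is recalled from \cite{Gili90}, and the surrounding text supports the geometry by \emph{linear} duality --- the condition $\P_n \in \I$ is identified with solvability of the system $(\Sigma)$, and Gale's theorem of the alternative (Theorem \ref{ALT1}) converts that into the betting-scheme condition $\max G_{\H_n} \geq 0$, so the characterization travels through the equivalence of Definitions \ref{COER-BET} and \ref{COER-PENALTY}. You instead stay entirely inside the penalty framework with \emph{quadratic} geometry: the identity $L_h = \|\P_n - Q_h\|^2$, the obtuse-angle property of the projection onto the compact convex set $\I$ for necessity (which in fact yields a uniform improvement of at least $\|\P_n - \P_n'\|^2$ on every constituent), and the weighted parallel-axis identity for sufficiency; your collapsed identity $\sum_h \lambda_h L_h(x) = \sum_h \lambda_h L_h + \sum_j (1-\mu_j)(x_j - p_j)^2$ checks out. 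Your diagnosis of the only delicate point --- constituents of zero weight, i.e.\ coordinates with $\mu_j = 1$, which are the penalty-side shadow of the set $I_0$ appearing in Theorems \ref{GILIO-93} and \ref{COER-P0} --- is exactly right, as is the fix: the hypothesis quantifies over \emph{every} finite subfamily, so $\P_J \in \I_J$ is available at each layer, and the layers strictly shrink. One step you should make explicit to close the induction: domination must be shown to \emph{transfer} to the layer subfamily $\F_J$, $J = \{j : \mu_j = 1\}$. This holds because the first layer already forces $p_j^* = p_j$ for every $j \notin J$, whence $\mathcal{L}^* - \mathcal{L} = \sum_{j \in J} H_j\bigl[(E_j - p_j^*)^2 - (E_j - p_j)^2\bigr]$, a quantity constant on each constituent of $\F_J$ and equal to $L_h^* - L_h \leq 0$ on any finer constituent of the full family contained in it (and every nonempty constituent of $\F_J$ inside $\H_J$ contains one); since the layers eventually empty out, you obtain $p_j^* = p_j$ for all $j$, hence $\mathcal{L}^* = \mathcal{L}$, contradicting $\mathcal{L}^* \neq \mathcal{L}$. (Your parenthetical that $\P_n' \in \I \subseteq [0,1]^n$ is not needed and not quite automatic if some $p_j \notin [0,1]$; Definition \ref{COER-PENALTY} places no cube constraint on $\P_n^*$ anyway.) With that transfer step stated, your proof is complete and self-contained; what it buys is independence from the theorem of alternatives, whereas the paper's linear route buys, in one stroke, the equivalence of the betting and penalty criteria.
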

\noindent{\em Equivalence between betting scheme and penalty criterion.}
The betting scheme and the penalty criterion are {\em
equivalent}, as can be proved by the following steps (\cite{Gili96}): \\
{\bf 1.} The condition $\P_n \in \I$ amounts to solvability of the
following system $(\Sigma)$ in the unknowns $\lambda_1, \ldots,
\lambda_m$
\[
(\Sigma) \hspace{1 cm}
\left\{
\begin{array}{l}
\sum_{h=1}^m q_{hj} \lambda_h = p_j \; , \; \; j = 1, \ldots, n \,
; \\[0.5ex]
\sum_{h=1}^m \lambda_h = 1 \; , \; \; \lambda_h \geq 0 \, ,
\; h = 1, \ldots, m.
\end{array}
\right.
\]
We say that system $(\Sigma)$ is  associated with the pair $(\mathcal{F}_n,\mathcal{P}_n)$.\\
{\bf 2.} Let ${\bf x} = (x_1, \ldots, x_m)$, ${\bf y} = (y_1, \ldots, y_n)^t$
and $A=(a_{ij})$ be, respectively, a row $m-$vector, a column
$n-$vector and a $m \times n-$matrix. The vector ${\bf x}$ is said
{\em semipositive} if $x_i \geq 0, \, \forall \, i, \;$ and $\;
x_1 + \cdots + x_m  >  0$. Then, we have (cf. \cite[Theorem 2.9]{Gale60})
\begin{theorem}\label{ALT1}{\rm
Exactly one of the following alternatives holds.
\\
(i) the equation ${\bf x} A = 0$ has a {\em semipositive} solution;
\\ (ii) the inequality $A {\bf y} > 0$ has a solution. }\end{theorem}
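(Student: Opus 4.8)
The plan is to recognize Theorem~\ref{ALT1} as a theorem of the alternative of Gordan type, and to prove it in two parts: first that the two alternatives cannot hold simultaneously, and then that at least one of them must hold. The second part carries all the mathematical content, and the natural way to reach it is a separation argument applied to the rows of $A$ regarded as points of $\mathbb{R}^n$.

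For the mutual exclusivity, suppose both (i) and (ii) held: take $\mathbf{x}$ semipositive with $\mathbf{x}A = 0$ and $\mathbf{y}$ with $A\mathbf{y} > 0$. Multiplying $\mathbf{x}A=0$ on the right by $\mathbf{y}$ gives $(\mathbf{x}A)\mathbf{y}=0$, while by associativity $(\mathbf{x}A)\mathbf{y}=\mathbf{x}(A\mathbf{y})=\sum_{i=1}^m x_i (A\mathbf{y})_i$. This last sum is nonnegative term by term (each $x_i\geq 0$, each $(A\mathbf{y})_i>0$) and strictly positive in the term where $x_i>0$, so $\mathbf{x}(A\mathbf{y})>0$, a contradiction. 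Hence (i) and (ii) are incompatible.

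For the existence of at least one alternative, let $a_1,\ldots,a_m\in\mathbb{R}^n$ be the rows of $A$ and let $C=\mathrm{conv}\{a_1,\ldots,a_m\}$ be their convex hull, a compact convex set. A semipositive solution of $\mathbf{x}A=0$, normalized so that $\sum_i x_i=1$, is exactly a representation of the origin as a convex combination of the $a_i$; thus (i) holds if and only if $0\in C$. Assume (i) fails, so $0\notin C$. Since $C$ is compact and convex and misses the origin, the separating hyperplane theorem furnishes a vector $\mathbf{y}$ and a scalar $\alpha>0$ with $\langle a_i,\mathbf{y}\rangle\geq\alpha>0$ for every $i$. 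In matrix form these inequalities read $A\mathbf{y}>0$, which is precisely (ii). Therefore, whenever (i) does not hold, (ii) does, and combined with the previous paragraph this shows exactly one alternative is satisfied.

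The main obstacle is concentrated in this last step: securing the \emph{strict} inequality $A\mathbf{y}>0$ in all $m$ components at once. This is what forces the use of strict separation rather than mere separation, and it relies on $C$ being closed — indeed compact, as the convex hull of finitely many points — so that the origin, lying outside $C$, can be separated from it with a positive gap. A topology-free alternative, the route taken in \cite{Gale60}, would instead induct on the number $n$ of columns and eliminate one variable at a time via Fourier--Motzkin elimination; this dispenses with the separation theorem but requires a more delicate bookkeeping of the elimination step.
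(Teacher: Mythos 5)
Your proof is correct, but note that the paper itself offers no proof of Theorem~\ref{ALT1} at all: it is imported verbatim from \cite[Theorem 2.9]{Gale60}, so the only comparison available is with Gale's treatment, which you yourself flag in your closing paragraph. Both halves of your argument check out. The exclusivity step is the standard one: $0=(\mathbf{x}A)\mathbf{y}=\mathbf{x}(A\mathbf{y})=\sum_i x_i(A\mathbf{y})_i>0$ is a genuine contradiction, since semipositivity gives $x_i\geq 0$ for all $i$ with some $x_{i_0}>0$, and $(A\mathbf{y})_i>0$ for all $i$. The reduction of alternative (i) to the condition $0\in C=\mathrm{conv}\{a_1,\ldots,a_m\}$ is exact (normalization by $\sum_i x_i>0$ is legitimate precisely because $\mathbf{x}$ is semipositive, and conversely any convex combination representing $0$ is a semipositive solution), and when $0\notin C$ the strict separation you invoke does deliver a uniform gap: taking $\mathbf{y}$ to be the nearest point $p\in C$ to the origin gives $\langle a_i,\mathbf{y}\rangle\geq \|p\|^2>0$ for every $i$, hence $A\mathbf{y}>0$ componentwise. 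You are also right that strictness is the crux and that it rests on $C$ being closed (here compact, as a polytope); this is worth emphasizing because with a merely convex, non-closed set the argument would only yield $\langle a_i,\mathbf{y}\rangle\geq 0$. The difference from the cited source is methodological: Gale's proof is algebraic and inductive, eliminating variables without any appeal to topology, which makes it portable to ordered fields beyond $\mathbb{R}$; your nearest-point separation argument is shorter and more geometric but uses completeness of $\mathbb{R}^n$. Either route is a legitimate proof of the statement, and since the paper delegates the proof entirely to the reference, yours fills a gap rather than duplicating anything in the text.
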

\noindent {\bf 3.} By choosing $a_{ij}=q_{ij}-p_j, \, \forall \, i,j$,
the solvability of ${\bf x} A = 0$ means that ${\P_n} \in \I$, while the
solvability of $A {\bf y} > 0$ means that, choosing $s_i = y_i, \,
\forall \, i$, one has $\min G_{\mathcal{H}_n} > 0$. Hence, by applying Theorem \ref{ALT1} with
$A=(q_{ij}-p_j)$, we obtain
$\max G_{\mathcal{H}_n}  \geq 0$ if and only if $(\Sigma)$ is solvable. In other words, $\max
G_{\mathcal{H}_n} \geq 0$ if and only if $\P_n \in \I$. Therefore, Definition \ref{COER-BET}  and Definition \ref{COER-PENALTY} are equivalent.
\subsection{Coherence Checking}
Given the assessment  $\mathcal{P}_n$ on $\mathcal{F}_n$, let $S$ be the set of solutions $\Lambda = (\lambda_1, \ldots,
\lambda_m)$ of the system $(\Sigma)$. Then, assuming $S \neq \emptyset$, define
\[\begin{array}{l}
\Phi_j(\Lambda) = \Phi_j(\lambda_1, \ldots, \lambda_m) = \sum_{r :
C_r \subseteq H_j} \lambda_r \; , \; \; \; j = 1, \ldots, n \,;\; \Lambda \in S \,;
\\
M_j  =  \max_{\Lambda \in S } \; \Phi_j(\Lambda) \; , \; \; \; j = 1, \ldots, n \,;
\;\;\; I_0  =  \{ j \, : \, M_j=0 \} \,.
\end{array}\]
We observe that, assuming $\P_n$ coherent, each solution $\Lambda=(\lambda_1, \ldots,
\lambda_m)$ of system $(\Sigma)$ is a coherent extension of the assessment $\mathcal{P}_n$ on $\mathcal{F}_n$ to the family $\{C_1|\H_n,\, \ldots,\,
C_m|\H_n\}$. Then, by the additive property, the quantity $\Phi_j(\Lambda)$ is the conditional probability $P(H_j|\H_n)$ and the quantity $M_j$ is the upper probability $P^*(H_j|\H_n)$ over all the solutions $\Lambda$ of system $(\Sigma)$. Of course, $j \in I_0$ if and only if $P^*(H_j|\H_n)=0$. Notice that $I_0 \subset \{1, \ldots, n\}$. We denote by $(\mathcal{F}_0, \mathcal{P}_0)$ the pair associated with $I_0$.
Given the pair $(\mathcal{F}_n,\mathcal{P}_n)$ and a subset $J \subset J_n=\{1,
\ldots, n\}$, we denote by $(\mathcal{F}_J, \mathcal{P}_J)$ the pair associated with
$J$ and by $\Sigma_J$ the corresponding system.
We observe that $(\Sigma_J)$ is solvable if and only if $\mathcal{P}_J  \in \mathcal{I}_J$,
where $\mathcal{I}_J$ is the convex hull associated with the pair $( \mathcal{F}_J,
\mathcal{P}_J)$. Then, we have  (\cite[Theorem 3.2]{Gili93}; see also \cite{BiGS03,Gili95})
\begin{theorem}\label{GILIO-93}{\rm
Given a probability assessment $\mathcal{P}_n$ on the family $\mathcal{F}_n$, if
the system $(\Sigma)$ associated with $(\mathcal{F}_n,\mathcal{P}_n)$ is solvable, then for every $J\subset \{1,\ldots,n\}$, such that $J\setminus I_0\neq \emptyset$, the system $(\Sigma_J)$ associated with $(\mathcal{F}_J,\mathcal{P}_J)$ is solvable too.}
\end{theorem}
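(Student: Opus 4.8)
The plan is to read each solution of $(\Sigma)$ as a genuine probability distribution over the constituents lying inside $\H_n$, and then to produce a solution of $(\Sigma_J)$ by \emph{conditioning} such a distribution on $\H_J=\bigvee_{j\in J}H_j$. The whole point of the hypothesis $J\setminus I_0\neq\emptyset$ will be to guarantee that this conditioning is legitimate, i.e.\ that some solution assigns positive mass to $\H_J$.

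First I would make the probabilistic translation of $(\Sigma)$. Given $\Lambda=(\lambda_1,\ldots,\lambda_m)\in S$, let $P$ be the distribution assigning mass $\lambda_h$ to $C_h$; since $\sum_{h}\lambda_h=1$ and $C_1,\ldots,C_m$ partition $\H_n$, we have $P(\H_n)=1$. Splitting $\sum_h q_{hj}\lambda_h$ according to whether $q_{hj}$ equals $1$, $0$ or $p_j$ (that is, $C_h\subseteq E_jH_j$, $C_h\subseteq E_j^cH_j$, or $C_h\subseteq H_j^c$), the $j$-th equation of $(\Sigma)$ becomes $P(E_jH_j)+p_jP(H_j^c)=p_j$, which, using $P(H_j^c)=1-P(H_j)$, is equivalent to
\[
P(E_jH_j)=p_jP(H_j)\,,\qquad j=1,\ldots,n\,.
\]
Note also that $\Phi_j(\Lambda)=\sum_{r:\,C_r\subseteq H_j}\lambda_r=P(H_j)$, so that $M_j$ is the largest value of $P(H_j)$ attained over all solutions.

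Next, since $J\setminus I_0\neq\emptyset$, I would fix $j^*\in J$ with $M_{j^*}>0$. As $S$ is a closed bounded polytope and $\Phi_{j^*}$ is linear, this maximum is attained at some $\Lambda^*\in S$, whose distribution $P$ satisfies $P(H_{j^*})=M_{j^*}>0$ and hence $P(\H_J)\geq P(H_{j^*})>0$. This positivity is the single delicate point of the argument: it is exactly what makes conditioning possible, and it explains why the hypothesis $J\setminus I_0\neq\emptyset$ cannot be dropped here, since when $J\subseteq I_0$ every solution of $(\Sigma)$ gives $P(\H_J)=0$.

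Finally I would set $P'(\cdot)=P(\cdot\mid\H_J)$, i.e.\ $P'(A)=P(A\H_J)/P(\H_J)$ for every event $A$; this is a probability distribution concentrated on $\H_J$. For each $j\in J$ we have $E_jH_j\subseteq H_j\subseteq\H_J$, whence $P'(E_jH_j)=P(E_jH_j)/P(\H_J)$ and $P'(H_j)=P(H_j)/P(\H_J)$; dividing the identity $P(E_jH_j)=p_jP(H_j)$ by $P(\H_J)$ then gives $P'(E_jH_j)=p_jP'(H_j)$ for every $j\in J$. Collecting the masses of $P'$ over the constituents of $\F_J$ contained in $\H_J$ therefore produces a non-negative vector summing to $1$ and satisfying all the equations of $(\Sigma_J)$; equivalently $\P_J\in\I_J$. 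By the equivalence recalled just before the statement, $(\Sigma_J)$ is solvable, as required. Once $P(\H_J)>0$ has been secured, the rest is only the routine check that conditioning preserves the defining equations.
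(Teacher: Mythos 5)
The paper itself gives no proof of this theorem: it is recalled from \cite[Theorem 3.2]{Gili93}, so there is no in-text argument to compare against, but your proof is correct and is essentially the standard one underlying the cited result. You correctly translate solvability of $(\Sigma)$ into the existence of a probability on the constituents with $P(\mathcal{H}_n)=1$ and $P(E_jH_j)=p_jP(H_j)$ for all $j$, use $J\setminus I_0\neq\emptyset$ (via some $j^*\in J$ with $M_{j^*}>0$, the maximum being attained on the compact polytope $S$) to secure a solution with $P(\mathcal{H}_J)>0$, and then condition on $\mathcal{H}_J$; the reverse translation for the subfamily $\mathcal{F}_J$ (aggregating the conditioned masses over the coarser constituents contained in $\mathcal{H}_J$, which are unions of constituents of $\mathcal{F}_n$) indeed yields a nonnegative solution of $(\Sigma_J)$, so every step checks out.
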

The previous result says that the condition $\P_n \in \I$ implies $\P_J \in \I_J$ when $J\setminus I_0\neq \emptyset$. We observe that, if $\P_n \in \I$, then for every nonempty subset $J$ of $J_n\setminus I_0$ it holds that $J\setminus I_0=J \neq \emptyset$; hence, by Theorem \ref{CNES}, the subassessment $\P_{J_n\setminus I_0}$
on the subfamily $\F_{J_n\setminus I_0}$ is coherent. In particular, when $I_0$ is empty, coherence of $\P_n$ amounts to solvability of system $(\Sigma)$, that is to condition $\P_n \in \I$. When $I_0$ is not empty, coherence of $\P_n$ amounts to the validity of both conditions $\P_n \in \I$ and $\P_0$ coherent, as shown by the result below (\cite[Theorem 3.3]{Gili93})
\begin{theorem}\label{COER-P0}{\rm The assessment $\mathcal{P}_n$ on $\mathcal{F}_n$ is coherent if and only if the following conditions are satisfied: (i)
$\mathcal{P}_n \in \mathcal{I}$; (ii) if $I_0 \neq \emptyset$, then $\mathcal{P}_0$ is coherent.
}\end{theorem}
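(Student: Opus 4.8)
The plan is to reduce coherence of $\mathcal{P}_n$, through Theorem~\ref{CNES}, to the collection of convex-hull conditions $\mathcal{P}_J \in \mathcal{I}_J$ ranging over all subfamilies $J \subseteq J_n$ (equivalently, to the solvability of each subsystem $(\Sigma_J)$), and then to classify these conditions according to whether $J$ meets $J_n \setminus I_0$ or is entirely contained in $I_0$. The two hypotheses (i) and (ii) will turn out to cover exactly these two classes.

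For necessity, I would first invoke Theorem~\ref{CNES} with the full subfamily $\mathcal{F}_n$ itself: coherence of $\mathcal{P}_n$ forces $\mathcal{P}_n \in \mathcal{I}$, which is condition (i). For (ii) I would appeal to the hereditary character of coherence. Reading the betting formulation of Definition~\ref{COER-BET}, the betting condition for any subfamily — in particular $\mathcal{F}_0$ — is obtained from that of $\mathcal{F}_n$ by setting $s_i = 0$ for all $i \notin I_0$; hence the restricted assessment $\mathcal{P}_0$ is itself coherent, giving condition (ii) whenever $I_0 \neq \emptyset$.

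For sufficiency, assume (i) and (ii). By Theorem~\ref{CNES} it suffices to check $\mathcal{P}_J \in \mathcal{I}_J$ for every $J \subseteq J_n$, and I would split into two cases. If $J \setminus I_0 \neq \emptyset$, then condition (i) makes $(\Sigma)$ solvable, so Theorem~\ref{GILIO-93} yields solvability of $(\Sigma_J)$, that is $\mathcal{P}_J \in \mathcal{I}_J$. If instead $J \setminus I_0 = \emptyset$ with $J$ nonempty, then $J \subseteq I_0$ (so in particular $I_0 \neq \emptyset$ and (ii) is available); now $\mathcal{F}_J$ is a subfamily of $\mathcal{F}_0$, and the coherence of $\mathcal{P}_0$ granted by (ii), read back through Theorem~\ref{CNES}, delivers $\mathcal{P}_J \in \mathcal{I}_J$. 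Since $\mathcal{I}_J$ depends only on the conditional events in $J$ and their assessed values, the membership is the same whether $\mathcal{F}_J$ is viewed inside $\mathcal{F}_n$ or inside $\mathcal{F}_0$, so no ambiguity arises. Having exhausted every $J$, coherence of $\mathcal{P}_n$ follows.

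The substantive step, carrying the real content, is the sufficiency argument when $J \setminus I_0 \neq \emptyset$: this is precisely where Theorem~\ref{GILIO-93} is indispensable, propagating solvability of the global system $(\Sigma)$ down to every subsystem not trapped inside $I_0$. The indices in $I_0$ are exactly the conditioning events whose upper probability $P^*(H_j|\mathcal{H}_n)$ vanishes, and these are the only ones for which $\mathcal{P}_J \in \mathcal{I}_J$ is not automatic from (i); condition (ii) is engineered to restore coherence precisely on that residual block. The remaining ingredients — heredity of coherence for necessity, and the exhaustiveness of the case split together with the intrinsic nature of $\mathcal{I}_J$ — are routine.
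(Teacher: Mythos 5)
The paper itself gives no proof of Theorem~\ref{COER-P0}: it is recalled from Gilio (1993), Theorem~3.3. Your overall route is correct and is exactly the natural argument from the machinery the paper assembles: Theorem~\ref{CNES} reduces coherence of $\mathcal{P}_n$ on the finite family $\mathcal{F}_n$ to the conditions $\mathcal{P}_J \in \mathcal{I}_J$ for every nonempty $J \subseteq J_n$; condition (i) settles $J = J_n$ directly and, via Theorem~\ref{GILIO-93}, every proper $J$ with $J \setminus I_0 \neq \emptyset$; condition (ii), read back through Theorem~\ref{CNES}, settles every nonempty $J \subseteq I_0$, and you correctly observe that $\mathcal{I}_J$ is intrinsic to the pair $(\mathcal{F}_J, \mathcal{P}_J)$, so no ambiguity arises between viewing $\mathcal{F}_J$ inside $\mathcal{F}_n$ or inside $\mathcal{F}_0$. (Note also that $I_0$ is only defined once $(\Sigma)$ is solvable, i.e.\ once (i) holds, which your ordering of the argument respects.)

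One step needs repair: in the necessity of (ii) you justify the coherence of $\mathcal{P}_0$ by ``setting $s_i = 0$ for all $i \notin I_0$'' in the betting condition for $\mathcal{F}_n$. As literally stated this derivation fails. With those stakes the random gain coincides with the subfamily gain, but the value set is still collected over all of $\mathcal{H}_n = H_1 \vee \cdots \vee H_n$, not over $\bigvee_{i \in I_0} H_i$; on any constituent of $\mathcal{H}_n$ on which every $H_i$ with $i \in I_0$ is false, the gain is $0$, and these spurious zeros can satisfy $\min G_{\mathcal{H}_n} \leq 0 \leq \max G_{\mathcal{H}_n}$ even when the gain restricted to $\bigvee_{i \in I_0} H_i$ has constant sign --- that is, even when $\mathcal{P}_0$ admits a Dutch book. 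This is precisely why coherence for conditional events restricts gains to the disjunction of the conditioning events \emph{of the subfamily under examination}. The fix is immediate and costless: Definition~\ref{COER-BET} already quantifies over every finite subfamily, so coherence of $\mathcal{P}_n$ contains, verbatim, the betting condition for $\mathcal{F}_0$ with gains restricted to $\bigvee_{i \in I_0} H_i$; equivalently, heredity follows from Theorem~\ref{CNES}, since the hull conditions for the subfamilies of $\mathcal{F}_0$ are among those for $\mathcal{F}_n$. With that substitution your proof is complete.
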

\subsection{Basic notions on probabilistic default reasoning}
Given a conditional knowledge base $\mathcal{K}_n = \{H_i
 \normally E_i \, , \; i=1,2,\ldots,n \}$, we denote by $\mathcal{F}_n = \{E_i|H_i \, , \; i=1,2,\ldots,n\}$ the associated
family of conditional events. We give below, in the setting of coherence, synthetic definitions of the notions of p-consistency and p-entailment of Adams, which are related with \cite[Theorem 4.5, Theorem 4.9]{BGLS02}, \cite[Theorem 5]{GiSa11b}, \cite[Theorem 6]{GiSa12}.
\begin{definition}\label{PC}
The knowledge base $\mathcal{K}_n = \{H_i \normally E_i \, , \; i=1,2,\ldots,n \}$ is {\em
\linebreak p-consistent} if and only if the assessment $(p_1,p_2,\ldots,p_n)=(1,1,\ldots,1)$ on $\mathcal{F}_n$ is coherent.
\end{definition}
\begin{definition}\label{PE}
A p-consistent knowledge  base  $\mathcal{K}_n = \{H_i \normally E_i
\, , \; i=1,\ldots,n \}$ {\em p-entails} the conditional $A
\normally B$, denoted $\mathcal{K}_n \; \Rightarrow_p \;A \normally
B$, if and only if, for every coherent assessment
$(p_1,p_2,\ldots,p_n,z)$ on $\mathcal{F}_n \cup \{B|A\}$ such that
$(p_1,p_2,\ldots,p_n)=(1,1,\ldots,1)$, it holds that $z=1$.
\end{definition}
The previous definitions of p-consistency and p-entailment are  equivalent (see \cite[Theorem 8]{Gili02}, \cite[Theorem 5]{GiSa11b}, \cite[Theorem 6]{GiSa12}) to that ones given in \cite{Gili02}.
\begin{remark}
\label{ENT-F}
We say that a family of conditional events $\mathcal{F}_n$ p-entails a conditional event $B|A$ when the associated knowledge  base  $\mathcal{K}_n$ p-entails the conditional $A |\hspace{-0.18cm}\sim B$.
\end{remark}
Definition \ref{PE} can be generalized to p-entailment of a family (of conditional events) $\Gamma$ from another family $\mathcal{F}$ in the following way.
\begin{definition}\label{ENTAIL-FAM}{\rm
Given two p-consistent finite families of conditional events
$\mathcal{F}$ and $\mathcal{S}$, we say that $\mathcal{F}$ p-entails
$\mathcal{S}$ if $\mathcal{F}$ p-entails $E|H$ for every $E|H \in
\mathcal{S}$. }\end{definition} We remark that, from Definition \ref{PE},
we trivially have that $\mathcal{F}$ p-entails $E|H$, for every $E|H
\in \mathcal{F}$; then, by Definition \ref{ENTAIL-FAM}, it
immediately follows
\begin{equation}\label{ENTAIL-SUB}
\mathcal{F} \;\Rightarrow_p \; \mathcal{S} \;,\;\; \forall \, \mathcal{S} \subseteq \mathcal{F} \,.
\end{equation}
Probabilistic default reasoning has been studied by many authors (see, e.g.,\cite{BeDP97,BGLS02,BGLS05,DuPr94,KrLM90}); methods and results based on the maximum entropy principle have been given in \cite{Kern01,TKIF11}.
\subsection{Hamacher t-norm and t-conorm}
We recall that the Hamacher t-norm, with parameter $\lambda=0$, or Hamacher product, $T_0^H$ is defined as (\cite{Hama78})
\begin{equation}\label{HAM-T0} T_0^H(x,y) =
\left\{\begin{array}{ll} 0, & (x,y)=(0,0),
\\ \frac{xy}{x+y-xy}, & (x,y)\neq (0,0).
\end{array}\right.
\end{equation}
We also recall that the Hamacher t-conorm, with parameter $\lambda=0$, $S_0^H$ is
\begin{equation}\label{HAM-S0} S_0^H(x,y) = \left\{\begin{array}{ll} 1, & (x,y)=(1,1),
\\ \frac{x+y-2xy}{1-xy}, & (x,y)\neq(1,1).
\end{array}\right.
\end{equation}
As is well known, t-norms overlap with copulas (\cite{AlFS06,Nels99}); indeed, commutative associative copulas are t-norms and t-norms which satisfy the 1-Lipschitz condition are copulas. We also recall that some well-known families of t-norms receive a different name in the literature when considered as families of copulas. In particular,  the Hamacher product is a copula because it satisfies the following necessary and sufficient condition (\cite[Theorem 1.4.5]{AlFS06}):
\begin{theorem}
A t-norm T is a copula if and only if it satisfies the Lipschitz condition:
$ T(x_2,y)-T(x_1,y)\leq x_2-x_1$, whenever  $x_2\leq x_1$.
\end{theorem}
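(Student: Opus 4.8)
The plan is to first reduce the claim to a statement purely about the 2-increasing property, and then treat the two implications separately. Recall that a copula $C$ on $[0,1]^2$ is characterized by three requirements: the groundedness conditions $C(x,0)=C(0,y)=0$, the uniform margins $C(x,1)=x$, $C(1,y)=y$, and the 2-increasing (rectangle) inequality
\[
C(x_2,y_2)-C(x_2,y_1)-C(x_1,y_2)+C(x_1,y_1)\ge 0,\qquad x_1\le x_2,\ y_1\le y_2.
\]
Now if $T$ is a t-norm, then commutativity together with the neutral element $T(x,1)=x$ gives $T(1,y)=y$; in particular $T(1,0)=0$, so monotonicity yields $T(x,0)\le T(1,0)=0$, i.e.\ $T(x,0)=0$, and by commutativity $T(0,y)=0$. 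Hence \emph{every} t-norm already satisfies the boundary and margin conditions of a copula, so that ``$T$ is a copula'' is equivalent to ``$T$ is 2-increasing.'' The theorem therefore amounts to the equivalence, for t-norms, between the 2-increasing property and the Lipschitz condition.

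For the direction ``copula $\Rightarrow$ Lipschitz'' I would argue directly, using nothing special about t-norms. Fix $x_1\le x_2$ and $y$, and apply the 2-increasing inequality to the rectangle $[x_1,x_2]\times[y,1]$:
\[
T(x_2,1)-T(x_2,y)-T(x_1,1)+T(x_1,y)\ge 0.
\]
Using the margins $T(x_i,1)=x_i$, this rearranges to $T(x_2,y)-T(x_1,y)\le x_2-x_1$, which is exactly the Lipschitz condition, while monotonicity of $T$ supplies the complementary bound $T(x_2,y)-T(x_1,y)\ge 0$. Thus every copula, and in particular every t-norm that is a copula, is $1$-Lipschitz in each variable.

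The substantial direction is ``Lipschitz $\Rightarrow$ 2-increasing,'' and here the algebraic structure is essential: a merely monotone, grounded function with uniform margins that is $1$-Lipschitz need not be 2-increasing, so associativity (and commutativity) must be brought into play. The route I would take is via the generator representation. By commutativity the Lipschitz bound holds in both arguments; for a continuous Archimedean $T$ one writes $T(x,y)=f^{(-1)}(f(x)+f(y))$ with an additive generator $f$, and then shows that the Lipschitz condition is equivalent to convexity of $f$, while the 2-increasing inequality is \emph{also} equivalent to convexity of $f$ (the classical characterization of Archimedean copulas). The two properties therefore coincide. The general continuous case is recovered from the ordinal-sum decomposition, since on each Archimedean summand the argument above applies and on the $\min$-blocks both properties hold automatically.

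I expect the main obstacle to be precisely this equivalence of Lipschitz-ness and 2-increasingness through convexity of the generator, that is, converting the one-dimensional slope bound $T(x_2,y)-T(x_1,y)\le x_2-x_1$ into the two-dimensional positivity-of-volume statement. This is the technical heart of the cited result \cite[Theorem 1.4.5]{AlFS06}: the whole point is that associativity is what bridges a one-dimensional Lipschitz estimate and the genuinely two-dimensional rectangle inequality, so that for t-norms (unlike for arbitrary grounded monotone functions) the two conditions become equivalent.
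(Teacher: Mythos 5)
You should first be aware that the paper itself contains no proof of this statement: it is recalled from the literature, with the proof delegated entirely to \cite[Theorem 1.4.5]{AlFS06} (note also that the hypothesis ``whenever $x_2\leq x_1$'' in the paper is a typo for $x_1\leq x_2$, which you silently and correctly fix). So the comparison is with the standard textbook proof, and your sketch follows essentially that route. The reduction of ``copula'' to the 2-increasing inequality is correct, since every t-norm is grounded with uniform margins, exactly as you argue; the direction copula $\Rightarrow$ Lipschitz via the rectangle $[x_1,x_2]\times[y,1]$ is complete as written; and your remark that grounded, monotone, 1-Lipschitz functions with uniform margins need not be 2-increasing is correct (these are precisely the quasi-copulas, and proper quasi-copulas exist), which rightly identifies associativity as the essential ingredient in the converse.

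Two steps in the converse should be made explicit rather than left implicit. First, you never state that the hypothesis yields continuity: the 1-Lipschitz bound in one variable, transported to the other by commutativity, gives joint continuity of $T$, and this is what licenses the appeal to the Mostert--Shields/Ling ordinal-sum representation of continuous t-norms and, on Archimedean summands, to additive generators. Second, the ordinal-sum step requires more than checking each Archimedean summand and the $\min$-region separately: a rectangle straddling two summands, or a summand and the idempotent region, must also satisfy the 2-increasing inequality, so you need the standard (non-vacuous) lemma that an ordinal sum of copulas is again a copula, together with the observation that restricting $T$ to a summand and rescaling affinely preserves the 1-Lipschitz property, so that Moynihan's equivalence (1-Lipschitz Archimedean $\Leftrightarrow$ convex additive generator $\Leftrightarrow$ copula) applies summand by summand. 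With these two points spelled out, your argument is a correct proof and coincides in substance with the one the paper cites.
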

Hamacher product is called Ali-Mikhail-Haq copula with parameter 0 (\cite{AlMH78,GMMP09,KlMP00,Nels99}).
Further details on  t-norms and t-conorms are given in the  Appendices.
\section{Lower and Upper Bounds for Quasi Conjunction}
We recall below the notion of quasi conjunction of conditional events  as defined in \cite{Adam75}.
\begin{definition}\label{QC}{\rm
Given any events $A, H$, $B, K$, with $H \neq
\emptyset, K \neq \emptyset$, the quasi conjunction of the
conditional events $A|H$ and $B|K$ is
the conditional event $\mathcal{C}(A|H,B|K) = (AH \vee H^c) \wedge (BK \vee K^c)|(H \vee K)$, or equivalently $\mathcal{C}(A|H,B|K) = (AHBK \vee AHK^c \vee H^cBK)|(H \vee K)$.
}\end{definition}
Table \ref{TABLE-QC} shows the truth-table of the quasi conjunction $\C(A|H,B|K)$ and of the
quasi disjunction $\D(A|H,B|K)$ (see Section \ref{SEC-QD}).
In general, given a family of $n$ conditional events $\mathcal{F}_n=\{E_i|H_i,\, i=1,\ldots,n\}$, we have
\[
\mathcal{C}(\mathcal{F}_n)=\mathcal{C}(E_1|H_1,\ldots,E_n|H_n) = \bigwedge_{i=1}^n (E_iH_i\vee H_i^c)\big |(\bigvee_{i=1}^n H_i) \,.
\]
Quasi conjunction is associative; that is, for every subset $J \subset \{1,\ldots,n\}$, it holds that
$\mathcal{C}(\mathcal{F}_n) = \mathcal{C}(\mathcal{F}_J \cup \mathcal{F}_{\Gamma}) = \mathcal{C}[\mathcal{C}(\mathcal{F}_J), \mathcal{C}(\mathcal{F}_{\Gamma})]$, where $\Gamma = \{1,\ldots,n\} \setminus J$.
An interesting analysis of many three-valued logics studied in the literature has been given by Ciucci and Dubois in \cite{CiDu12}. In such a paper the definition of conjunction satisfies left monotonicity, right monotonicity and conformity with Boolean logic; then the authors show that there are 14 different ways of defining conjunction and only 5 of them  (one of which defines quasi conjunction) satisfy commutativity and associativity.\\
 Assuming $A, H, B, K$ logically independent, we have
(\cite{Gili04}, see also \cite{Gili12}): \\ (i) the probability assessment $(x,y)$ on $\{A|H, B|K\}$ is coherent for every $(x,y) \in [0,1]^2$; \\ (ii) given a coherent
assessment $(x,y)$ on $\{A|H, B|K\}$, the extension $\mathcal{P} = (x,y,z)$ on
$\mathcal{F} = \{A|H, B|K, \mathcal{C}(A|H,B|K)\}$, with $z = P[\mathcal{C}(A|H,B|K)]$, is
coherent if and only if $z\in[l,u]$, with
\begin{equation} \label{EQ-LOW-UP-QC}
l = T_L(x,y) = \mbox{max}(x+y-1,0),\;\;  u = S_0^H(x,y) = \left\{\begin{array}{ll} \frac{x+y-2xy}{1-xy}, & (x,y) \neq
(1,1), \\ 1, & (x,y)=(1,1),
\end{array} \right.
\end{equation}
where $T_L$ is the Lukasiewicz t-norm (see \ref{T-NORM}) and $S_0^H$ is the Hamacher t-conorm\footnote{\noindent The coincidence between the upper bound $u$ and Hamacher t-conorm $S_0^H(x,y)$ was noticed by Didier Dubois.}, with parameter $\lambda=0$. The lower bound $T_L$ for the quasi conjunction is the Fr\'echet-Hoeffding lower bound; both $l$ and $u$ coincide with the Fr\'echet-Hoeffding bounds if we consider the {\em conjunction} of conditional events in the setting of conditional random quantities, as made in \cite{GiSa13a}.
The lower and upper bounds, $l,u$, of $z=P[\mathcal{C}(A|H,B|K)]$ can be obtained by studying the coherence of the assessment $\P=(x,y,z)$, based on the geometrical approach described in Section \ref{PRELIM}.
The constituents generated  by the family
$\{A|H, B|K, \C(A|H,B|K)\}$ and the corresponding points $Q_h$'s
are given in columns 2 and 6  of Table \ref{TABLE-QC}.
In \cite{Gili04} (see also \cite{Gili12})  the values $l,u$ are computed by observing that the coherence of $\P=(x,y,z)$ simply amounts to the geometrical condition $\P\in \I$, where $\I$ is the convex hull of the points $Q_1,Q_2,\ldots,Q_8$ (associated with the constituents $C_1,C_2,\ldots,C_8$ contained in $H\vee K$). We observe that in this case the convex hull $\I$ does not depend on $z$.
Figure~\ref{fig1} shows, for  given values $x,y$, the convex hull $\I$  and the associated interval $[l,u]$ for $z=P[\C(A|H,B|K)]$.
\begin{table}[!ht]
\center
\[
\begin{array}{cc}
\begin{small}
\begin{array}{l|llccccc}
h & C_h       & A|H       &  B|K   & \mathcal{C}(A|H,B|K)&Q_h   & \mathcal{D}(A|H,B|K)    &Q_h   \\ \hline
0&H^cK^c      &    \Void  &  \Void & \Void  & (x,y,z)  &\Void    &   (x,y,z)\\
1&AHBK        &    \True  &  \True & \True  & (1,1,1)  & \True   &   (1,1,1)     \\
2&AHK^c       &    \True  &  \Void & \True  &  (1,y,1) & \True   &   (1,y,1)      \\
3&AHB^cK      &    \True  &  \False& \False & (1,0,0)  & \True   &   (1,0,1)     \\
4&H^cBK       &    \Void  &  \True & \True  &  (x,1,1) & \True   &   (x,1,1)      \\
5&H^cB^cK     &    \Void  &  \False& \False & (x,0,0)  & \False  &   (x,0,0)    \\
6&A^cHBK      &    \False &  \True & \False & (0,1,0)  & \True   &   (0,1,1)     \\
7&A^cHK^c     &    \False &  \Void & \False & (0,y,0)  & \False  &   (0,y,0)     \\
8& A^cHB^cK   &    \False &  \False& \False & (0,0,0)  & \False  &   (0,0,0)
  \end{array} 
\end{small}
\end{array}
\]
\caption{Truth-Table of the quasi conjunction and of the quasi disjunction with the associated points $Q_h$'s. }
\label{TABLE-QC}
\end{table}
\begin{figure}[!ht]
\begin{center}
\includegraphics[ scale=\myscale]{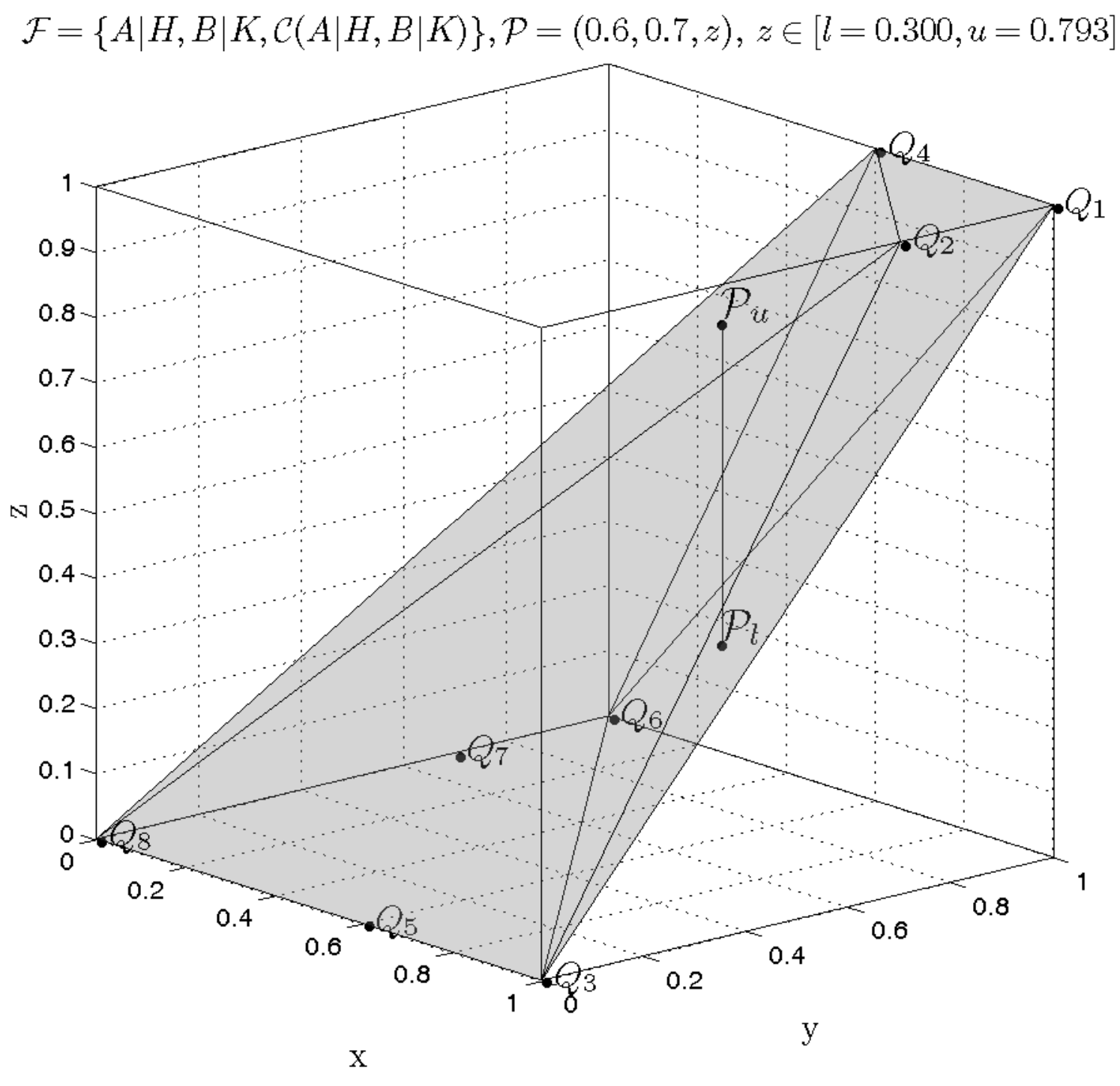}
\caption{The convex hull $\I$  associated with the pair $(\F,\P)$ in case of quasi conjunction without logical dependencies.
The  interval $[l,u]$ for $z=P[\C(A|H,B|K)]$ is the range of the third coordinate $z$  of each $\P \in
     \overline{\P_l\P_u}=\{(x,y,z):z\in[T_L(x,y),S_{0}^{H}(x,y)]\}$. The segment $\overline{\P_l\P_u}$  is the intersection  between the segment $\{(x,y,z):z\in [0,1]\}$  and the convex hull $\I$.}
\label{fig1}
\end{center}
\end{figure}
\begin{remark}\label{REM-SC}
Notice that, if the events $A, B, H, K$ were not logically independent, then some constituents $C_h$'s (at least one) would become impossible and the lower bound $l$ could increase, while the upper bound $u$ could decrease. To examine this aspect we will consider some special cases of logical dependencies.
\end{remark}
\subsection{The Case $A|H\subseteq B|K$}\label{INCLU}
The notion of logical inclusion among events has been generalized to conditional events by Goodman and Nguyen in \cite{GoNg88}. We recall below this generalized notion.
\begin{definition}{\rm Given two conditional events $A|H$ and $B|K$, we say that $A|H$ implies $B|K$, denoted by $A|H \subseteq B|K$, iff $AH$ {\em true} implies $BK$ {\em true} and $B^cK$ {\em true} implies $A^cH$ {\em true}; i.e., iff $AH \subseteq BK \; \mbox{and} \; B^cK \subseteq A^cH$.
}\end{definition}
\begin{remark}\label{REM-GN}
{\rm
Denoting by $t(\cdot)$ the truth value function and assuming the order $False<Void <True$, then  it can be easily verified that
\[
\begin{array}{l}
A|H \subseteq B|K \Leftrightarrow AHB^cK=H^cB^cK=AHK^c=\emptyset\,, \\
A|H \subseteq B|K\Leftrightarrow t(A|H)\leq t(B|K) \Leftrightarrow  t(B^c|K)\leq t(A^c|H) \Leftrightarrow B^c|K \subseteq A^c|H\,.
\end{array}
\]
}\end{remark}
Given any conditional events $A|H,B|K$, we denote by  $\Pi_x$  the set of coherent probability assessments $x$ on $A|H$,  by  $\Pi_y$  the set of coherent probability assessments $y$ on $B|K$ and by
$\Pi$  the set of coherent probability assessments $(x,y)$ on $\{A|H, B|K \}$; moreover we indicate by $T_{x\leq y}$  the triangle $\{(x,y) \in [0,1]^2 : x \leq y\}$. We have
\begin{theorem}\label{COER-GN}
{\rm
Given two conditional events $A|H,B|K$, we have
\begin{equation}\label{EQ-COER-GN}
\Pi \subseteq T_{x\leq y} \; \Longleftrightarrow \; A|H \subseteq B|K, \; \mbox{or} \; AH=\emptyset, \; \mbox{or} \; B^cK=\emptyset \,.
\end{equation}
}\end{theorem}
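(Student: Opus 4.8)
The plan is to prove the claimed equivalence (\ref{EQ-COER-GN}) by treating the two implications separately, throughout exploiting the list of constituents $C_h$ with their associated points $Q_h$ recorded in Table \ref{TABLE-QC}, the geometric coherence criterion of Theorem \ref{CNES} ($(x,y)$ coherent $\Leftrightarrow (x,y)\in\I$ together with coherence on every subfamily), and the purely logical reformulation of inclusion in Remark \ref{REM-GN}. The key preliminary observation is that, by Remark \ref{REM-GN}, the relation $A|H\subseteq B|K$ holds exactly when the three constituents $C_2=AHK^c$, $C_3=AHB^cK$ and $C_5=H^cB^cK$ are all impossible; these are precisely the constituents inside $H\vee K$ on which $t(A|H)>t(B|K)$. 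Hence, under inclusion, only $C_1,C_4,C_6,C_7,C_8$ can survive.

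For the implication $\Leftarrow$ I would first dispose of the two degenerate alternatives. If $AH=\emptyset$ then $A|H$ can never be true, the only coherent value of $x$ is $0$, and $x=0\le y$; dually, if $B^cK=\emptyset$ then $B|K$ can never be false, forcing $y=1\ge x$. If neither degeneracy holds, the standing hypothesis forces $A|H\subseteq B|K$, so $C_2,C_3,C_5$ are impossible, and I argue by contradiction with the betting criterion (Definition \ref{COER-BET}): assuming a coherent $(x,y)$ with $x>y$, I evaluate the random gain $\mathcal{G}=H(A-x)-K(B-y)$ (that is, $s_1=1$, $s_2=-1$) on each surviving constituent, obtaining the values $y-x$ on $C_1$ and $C_8$, $y-1$ on $C_4$, $y-x-1$ on $C_6$, and $-x$ on $C_7$. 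Since $x>y$ forces $y<1$ and $x>0$, all of these are strictly negative, so the maximum of $\mathcal{G}$ over $H\vee K$ is negative, contradicting coherence; thus $x\le y$ and $\Pi\subseteq T_{x\le y}$.

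For the implication $\Rightarrow$ I would argue by contraposition: assume $AH\neq\emptyset$, $B^cK\neq\emptyset$ and $A|H\not\subseteq B|K$, and exhibit a coherent $(x,y)$ with $x>y$. By Remark \ref{REM-GN} at least one of $C_2,C_3,C_5$ is possible. Evaluating the corresponding points at the candidate assessment $(x,y)=(1,0)$ gives $Q_2=(1,y)=(1,0)$, $Q_3=(1,0)$, and $Q_5=(x,0)=(1,0)$, so whichever of these constituents survives contributes exactly the point $(1,0)$, whence $(1,0)\in\I$. Coherence of the full pair then follows from Theorem \ref{CNES} once the two singleton subassessments are verified: $x=1$ is coherent on $\{A|H\}$ because $AH\neq\emptyset$, and $y=0$ is coherent on $\{B|K\}$ because $B^cK\neq\emptyset$. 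Hence $(1,0)\in\Pi$ while $(1,0)\notin T_{x\le y}$, the desired contradiction.

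The delicate point, and the reason the two extra alternatives $AH=\emptyset$ and $B^cK=\emptyset$ appear in (\ref{EQ-COER-GN}), is that the points $Q_h$ attached to the void constituents $C_4,C_5,C_7$ depend on the unknown assessment itself, so $(x,y)\in\I$ cannot be checked against a fixed polytope. My argument circumvents this in the $\Leftarrow$ direction by working with the gain instead of the hull, and in the $\Rightarrow$ direction by selecting the specific assessment $(1,0)$, at which the relevant void points all collapse onto $(1,0)$; the hypotheses $AH\neq\emptyset$ and $B^cK\neq\emptyset$ are exactly what guarantees coherence of the one–dimensional subassessments, without which $(1,0)$ could lie in $\I$ yet fail to be coherent.
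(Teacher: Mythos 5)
Your proof is correct, and it diverges from the paper's in one substantive way. In the ($\Rightarrow$) direction you take essentially the paper's route: the paper also argues by contraposition, assumes $AH\neq\emptyset$, $B^cK\neq\emptyset$, $A|H\nsubseteq B|K$, and shows the assessment $(1,0)$ is coherent; it splits into three cases according to which of $AHB^cK$, $H^cB^cK$ (with $AH\neq\emptyset$), $AHK^c$ (with $B^cK\neq\emptyset$) is nonempty, whereas you unify these by noting that at the candidate assessment $(1,0)$ the points $Q_2=(1,y)$, $Q_3=(1,0)$, $Q_5=(x,0)$ all collapse to $(1,0)$, and then you verify the singleton subassessments explicitly and invoke Theorem \ref{CNES} on all three subfamilies --- a verification the paper leaves largely implicit, so your version is if anything more complete. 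The genuine difference is in the ($\Leftarrow$) direction for the main case $A|H\subseteq B|K$: the paper stays with the primal geometric criterion, observing that the surviving constituents yield points $Q_h$ in $\{(1,1),(x,1),(0,1),(0,y),(0,0)\}$, all of which lie in $T_{x\leq y}$, so that the convex hull $\mathcal{I}$, and hence any coherent $(x,y)\in\mathcal{I}$, is contained in $T_{x\leq y}$. You instead work on the dual side, exhibiting against any putative coherent $(x,y)$ with $x>y$ the explicit Dutch book $\mathcal{G}=H(A-x)-K(B-y)$; your gain values $y-x$, $y-1$, $y-x-1$, $-x$, $y-x$ on $C_1,C_4,C_6,C_7,C_8$ are all strictly negative (since $x>y$ forces $0\leq y<x\leq 1$, hence $y<1$ and $x>0$), contradicting Definition \ref{COER-BET}. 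The two arguments are dual to one another via Theorem \ref{ALT1}; yours is more computational but self-contained, while the paper's is shorter once Theorem \ref{CNES} is in hand.

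One small caveat on your closing methodological remark: the dependence of the void points $(x,1)$ and $(0,y)$ on the unknown assessment is less of an obstacle than you suggest, since these points lie in $T_{x\leq y}$ for \emph{every} $(x,y)\in[0,1]^2$, which is precisely why the paper's containment argument goes through without circularity. So your betting-scheme detour is a legitimate alternative rather than a necessity; where the assessment-dependence genuinely matters is in the ($\Rightarrow$) direction, and there your choice of the specific point $(1,0)$, together with the hypotheses $AH\neq\emptyset$ and $B^cK\neq\emptyset$ securing the one-dimensional subassessments, handles it exactly as the paper's case analysis does.
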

\begin{proof}
($\Rightarrow$) We will prove that
\begin{equation}\label{NEG-EQ-COER-GN}
A|H \nsubseteq B|K \,,\; AH \neq \emptyset \,,\; B^cK \neq \emptyset \; \Longrightarrow \; \Pi \nsubseteq T_{x\leq y} \,.
\end{equation}
We observe that $AH = \emptyset$ if and only if $\Pi_x = \{0\}$ and that $B^cK = \emptyset$ if and only if $\Pi_y = \{1\}$. Moreover, by Remark \ref{REM-GN} it holds
\[
A|H \nsubseteq B|K \; \Longleftrightarrow \; AHB^cK \vee H^cB^cK \vee AHK^c \neq \emptyset \,.
\]
Then, in order to prove formula (\ref{NEG-EQ-COER-GN}), we distinguish three cases: \\
(i) $AHB^cK \neq \emptyset$;  (ii) $H^cB^cK \neq \emptyset \,,\; AH \neq \emptyset$;  (iii) $AHK^c \neq \emptyset \,,\; B^cK \neq \emptyset$. \\
In case (i), the assessment $(1,0)$ on $\{A|H, B|K\}$ is coherent. In case (ii), as $AH \neq \emptyset$ we have $\{1\} \subseteq \Pi_x$; then, the assessment $(1,0)$ on $\{A|H, B|K\}$ is coherent. In case (iii), as $B^cK \neq \emptyset$ we have $\{0\} \subseteq \Pi_y$; then, the assessment $(1,0)$ on $\{A|H, B|K\}$ is coherent. Then, in each of the three cases the assessment $(1,0)$ is coherent and hence $\Pi \nsubseteq T_{x\leq y}$. \\
($\Leftarrow$)
We distinguish three cases: \\ $(a)$ $A|H \subseteq B|K$;  $(b)$ $AH=\emptyset$; $(c)$ $B^cK=\emptyset$.\\
$(a)$ The constituents generated by $\{A|H, B|K\}$ and contained in $H\vee K$ belong to the family:
\[\{AHBK, H^cBK, A^cHBK, A^cHK^c, A^cHB^cK\}\,.\]
The corresponding points $Q_h$'s belong to the set
$\{(1,1), (x,1), (0,1), (0,y), (0,0)\}$, which has the triangle $T_{x\leq y}$ as convex hull; hence the convex hull $\Pi$ of the points $Q_h$'s is a subset of $T_{x\leq y}$.
 \\
$(b)$ Since $\Pi_x=\{0\}$ it follows that   $\Pi\subseteq \{(0,y): y\in [0,1]\}\subseteq T_{x\leq y}$.
\\
$(c)$ Since $\Pi_y=\{1\}$ it follows that   $\Pi\subseteq \{(x,1): x\in [0,1]\}\subseteq T_{x\leq y}$.
\end{proof}
The next result, related with Theorem \ref{COER-GN} and with the inclusion relation,  characterizes the notion of p-entailment between two conditional events.
\begin{theorem}\label{THM-ENTAIL-COER-GN}
Given two conditional events $A|H, B|K$, with $AH\neq \emptyset$, the following assertions are equivalent: \\
(a) $(A|H \;\Rightarrow_p \; B|K )\;$;
(b) $A|H \;\subseteq \, B|K$, or $K\subseteq B \;$;
(c) $\Pi\subseteq T_{x\leq y}$.
\end{theorem}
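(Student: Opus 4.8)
The plan is to lean on Theorem~\ref{COER-GN}, which almost immediately disposes of the equivalence $(b)\Leftrightarrow(c)$, and then to prove $(a)\Leftrightarrow(c)$ directly from the definition of p-entailment. First I would observe that the logical condition $B^cK=\emptyset$ is the same as $K\subseteq B$, and that the standing hypothesis $AH\neq\emptyset$ rules out the middle alternative $AH=\emptyset$ in formula (\ref{EQ-COER-GN}). Under this hypothesis Theorem~\ref{COER-GN} therefore reads $\Pi\subseteq T_{x\leq y}\Leftrightarrow(A|H\subseteq B|K \text{ or } K\subseteq B)$, which is exactly $(c)\Leftrightarrow(b)$. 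Thus the whole statement reduces to establishing $(a)\Leftrightarrow(c)$.

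For $(c)\Rightarrow(a)$ I would first note that $AH\neq\emptyset$ forces $1\in\Pi_x$, so that the assessment $P(A|H)=1$ is coherent and the knowledge base $\{H\normally A\}$ is p-consistent; hence p-entailment is meaningful in the sense of Definition~\ref{PE}. Now take any coherent extension $(x,z)$ on $\{A|H,B|K\}$ with $x=1$. By hypothesis $(c)$ we have $(1,z)\in\Pi\subseteq T_{x\leq y}$, so $1\leq z$, forcing $z=1$. By Definition~\ref{PE} this is precisely $A|H\;\Rightarrow_p\;B|K$, i.e.\ assertion $(a)$.

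For $(a)\Rightarrow(c)$ I would argue by contraposition. Suppose $\Pi\nsubseteq T_{x\leq y}$; then, since $AH\neq\emptyset$, the negation of (\ref{EQ-COER-GN}) gives $A|H\nsubseteq B|K$ together with $B^cK\neq\emptyset$. This is exactly the hypothesis of implication (\ref{NEG-EQ-COER-GN}), whose proof already exhibits the assessment $(1,0)$ on $\{A|H,B|K\}$ as coherent. But a coherent assessment with $x=1$ and $z=0\neq1$ contradicts p-entailment, so $(a)$ must fail. Combining the two implications yields $(a)\Leftrightarrow(c)$, and together with $(b)\Leftrightarrow(c)$ all three assertions are equivalent.

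I do not expect a genuine obstacle here, since the hard combinatorial work is inherited from Theorem~\ref{COER-GN}. The only point requiring care is the faithful translation between the probabilistic statement of p-entailment in Definition~\ref{PE} (that coherence together with $P(A|H)=1$ must force $P(B|K)=1$) and the geometric containment $(c)$, together with the reuse of the witness assessment $(1,0)$ constructed inside the proof of Theorem~\ref{COER-GN}; the role of the hypothesis $AH\neq\emptyset$ is precisely to secure both p-consistency of the premise and the exclusion of the degenerate alternative in (\ref{EQ-COER-GN}).
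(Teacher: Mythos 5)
Your proposal is correct, but it follows a slightly different decomposition than the paper's proof. Both arguments dispose of $(b)\Leftrightarrow(c)$ identically, by invoking Theorem~\ref{COER-GN} under the hypothesis $AH\neq\emptyset$, and both prove the ``hard'' direction by the same contrapositive: negating the containment yields, via (\ref{EQ-COER-GN}), exactly the hypothesis of (\ref{NEG-EQ-COER-GN}), and the witness assessment $(1,0)$ constructed in the proof of Theorem~\ref{COER-GN} refutes p-entailment (the paper phrases this as $(a)\Rightarrow(b)$, you as $(a)\Rightarrow(c)$, but the content is the same). Where you genuinely diverge is in the converse: the paper proves $(b)\Rightarrow(a)$ by splitting into the two logical cases of $(b)$ --- if $A|H\subseteq B|K$ it appeals to the monotonicity of conditional probability with respect to the Goodman--Nguyen inclusion relation, and if $K\subseteq B$ the conclusion $B|K=\Omega|K$ makes p-entailment trivial --- whereas you prove $(c)\Rightarrow(a)$ in one stroke: since $\Pi$ is by definition the set of coherent assessments on $\{A|H,B|K\}$, any coherent extension $(1,z)$ lies in $\Pi\subseteq T_{x\leq y}$, forcing $z=1$, which is verbatim Definition~\ref{PE}. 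Your route is more self-contained (it avoids citing the external monotonicity property and needs no case split), at the cost of being less explanatory about why each logical alternative in $(b)$ separately yields p-entailment; you also correctly handle the two points the definition quietly requires, namely p-consistency of $\{A|H\}$ (secured by $AH\neq\emptyset$, which puts $1$ in $\Pi_x$) and the exclusion of the degenerate alternative $AH=\emptyset$ in (\ref{EQ-COER-GN}).
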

\begin{proof}
As $AH \neq \emptyset$, from Theorem \ref{COER-GN} the assertions (b) and (c) are equivalent; hence, we only need to show the equivalence between (a) and (b). \\
((a) $\Rightarrow$ (b)).
We will prove that
\[
A|H \nsubseteq B|K \,,\; AH \neq \emptyset \,,\; B^cK \neq \emptyset \; \; \Longrightarrow \; A|H \;\nRightarrow_p \; B|K \,.
\]
Assume that $A|H \nsubseteq B|K, B^cK \neq \emptyset$. Then, as in the proof of Theorem \ref{COER-GN}, we distinguish three cases: \\
(i) $AHB^cK \neq \emptyset$;  (ii) $H^cB^cK \neq \emptyset \,,\; AH \neq \emptyset$;  (iii) $AHK^c \neq \emptyset \,,\; B^cK \neq \emptyset$. \\
In all three cases the assessment $(1,0)$ is coherent; thus $A|H \nRightarrow_p B|K$. \\
((b) $\Rightarrow$ (a)). We preliminarily observe that $\{A|H\}$ is p-consistent. Now, if $A|H \;\subseteq \, B|K$, then p-entailment of $B|K$ from $A|H$ follows from monotonicity of conditional probability w.r.t. inclusion relation. If $K\subseteq B$, then trivially $A|H$ p-entails $B|K$.
\end{proof}
\begin{example} Given any events $A, B$, for the conditional events $A \vee B, B|A^c$ it holds that $B|A^c = (A \vee B)|A^c \subseteq A \vee B$. Then, for the assessment $P(A \vee B)=x, P(B|A^c)=y$, the necessary condition of coherence $0 \leq y \leq x \leq 1$ must be satisfied.
 Of course, $P(A \vee B)$ 'high' does not imply $P(B|A^c)$ 'high'; for instance, it is coherent to assign $P(B|A^c)=0.2$ and $P(A \vee B)=0.8$. Then, the inference of the conditional event $B|A^c$ from the disjunction $A \vee B$ may be 'weak'. A probabilistic analysis characterizing the cases in which such an inference is 'strong' has been made in \cite{GiOv12}.
\end{example}
\begin{remark}\label{GN-REM}
We observe that, under the hypothesis $A|H\subseteq B|K$, the constituents generated by  $\{A|H,B|K\}$ belong to the family
\[
\mathcal{H}=\{AHBK, H^cBK, A^cHBK, A^cHK^c, A^cHB^cK, H^cK^c\} \,.
\]
The quasi-conjunction is $\mathcal{C}(A|H,B|K)=(AH \vee H^cBK) \,|\, (H \vee K)$ and, as shown by  Table \ref{TABLE-QC}, for any constituent in $\mathcal{H}$ it holds that
\[
t(A|H)\leq t(\mathcal{C}(A|H,B|K) )\leq  t(B|K) \,.
\]
Then, we have (see Remark \ref{REM-GN})
\begin{equation}\label{QC-GN}
A|H \subseteq B|K \; \Longrightarrow \;  A|H \subseteq \mathcal{C}(A|H,B|K) \subseteq B|K \,.
\end{equation}
\end{remark}
As conditional probability is monotonic w.r.t. inclusion relation among conditional events (\cite{GoNg88}), it holds that $P(A|H) \leq P[\mathcal{C}(A|H,B|K)] \leq P(B|K)$. As shown by Theorem \ref{COER-GN}, in our coherence-based approach the monotonic property is obtained without assuming that $P(H)$ and $P(K)$ are positive. The next result establishes that  $P[\mathcal{C}(A|H,B|K)]$ can coherently assume all the values in the interval $[P(A|H),P(B|K)]$. We have
\begin{proposition}\label{PROP-QC-GN}{\rm Let be given any coherent assessment $(x,y)$ on $\{A|H, B|K\}$, with $A|H \subseteq B|K$ and with no further logical relations. Then, the extension $z = P[\mathcal{C}(A|H,B|K)]$ is coherent if and only if $l \leq z \leq u$, where
\[
 l = x = \min(x,y)=T_M(x,y) \,,\; u = y = \max(x,y)=S_M(x,y) \,.
\]
}\end{proposition}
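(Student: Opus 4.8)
The plan is to first observe that, since $A|H \subseteq B|K$, Theorem~\ref{COER-GN} gives $\Pi \subseteq T_{x\leq y}$, so every coherent pair $(x,y)$ satisfies $x \leq y$; hence $\min(x,y)=T_M(x,y)=x$ and $\max(x,y)=S_M(x,y)=y$, and the claim reduces to showing that the coherent extensions $z$ are exactly those with $x \leq z \leq y$. I would then split the equivalence into necessity ($z \in [x,y]$ is forced) and sufficiency (every such $z$ is attainable), treating necessity through the inclusion relation and sufficiency through the geometric convex-hull criterion of Section~\ref{PRELIM}.

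For necessity I would invoke the chain established in Remark~\ref{GN-REM}, namely $A|H \subseteq \mathcal{C}(A|H,B|K) \subseteq B|K$ (formula~(\ref{QC-GN})). Applying Theorem~\ref{COER-GN} to the pair $\{A|H,\mathcal{C}(A|H,B|K)\}$ yields $x \leq z$ for every coherent assessment, and applying it to $\{\mathcal{C}(A|H,B|K),B|K\}$ yields $z \leq y$; together these give $x \leq z \leq y$. This is precisely the coherence-based monotonicity w.r.t.\ Goodman--Nguyen inclusion recalled just before the statement, which does not require $P(H),P(K)>0$.

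For sufficiency I would use the geometrical approach. Under the hypothesis $A|H \subseteq B|K$ with no further logical relations, Remark~\ref{GN-REM} tells us the constituents contained in $H \vee K$ are exactly $C_1=AHBK$, $C_4=H^cBK$, $C_6=A^cHBK$, $C_7=A^cHK^c$, $C_8=A^cHB^cK$, with associated points (columns 2 and 6 of Table~\ref{TABLE-QC}) $Q_1=(1,1,1)$, $Q_4=(x,1,1)$, $Q_6=(0,1,0)$, $Q_7=(0,y,0)$, $Q_8=(0,0,0)$; in particular $\I$ does not depend on $z$. For a fixed $z \in [x,y]$ (and $x<1$) I would exhibit the explicit convex combination
\[
\lambda_1=\frac{x(1-z)}{1-x},\quad \lambda_4=\frac{z-x}{1-x},\quad \lambda_6=y-z,\quad \lambda_7=0,\quad \lambda_8=1-y,
\]
whose entries are nonnegative precisely because $x\leq z\leq y\leq 1$, which sum to $1$ and reproduce $\P=(x,y,z)$, so that $\P\in\I$; the degenerate case $x=1$ forces $y=z=1$ and $\P=Q_1$. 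By Theorem~\ref{CNES} the membership $\P\in\I$ is necessary for coherence, and to show it is also sufficient here I would verify through Theorem~\ref{COER-P0} that the set $I_0$ causes no obstruction.

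I expect the verification of the $I_0$ condition to be the main obstacle. One must check that the conditioning events $H$, $K$, $H\vee K$ all receive positive upper probability given $H\vee K$, so that $I_0=\emptyset$ and coherence collapses to the single condition $\P\in\I$, exactly as in the logically independent case of Figure~\ref{fig1}; otherwise one isolates the sub-assessment $\P_0$ on the degenerate conditioning events and checks its coherence directly. The delicate subcases are the boundary values $y=1$ or $x=0$, where some of the weights above vanish and one must instead select a solution $\Lambda$ of $(\Sigma)$ that places mass on the constituents inside $H$ and inside $K$ in order to guarantee $M_1>0$ and $M_2>0$.
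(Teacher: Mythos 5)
Your proposal is correct and follows essentially the same route as the paper: necessity via the Goodman--Nguyen chain $A|H \subseteq \mathcal{C}(A|H,B|K) \subseteq B|K$ of formula (\ref{QC-GN}), sufficiency via membership of $\mathcal{P}=(x,y,z)$ in the convex hull $\mathcal{I}$ of the five points $Q_h$. The one real difference is in the sufficiency step: the paper checks only the two endpoints, writing $\mathcal{P}_l = xQ_1+(y-x)Q_3+(1-y)Q_5$ and $\mathcal{P}_u = \frac{x-xy}{1-x}Q_1+\frac{y-x}{1-x}Q_2+(1-y)Q_5$ (with $(x,y)=(1,1)$ treated separately) and relies on the fact that the set of coherent extensions is an interval, whereas you exhibit a single convex combination representing an arbitrary $z\in[x,y]$; your weights $\lambda_1=\frac{x(1-z)}{1-x}$, $\lambda_4=\frac{z-x}{1-x}$, $\lambda_6=y-z$, $\lambda_8=1-y$ do check out (nonnegative under $x\leq z\leq y\leq 1$, summing to $1$, and reproducing $(x,y,z)$ on the points $(1,1,1),(x,1,1),(0,1,0),(0,0,0)$), so your version is marginally more self-contained.

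On what you flag as ``the main obstacle'': it dissolves at once, and more simply than you anticipate. Every solution of $(\Sigma)$ places total mass $1$ on the constituents contained in $\mathcal{H}_n=H\vee K$, so the index of $\mathcal{C}(A|H,B|K)$ never lies in $I_0$; hence $I_0\subseteq\{1,2\}$, and $\mathcal{P}_0$ is a sub-assessment of $(x,y)$ on a subfamily of $\{A|H,B|K\}$, which is coherent by hypothesis. Thus condition (ii) of Theorem \ref{COER-P0} holds automatically and no boundary case analysis is needed; this is why the paper can say that coherence ``simply amounts to'' $\mathcal{P}\in\mathcal{I}$. Note, moreover, that your fallback is the \emph{only} viable route in the boundary cases: your hope of selecting $\Lambda$ with $M_1>0$ cannot succeed in general, since at $\mathcal{P}=(x,1,1)$ with $x<1$ the system $(\Sigma)$ has the unique solution concentrating all mass on the constituent $H^cBK$, so that $M_1=0$ and $I_0=\{1\}$ --- harmless, for exactly the reason just given.
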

\begin{proof}  We recall that, apart from $A|H \subseteq B|K$, there are no further logical relations; thus it holds that $\Pi = T_{x \leq y}$ (i.e. $0\leq x\leq y\leq 1$). Denoting by $[l,u]$ the interval of coherent extensions of the assessment $(x,y)$ to $\mathcal{C}(A|H,B|K)$, by (\ref{QC-GN}) it holds that $[l,u] \subseteq [x,y]$. In order to prove that $[l,u] = [x,y]$ it is enough to verify that both the assessments $\P_l=(x,y,x)$ and $\P_u=(x,y,y)$ are coherent. Given any assessment $\P=(x,y,z)$, with $x\leq y$, we study the coherence of $\P$ by the geometrical approach described in Section \ref{PRELIM}. The constituents generated by the family and contained in $H\vee K$ are:
\[
\begin{array}{l}
C_1 = AHBK \,,   C_2 = H^cBK \,,
  C_3 = A^cHBK \,,  C_4 = A^cHK^c \,,  C_5
= A^cHB^cK  \,.
\end{array}
\]
The corresponding points $Q_h$'s are
\[
\begin{array}{l}
Q_1 = (1,1,1) \,,  Q_2 =
(x,1,1)\,,    Q_3 = (0,1,0)\,, Q_4 =
(0,y,0) \,,  Q_5 = (0,0,0)  \,,
\end{array}
\]
and, in our case, the coherence of $\P$ simply amounts to the
geometrical condition $\P \in \I$, where $\I$ is the convex hull
of the points $Q_1,Q_2,\ldots, Q_5$. \\ It can be verified that $\P_l = x Q_1 + (y-x)Q_3 + (1-y)Q_5$, so that $\P_l \in \I$; hence $l=x$. Concerning $\P_u$, we first observe that when $(x,y)=(1,1)$ we have $\P_u=(1,1,1)=Q_1$, so that $\P_u \in \I$; hence $u=y=1$. Assuming $(x,y) \neq (1,1)$, it can be verified that $\P_u = \frac{x-xy}{1-x} Q_1 + \frac{y-x}{1-x}Q_2 + (1-y)Q_5$, so that $\P_u \in \I$; hence $u=y$.
Therefore, $[l,u]=[x,y]$.
\end{proof}
We remark that the lower/upper bound above, $l,u$, may change if we add further logical relations; for instance, if $H=K$, it is $\mathcal{C}(A|H,B|H) = A|H$, in which case $l=u=x$.
Finally, in agreement with Remark \ref{REM-SC}, we observe that
$T_L(x,y) \leq  \min(x,y) \leq \max(x,y) \leq S_0^H(x,y)$.
We also recall that  $T_M(x,y)=\min(x,y)$ is the largest t-norm and $S_M(x,y)=\max(x,y)$ is the smallest t-conorm (\cite{KlMP05}).
Figure \ref{fig4}  shows the convex hull $\I$ for given values  $x,y$, with the associated interval $[l,u]$ for $z=P[\C(A|H,B|K)]$, when $A|H\subseteq B|K$.
\begin{figure}[!ht]
\begin{center}
\includegraphics[ scale=\myscale]{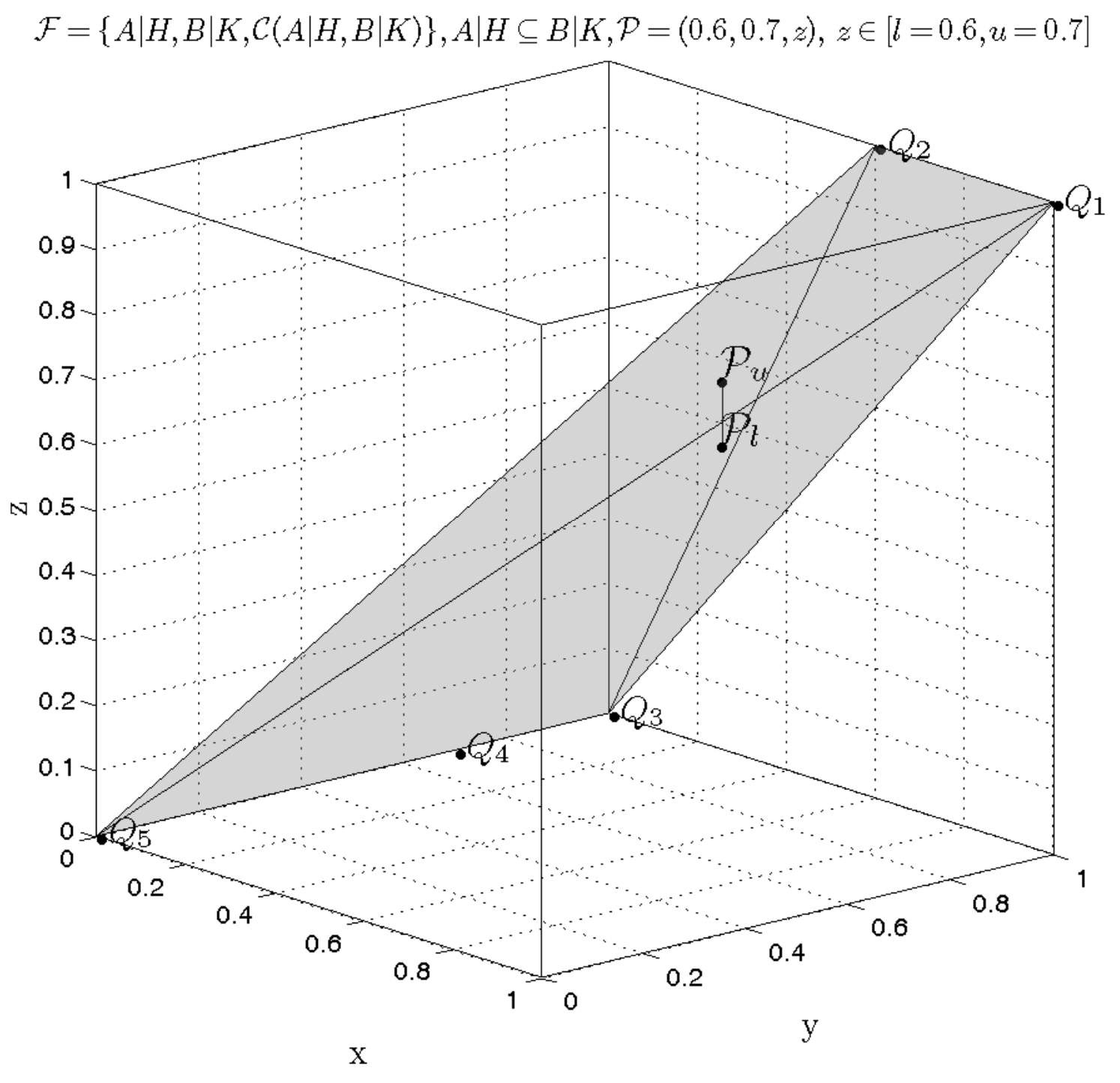}
\caption{The convex hull $\I$  associated with the pair $(\F,\P)$ when $A|H\subseteq B|K$.
The  interval $[l,u]$ for $z=P[C(A|H,B|K)]$ is the range of the third coordinate $z$  of each $\P \in \overline{\P_l\P_u}=\{(x,y,z):z\in[T_M(x,y),S_{M}(x,y)]\}$. The segment $\overline{\P_l\P_u}$ is the intersection  between the segment $\{(x,y,z):z\in [0,1]\}$  and the convex hull $\I$. This  intersection  is empty for $x>y$ because of $\Pi\subseteq T_{x\leq y}$, .}
\label{fig4}
\end{center}
\end{figure}
\subsection{Compound Probability Theorem}\label{SUB-COMPOUND}
We now examine the quasi conjunction of $A|H$ and $B|AH$, with  $A,\,B,\, H$ logically independent events.
As it can be easily verified, we have $\mathcal{C}(A|H,B|AH) = AB|H$; moreover, the probability assessment $(x,y)$ on $\{A|H, B|AH \}$ is coherent, for every $(x,y)\in[0,1]^2$. Hence, by the compound probability theorem, if the  assessment $\mathcal{P} = (x,y,z)$ on
$\mathcal{F} =$\linebreak $\{A|H, B|AH, AB|H\}$ is coherent, then  $z=xy$; i.e., $l=u=x\cdot y=T_P(x,y)$. \\
In agreement with Remark~\ref{REM-SC}, we observe that $T_L(x,y) \leq  xy \leq S_0^H(x,y)$. \\
We observe that $A|H = AH|H, B|AH = ABH|AH, AB|H = ABH|H$; as $z=xy$, $\{AH|H, ABH|AH\}$ p-entails $ABH|H$ (\textit{transitive property}). Moreover $AB|H \subseteq B|H$; hence $\{A|H, B|AH\}$ p-entails $B|H$ ({\em cut rule}). \\

\subsection{Lower and Upper Bounds for the Quasi Conjunction of $n$ Conditional Events}
\label{SEC-QCN}
In this subsection we generalize formula (\ref{EQ-LOW-UP-QC}). Let be given $n$ conditional events $E_1|H_1, \ldots, E_n|H_n$. By the associative property of quasi conjunction, defining $\mathcal{F}_k = \{E_1|H_1, \ldots, E_k|H_k\}$, for each $k=2,
\ldots,n$ it holds that $\mathcal{C}(\mathcal{F}_k) = \mathcal{C}(\mathcal{C}(\mathcal{F}_{k-1}), E_k|H_k)$. Then, we have
\begin{theorem}\label{LOW-UP-QC} {\rm Given a probability assessment
$\mathcal{P}_n = (p_1, p_2, \ldots, p_n)$ on $\mathcal{F}_n = \{E_1|H_1, \ldots,
E_n|H_n\}$, let $[l_k,u_k]$ be
the interval of coherent extensions of the assessment $\mathcal{P}_k = (p_1, p_2, \ldots, p_k)$ on
the quasi conjunction $\mathcal{C}(\mathcal{F}_k)$, where $\mathcal{F}_k = \{E_1|H_1, \ldots,
E_k|H_k\}$. Then, assuming $E_1, H_1, \ldots, E_n, H_n$ logically independent,
for each $k=2,\ldots,n$, we have
\begin{equation}\label{LOWER-Lk}
l_k = T_L(p_1,p_2, \ldots, p_k)= \max(p_1+p_2+\ldots+p_k-(k-1), 0)  \,,
\end{equation}
\begin{equation}\label{UPPER-Uk}
u_k = S_0^H(p_1,p_2, \ldots,p_k) = \left\{\begin{array}{ll} 1, &  p_i=1 \mbox{ for at least one } i,
\\
\frac{\sum_{i=1}^k\frac{p_i}{1-p_i}
}
{
\sum_{i=1}^k\frac{p_i}{1-p_i}+1
}
 \,, & p_i< 1 \mbox{ for } i=1,\ldots, k.
\end{array}\right.
\end{equation}
}
\end{theorem}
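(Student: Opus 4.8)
The plan is to argue by induction on $k$, exploiting the associativity of quasi conjunction recalled just above the statement, namely $\mathcal{C}(\mathcal{F}_k) = \mathcal{C}(\mathcal{C}(\mathcal{F}_{k-1}), E_k|H_k)$. The base case $k=2$ is exactly formula (\ref{EQ-LOW-UP-QC}), which gives $l_2 = T_L(p_1,p_2)$ and $u_2 = S_0^H(p_1,p_2)$. For the inductive step I would assume that, given $(p_1,\ldots,p_{k-1})$, the coherent values of $w = P[\mathcal{C}(\mathcal{F}_{k-1})]$ fill the interval $[l_{k-1}, u_{k-1}]$, and then reduce the range of $z = P[\mathcal{C}(\mathcal{F}_k)]$ to a single application of the two-conditional-event bound to the pair $\{\mathcal{C}(\mathcal{F}_{k-1}), E_k|H_k\}$.

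The decisive observation is that $E_k$ and $H_k$ are logically independent of every event occurring in $\mathcal{F}_{k-1}$. Consequently the conditional event $\mathcal{C}(\mathcal{F}_{k-1}) = V|\mathcal{H}_{k-1}$, with $V = \bigwedge_{i=1}^{k-1}(E_iH_i \vee H_i^c)$ and $\mathcal{H}_{k-1} = \bigvee_{i=1}^{k-1}H_i$, together with $E_k|H_k$ reproduces, state by state, the same three-valued truth-table structure as the pair $\{A|H, B|K\}$ of Table \ref{TABLE-QC}: each of the three states $\{T,F,V\}$ of $\mathcal{C}(\mathcal{F}_{k-1})$ is attainable and, by freshness of $E_k,H_k$, is compatible with each of the three states of $E_k|H_k$, so all eight constituents contained in $\mathcal{H}_k = \mathcal{H}_{k-1} \vee H_k$ are non-empty. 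Hence the points $Q_h$ are obtained from those of Table \ref{TABLE-QC} by the substitution $x \mapsto w$, $y \mapsto p_k$, the convex hull $\mathcal{I}$ is the same, and for every coherent pair $(w,p_k)$ the coherent extensions $z$ form precisely the interval $[T_L(w,p_k), S_0^H(w,p_k)]$.

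I would then glue the two layers of coherence: a value $z$ is a coherent extension of $(p_1,\ldots,p_k)$ if and only if there is some coherent $w \in [l_{k-1}, u_{k-1}]$ for which $z \in [T_L(w,p_k), S_0^H(w,p_k)]$; this chaining of the assessment on $\mathcal{F}_{k-1} \cup \{\mathcal{C}(\mathcal{F}_{k-1})\}$ with that on $\{\mathcal{C}(\mathcal{F}_{k-1}), E_k|H_k, \mathcal{C}(\mathcal{F}_k)\}$ is legitimate because $\mathcal{C}(\mathcal{F}_{k-1})$ is a logical function of $\mathcal{F}_{k-1}$ and $E_k,H_k$ are fresh, so the corresponding systems $(\Sigma)$ can be solved jointly (Theorems \ref{GILIO-93} and \ref{COER-P0}). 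The admissible set is therefore $\bigcup_{w \in [l_{k-1}, u_{k-1}]} [T_L(w,p_k), S_0^H(w,p_k)]$. Since $T_L(\cdot,p_k)$ and $S_0^H(\cdot,p_k)$ are continuous and non-decreasing in their first argument and satisfy $T_L \leq S_0^H$ pointwise, this union is the single interval $[\,T_L(l_{k-1},p_k),\, S_0^H(u_{k-1},p_k)\,]$, whence $l_k = T_L(l_{k-1},p_k)$ and $u_k = S_0^H(u_{k-1},p_k)$.

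The proof is then closed by the associativity of the two operations. For the Lukasiewicz t-norm, $T_L(l_{k-1},p_k) = \max(l_{k-1}+p_k-1,0)$ combined with the inductive form of $l_{k-1}$ yields (\ref{LOWER-Lk}); for the Hamacher t-conorm I would use the additive transform $f(t) = \frac{t}{1-t}$, which satisfies $f(S_0^H(x,y)) = f(x)+f(y)$, to obtain $f(u_k) = \sum_{i=1}^k \frac{p_i}{1-p_i}$ and hence (\ref{UPPER-Uk}), the case $p_i=1$ for some $i$ giving $u_k=1$ directly since $S_0^H(1,y)=1$. The main obstacle I anticipate is the rigorous justification of the gluing step together with the claim that the two-event interval formula persists even though $w$ ranges only over $[l_{k-1},u_{k-1}]$ rather than over all of $[0,1]$: the point to verify carefully is that the non-emptiness of the eight constituents, guaranteed by the logical independence of $E_k,H_k$ from $\mathcal{F}_{k-1}$, makes the extension interval depend on $(w,p_k)$ through the very same convex hull as in the base case, so that no constituent collapses and no tightening of the bounds occurs.
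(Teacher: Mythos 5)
Your proof follows essentially the same route as the paper's: induction on $k$ via the associativity $\mathcal{C}(\mathcal{F}_k)=\mathcal{C}(\mathcal{C}(\mathcal{F}_{k-1}),E_k|H_k)$, reduction to the two-event bounds (\ref{EQ-LOW-UP-QC}) applied with $P[\mathcal{C}(\mathcal{F}_{k-1})]=w$ ranging over $[l_{k-1},u_{k-1}]$, monotonicity of $T_L(\cdot,p_k)$ and $S_0^H(\cdot,p_k)$ in the first argument, and the closed-form expressions obtained from the associativity/additive-generator material in the appendices. If anything, you are more explicit than the paper on the two steps it leaves implicit --- that the pair $\{\mathcal{C}(\mathcal{F}_{k-1}),E_k|H_k\}$ reproduces the convex hull of Table \ref{TABLE-QC} because no constituent collapses, and the gluing of the two coherence layers into a single interval --- so your version fills in details rather than diverging in method.
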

\begin{proof}
 Of course, from (\ref{EQ-LOW-UP-QC}) it is $l_2 =
T_L(p_1,p_2) \,,\;\;\; u_2 =
S_0^H(p_1,p_2)$.
We recall that both $T_L$ and $S_0^H$ are associative.
Moreover, as
\[
\C(\F_3) = \C(\C(\F_2), E_3|H_3) \,,\;\;\; l_2 \leq P[\C(\F_2)]
\leq u_2 \,, \] defining $P[\C(\F_2)] = x$ and observing that the
quantities
$ T_L(x,p_3) $ , $S_0^H(x,p_3)$
are non-decreasing functions of $x$, we have
\[\begin{array}{l}
l_3 =T_L(l_2,p_3)=T_L(T_L(p_1,p_2),p_3)= T_L(p_1,p_2,p_3) \,, \end{array}\]
\[\begin{array}{l}
u_3 = S_0^H(u_2,p_3)=S_0^H(S_0^H(p_1,p_2),p_3)= S_0^H(p_1,p_2,p_3)
 \,.
\end{array}\]
Considering any $k > 3$, we proceed by induction. Assuming
\[
\begin{array}{l}
l_{k-1} = T_L(p_1,p_2,\ldots,p_{k-1}) \,, \;\;\; u_{k-1}= S_0^H(p_1,p_2,\ldots,p_{k-1}) \,,
\end{array}\]
as $\C(\F_k) = \C(\C(\F_{k-1}), E_k|H_k)$ and  $l_{k-1} \leq P[\C(\F_{k-1})] \leq u_{k-1}$,
 defining $P[\C(\F_{k-1})] = x$ and observing that the quantities $ T_L(x,p_k)$ and $S_0^H(x,p_k)$
 are non-decreasing functions of $x$, we have
\[\begin{array}{l}
l_k =T_L(l_{k-1},p_k)=T_L(p_1,p_2,\ldots,p_{k}), \\ u_k = S_0^H(u_{k-1},p_k)=S_0^H(p_1,p_2,\ldots,p_{k}).
\end{array}\]
The explicit values of $l_k$ and $u_k$ in (\ref{LOWER-Lk}) and (\ref{UPPER-Uk}) follow by \ref{SEZ-TNORMN} and \ref{HAM-NORM-CON}.
\end{proof}
\noindent Notice that $(p_1,p_2,\ldots,p_n) = (1,1,\ldots,1)$ implies $T_L(p_1,p_2,\ldots,p_n) =1$. Then, from Theorem \ref{LOW-UP-QC}, we obtain the following {\em Quasi And} rule
 (see also \cite[Theorem 4]{GiSa11b}).
\begin{corollary}\label{ENT-QC}{\rm
Given a p-consistent family of conditional events $\mathcal{F}_n$, we have
\begin{equation}\label{ENTAIL-QCSUB}
\begin{array}{llll}
\mbox{(Quasi And)} &  \hspace{2.5cm} & \F_n \Rightarrow_p  \C(\F_n) \;. &\hspace{3.5cm}
\end{array}
\end{equation}
}\end{corollary}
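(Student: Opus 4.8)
The plan is to unwind the definition of p-entailment and reduce everything to the necessary coherence condition $\P\in\I$. By Definition~\ref{PE} together with Remark~\ref{ENT-F}, proving $\F_n \Rightarrow_p \C(\F_n)$ amounts to showing that \emph{every} coherent extension $(1,\ldots,1,z)$ of the assessment on $\F_n \cup \{\C(\F_n)\}$ must satisfy $z=1$. The key structural observation I would record first is that the conditioning event of $\C(\F_n)$ is exactly $\H_n=\bigvee_{i=1}^n H_i$, while its numerator $\bigwedge_{i=1}^n(E_iH_i\vee H_i^c)$ is a Boolean function of $E_1,H_1,\ldots,E_n,H_n$. Hence adjoining $\C(\F_n)$ creates no new constituents: the relevant ones are still $C_1,\ldots,C_m$, those contained in $\H_n$, and on each of them $\C(\F_n)$ takes the value $\True$ or $\False$ (never $\Void$), so its coordinate is in $\{0,1\}$.

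For each such constituent I form the point $Q_h=(q_{h1},\ldots,q_{hn},q_{h,n+1})$ associated with the enlarged family as in the geometric construction of Section~\ref{PRELIM}, where the first $n$ coordinates use $p_j=1$ and $q_{h,n+1}\in\{0,1\}$ records the truth value of $\C(\F_n)$ on $C_h$. Now take any coherent $(1,\ldots,1,z)$. By Theorem~\ref{CNES} it lies in the convex hull $\I$ of $Q_1,\ldots,Q_m$, so there are weights $\lambda_h\ge 0$ with $\sum_h\lambda_h=1$, with $\sum_h\lambda_h q_{hj}=1$ for $j=1,\ldots,n$, and with $\sum_h\lambda_h q_{h,n+1}=z$. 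Since each $q_{hj}\le 1$, the identity $\sum_h\lambda_h(1-q_{hj})=0$ has only nonnegative summands, forcing $\lambda_h(1-q_{hj})=0$; thus every constituent of positive weight satisfies $q_{hj}=1$ for all $j$.

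It remains to read off what $q_{hj}=1$ for all $j$ means when $p_j=1$: it says $C_h\subseteq E_jH_j\vee H_j^c$ for each $j$, i.e.\ $C_h\subseteq\bigwedge_{i=1}^n(E_iH_i\vee H_i^c)$, which is precisely the numerator of $\C(\F_n)$. As $C_h\subseteq\H_n$ too, $\C(\F_n)$ is true on $C_h$, so $q_{h,n+1}=1$. Consequently $z=\sum_h\lambda_h q_{h,n+1}=\sum_{\lambda_h>0}\lambda_h=1$, which is exactly $\F_n\Rightarrow_p\C(\F_n)$. The p-consistency hypothesis of Definition~\ref{PC} guarantees that $(1,\ldots,1)$ is coherent, so at least one constituent with $Q_h$ all-ones exists; this keeps the argument non-vacuous and the conclusion meaningful.

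The subtle point, and the reason I would not simply quote Theorem~\ref{LOW-UP-QC}, is that the corollary assumes \emph{no} logical independence, whereas that theorem does. Under independence the conclusion is immediate, since $T_L(1,\ldots,1)=\max(n-(n-1),0)=1$ gives $l_n=1$; and by Remark~\ref{REM-SC} logical dependencies can only raise the lower bound, never above $1$, so $z=1$ persists. I nevertheless prefer the geometric argument above, because it uses only the necessary coherence condition $\P\in\I$ and handles arbitrary logical dependencies uniformly. The one step deserving care is the structural claim that adjoining $\C(\F_n)$ leaves the constituent partition intact, so that $z$ is governed entirely by the constituents already contained in $\H_n$; once that is in hand, the extreme-point argument on the all-ones vertex does the rest.
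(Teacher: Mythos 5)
Your proof is correct, but it takes a genuinely different route from the paper's. The paper obtains the Quasi And rule as an immediate corollary of Theorem~\ref{LOW-UP-QC}: under logical independence of $E_1,H_1,\ldots,E_n,H_n$ the coherent extensions to $\C(\F_n)$ lie in $[l_n,u_n]$ with $l_n=T_L(1,\ldots,1)=1$, forcing $z=1$; the case of logical dependencies is then dispatched by the observation (Remark~\ref{REM-SC}, invoked again after formula~(\ref{IJAR-THM5})) that dependencies only remove constituents and hence cannot decrease the lower bound. You instead argue directly at the all-ones vertex: any coherent $(1,\ldots,1,z)$ lies, by Theorem~\ref{CNES}, in the convex hull of the points $Q_h$, the equations $\sum_h\lambda_h q_{hj}=1$ with $q_{hj}\le 1$ force every positively weighted constituent to satisfy $C_h\subseteq\bigwedge_j(E_jH_j\vee H_j^c)$, and since $C_h\subseteq\H_n$ this makes $\C(\F_n)$ true there, whence $z=1$. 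Your preliminary structural claim is verified correctly: each constituent generated by $\F_n$ determines the truth value of every $E_iH_i\vee H_i^c$ and of $\H_n$, so adjoining $\C(\F_n)$ refines nothing and its coordinate on each $C_h\subseteq\H_n$ is in $\{0,1\}$. What your approach buys is uniformity and rigor in exactly the spot the paper treats informally: you need neither logical independence nor the monotonicity-under-dependencies remark, and you use only the \emph{necessary} direction of the coherence characterization $\P\in\I$. What the paper's route buys is more quantitative information --- the full interval $[T_L,\,S_0^H]$ of coherent extensions, of which the corollary is a one-line specialization --- together with brevity. You also correctly note that p-consistency is what keeps Definition~\ref{PE} non-vacuous; that is consistent with how the paper uses the hypothesis.
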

 We observe that, from (\ref{ENTAIL-SUB}), we obtain (\cite[Theorem 5]{GiSa12})
\begin{equation}\label{IJAR-THM5}
\mathcal{F}_n \Rightarrow_p  \C(\mathcal{S})\;,\;\; \forall  \mathcal{S}\subseteq \mathcal{F}_n\,.
\end{equation}
Of course,   (\ref{IJAR-THM5}) still holds when there are  logical dependencies because in this case the lower bound for quasi conjunction does not decrease, as observed in Remark \ref{REM-SC}. In the next example we illustrate the key role of quasi conjunction  when we study
p-entailment. This example has been already examined in \cite{Gili02},  by using the inference rules of System P in the setting of coherence.
\begin{example}[{\em Linda's example}]
We start with a given p-consistent family of conditional
events $\mathcal{F}$; then, we use the quasi conjunction to check the
p-entailment of some further conditional events from $\mathcal{F}$. The
family  $\mathcal{F}$ concerns various attributes for a given party  (\emph{the party is great, the party is noisy, Linda and Steve are present}, and so on). We introduce the following events:
\[
\begin{array}{ll}
L= \mbox{``\emph{Linda goes to the party}''}; & S= \mbox{``\emph{Steve goes to the party}''};\\
G= \mbox{``\emph{the party is great}''}; & N= \mbox{``\emph{the party is noisy}''}\,,
\end{array}
\]
which are assumed to be  logically independent. Then, we consider the family $\mathcal{F}=\{G|L,S|L,N^c|LS,L|S,G^c|N^c\}$ and the family of further conditional events
\[
\mathcal{K}=\{N^c|L,\;  L^c|\Omega,\;  GN^c|LS,\;  N^c|S,\;  N^c|(L\vee S)\}\,.
\]
It can be verified that the assessment $(1,1,1,1,1)$ on $\F$ is coherent, i.e. the family $\F$ is p-consistent. By exploiting quasi conjunction, we can verify that $\F$ p-entails $\K$; that is $\mathcal{F}$ p-entails each conditional event in $\K$. Indeed:\\
(a) concerning $N^c|L$, for the subset $\S=\{S|L,N^c|LS\}$ we have $\C(\S)=N^cS|L \subseteq N^c|L$; thus: $\F \Rightarrow_p \C(\S)  \Rightarrow_p N^c|L;$\\
(b) concerning $L^c|\Omega$, for the subset $\S=\{G|L,S|L,N^c|LS,G^c|N^c\}$ we have $\C(\S)=G^cL^cN^c|(L\vee N^c) \subseteq L^c|\Omega $; thus: $\F \Rightarrow_p \C(\S)  \Rightarrow_p L^c|\Omega;$\\
(c) concerning $GN^c|LS$, for the subset $\S=\{G|L,S|L,N^c|LS\}$ we have $\C(\S)
=GN^cS|L \subseteq GN^c|LS $; thus: $\F \Rightarrow_p \C(\S)  \Rightarrow_p GN^c|LS;$\\
(d) concerning $N^c|S$, for the subset $\S=\{N^c|LS,L|S\}$ we have $\C(\S)
=LN^c|S \subseteq N^c|S $; thus: $\F \Rightarrow_p \C(\S)  \Rightarrow_p N^c|S;$\\
(e) concerning $N^c|(L\vee S)$, for the subset $\S=\{S|L,N^c|LS,L|S\}$ we have $\C(\S)
=LN^cS|(L\vee S) \subseteq N^c|(L\vee S)$; thus: $\F \Rightarrow_p \C(\S)  \Rightarrow_p N^c|(L\vee S)$.\\
We point out  that the p-entailment of  $\K$ from $\F$ can be also verified by applying  Algorithm 2 in \cite{GiSa12}. We also observe that,  using the  basic events $L,S,G,N$, we can define conditional events which are not p-entailed from $\F$. For instance, concerning   $G|N$, associated with the conditional   ``\emph{if the party is noisy, then the party is great}'', it can be proved that $\F$ does not p-entail $G|N$. Indeed, there is no  subset $\S\subseteq \F$, with $\S\neq \emptyset$,  such that $ \C(\S)  \Rightarrow_p G|N$  (see \cite[Theorem 6]{GiSa12}).
\end{example}
\subsection{The Case $E_1|H_1\subseteq E_2|H_2\subseteq \ldots \subseteq E_n|H_n$}
In this subsection we give a result on quasi conjunctions when  $E_i|H_i \subseteq E_{i+1}|H_{i+1}, i=1,\ldots,n-1$. We have
\begin{theorem}\label{THM-GN}
Given a family $\mathcal{F}_n = \{E_1|H_1, \ldots, E_n|H_n\}$ of conditional events such that
$E_1|H_1\subseteq E_2|H_2\subseteq \ldots \subseteq E_n|H_n$,
and a coherent probability assessment
$\mathcal{P}_n = (p_1, p_2, \ldots, p_n)$ on $\mathcal{F}_n$,
let $\mathcal{C}(\mathcal{F}_k)$ be the quasi conjunction of $\mathcal{F}_k=\{E_i|H_i, i=1,\ldots,k\}$, $k=2,\ldots,n$. Moreover, let $[l_k,u_k]$ be the interval of coherent extensions on $\mathcal{C}(\mathcal{F}_k)$ of the assessment $(p_1, p_2, \ldots, p_k)$ on $\mathcal{F}_k$. We have:
(i) $E_1|H_1\subseteq \mathcal{C}(\mathcal{F}_2) \subseteq \ldots \subseteq \mathcal{C}(\mathcal{F}_n) \subseteq E_n|H_n$;
(ii) by assuming no further logical relations, any probability assessment $(z_2,\ldots,z_k)$ on $\{\mathcal{C}(\mathcal{F}_2), \ldots, \mathcal{C}(\mathcal{F}_k)\}$ is a coherent extension of the assessment $(p_1, p_2, \ldots, p_k)$ on $\mathcal{F}_k$ if and only if
$p_1 \leq z_2 \leq \cdots \leq z_k \leq p_k \,,\; k=2,\ldots,n$;
moreover
\[l_k = \min (p_1,\ldots, p_k) = p_1 \,,\;  u_k = \max (p_1,\ldots, p_k) = p_k \,,\; k=2,\ldots,n \,. \]
\end{theorem}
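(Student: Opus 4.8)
The plan is to establish (i) by induction on $k$, using the two-event inclusion result (\ref{QC-GN}) as the engine, together with the associativity of quasi conjunction and the transitivity of the Goodman--Nguyen relation. The base step is immediate: since $\mathcal{C}(\mathcal{F}_1)=E_1|H_1$ and $E_1|H_1\subseteq E_2|H_2$, applying (\ref{QC-GN}) gives $E_1|H_1\subseteq \mathcal{C}(\mathcal{F}_2)\subseteq E_2|H_2$. For the inductive step I would assume the chain up to $\mathcal{C}(\mathcal{F}_{k-1})\subseteq E_{k-1}|H_{k-1}$; combining this with the hypothesis $E_{k-1}|H_{k-1}\subseteq E_k|H_k$ yields $\mathcal{C}(\mathcal{F}_{k-1})\subseteq E_k|H_k$ by transitivity. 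Then, writing $\mathcal{C}(\mathcal{F}_k)=\mathcal{C}(\mathcal{C}(\mathcal{F}_{k-1}),E_k|H_k)$ by associativity and applying (\ref{QC-GN}) with $A|H=\mathcal{C}(\mathcal{F}_{k-1})$ and $B|K=E_k|H_k$, I obtain $\mathcal{C}(\mathcal{F}_{k-1})\subseteq \mathcal{C}(\mathcal{F}_k)\subseteq E_k|H_k$, which splices onto the existing chain. The only point to verify is that $\mathcal{C}(\mathcal{F}_{k-1})$ may legitimately play the role of a single conditional event in (\ref{QC-GN}); this is clear because quasi conjunction returns a conditional event.

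The order relations in (ii) then follow, on the necessity side, purely from (i) and the monotonicity of coherent conditional probability with respect to Goodman--Nguyen inclusion (the property invoked just before Proposition \ref{PROP-QC-GN}, valid without assuming positive conditioning probabilities). Thus $E_1|H_1\subseteq\mathcal{C}(\mathcal{F}_2)$ gives $p_1\le z_2$, each link $\mathcal{C}(\mathcal{F}_i)\subseteq\mathcal{C}(\mathcal{F}_{i+1})$ gives $z_i\le z_{i+1}$, and each $\mathcal{C}(\mathcal{F}_i)\subseteq E_i|H_i$ gives $z_i\le p_i$ (in particular $z_k\le p_k$).

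For sufficiency and for the extremal bounds $l_k=p_1$, $u_k=p_k$, I would iterate Proposition \ref{PROP-QC-GN} along the same associative decomposition. Given a coherent assessment $(p_1,\dots,p_k,z_2,\dots,z_{k-1})$, the inclusion $\mathcal{C}(\mathcal{F}_{k-1})\subseteq E_k|H_k$ and the absence of further logical relations place us exactly under the hypotheses of Proposition \ref{PROP-QC-GN}, with $A|H=\mathcal{C}(\mathcal{F}_{k-1})$ at probability $z_{k-1}$ and $B|K=E_k|H_k$ at probability $p_k$; hence $z_k=P[\mathcal{C}(\mathcal{F}_k)]$ is coherently extendible exactly over $[z_{k-1},p_k]$. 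Running this recursion upward from $z_2\in[p_1,p_2]$, the minimal admissible value of each $z_i$ is $p_1$ (all quasi conjunctions pushed to their lower bound) and the maximal value of $z_k$ is $p_k$, which gives $l_k=p_1$ and $u_k=p_k$.

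I expect the delicate point to be the precise description of the joint coherent region rather than the marginal bounds, which fall out cleanly. The recursion above produces the region $\{p_1\le z_2\le\cdots\le z_k\}\cap\{z_i\le p_i:\ i=2,\dots,k\}$, so one must take care over the intermediate caps $z_i\le p_i$: for $i<k$ these are not implied by $z_k\le p_k$ together with the monotonicity $z_i\le z_{i+1}$, so they should be carried explicitly when matching the stated compact form. The real work of the ``no further logical relations'' hypothesis is to guarantee that the nested ranges $[z_{i-1},p_i]$ are attained simultaneously and that no extra coupling among the $z_i$ arises from shared constituents; I would confirm this, as in the proof of Proposition \ref{PROP-QC-GN}, by exhibiting explicit convex combinations of the constituent points $Q_h$ that realize a prescribed tuple $(z_2,\dots,z_k)$ of the region.
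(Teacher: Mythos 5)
Your proposal is correct and follows essentially the same route as the paper: part (i) is exactly the paper's argument (iterated application of (\ref{QC-GN}) via the associativity of quasi conjunction, with the transitivity of the Goodman--Nguyen relation left implicit there), and part (ii) matches the paper's appeal to monotonicity of conditional probability under inclusion together with a constituent-based coherence argument in the style of Remark \ref{GN-REM} and Proposition \ref{PROP-QC-GN} --- the paper's own proof of (ii) is in fact a one-line remission to that reasoning, so your version is, if anything, more explicit about what must be verified. The ``intermediate caps'' $z_i \leq p_i$ that worry you are already encoded in the statement, since the displayed chain is quantified over $k=2,\ldots,n$ and hence holds for every truncation, which is exactly the region your recursion produces.
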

\begin{proof} (i) By iteratively applying (\ref{QC-GN}) and by the associative property of quasi conjunction, we have
$\mathcal{C}(\mathcal{F}_{k-1}) \subseteq \mathcal{C}(\mathcal{F}_k) \subseteq E_k|H_k \,,\; k=2, \ldots, n$; \\
(ii) by exploiting the logical relations in point (i), the assertions immediately follow by applying a reasoning similar to that in Remark \ref{GN-REM}.
\end{proof}

\subsection{Generalized Compound Probability Theorem}
In this subsection we generalize the result obtained in Subsection \ref{SUB-COMPOUND}. Given the family\\
$\mathcal{F} = \{A_1|H, A_2|A_1H, \ldots, A_n|A_1 \cdots A_{n-1}H\}$,
by iteratively exploiting the associative property, we have
\[
\mathcal{C}(\mathcal{F}) = \mathcal{C}(\mathcal{C}(A_1|H, A_2|A_1H), A_3|A_1A_2H, \ldots, A_n|A_1 \cdots A_{n-1}H) = \]
\[ = \mathcal{C}(A_1A_2|H, A_3|A_1A_2H, \ldots, A_n|A_1 \cdots A_{n-1}H) = \cdots = A_1A_2 \cdots A_n|H \,;
\]
thus, by the compound probability theorem, if the assessment $\mathcal{P} = (p_1,\ldots,p_n,z)$ on
$\mathcal{F} \cup \{\mathcal{C}(\mathcal{F})\}$ is coherent, then
\[
z=l=u=p_1p_2 \cdots p_n =T_P(p_1,p_2,\ldots,p_n).
\]
\section{Further Aspects on Quasi Conjunction: from Bounds on Conclusions to Bounds on Premises in Quasi And rule}
\label{FURTH-QC} In this section, we study the propagation of probability bounds on the conclusion of the Quasi And rule to its premises. We start with the case of two premises $A|H$ and $B|K$, by examining probabilistic aspects on the lower and upper bounds, $l$ and $u$, for the probability of the conclusion $\mathcal{C}(A|H, B|K)$. More precisely, given any number $\gamma \in [0,1]$, we find: \\
(i) the set $\mathcal{L}_{\gamma}$ of the coherent
assessments $(x,y)$ on $\{A|H, B|K\}$ such that, for each $(x,y) \in \mathcal{L}_{\gamma}$, one has $l \geq \gamma$; \\
(ii) the set $\mathcal{U}_{\gamma}$ of the coherent
assessments $(x,y)$ on $\{A|H, B|K\}$ such that, for each $(x,y) \in \mathcal{U}_{\gamma}$, one has $u \leq \gamma$. \\
Case (i). Of course, $\mathcal{L}_0 = [0,1]^2$;
hence we can assume $\gamma > 0$. It must be $l =$ max$\{x+y-1,0\} \geq \gamma$, i.e.,  $x+y \geq
1+\gamma$ (as $\gamma > 0$); hence $\mathcal{L}_{\gamma}$ coincides with
the triangle having the vertices $(1,1),
(1,\gamma), (\gamma,1)$; that is
\[\mathcal{L}_{\gamma} = \{(x,y): \gamma \leq x \leq 1,\, 1 + \gamma - x \leq y \leq 1\} \,. \]
Notice that $\mathcal{L}_1 = \{(1,1)\}$; moreover, for $\gamma \in
(0,1)$, $(\gamma, \gamma) \notin \mathcal{L}_{\gamma}$. \\
Case (ii). Of course, $\mathcal{U}_1 = [0,1]^2$; hence we can assume $\gamma < 1$. We recall that $u= S_0^H(x,y)$, then in order the inequality  $S_0^H(x,y) \leq \gamma$ be satisfied, it must be $x < 1$ and $y < 1$. Thus, $u \leq \gamma$  if and only if $ \frac{x+y-2xy}{1-xy} \leq  \gamma $. Given any $x<1, y<1$, we have
\begin{equation}\label{XYU}
u-x=\frac{y(1-x)^2}{1-xy} \geq 0 \,,\;\; u-y=\frac{x(1-y)^2}{1-xy} \geq 0 \,;
\end{equation}
then, from $u \leq \gamma$ it follows $x \leq
\gamma, y \leq \gamma$; hence $\mathcal{U}_{\gamma} \subseteq [0,\gamma]^2$. Then,
taking into account that $x \leq \gamma$ and hence
\[
1-(2-\gamma)x = 1-2x+\gamma x \geq 1 - 2x + x^2 = (1-x)^2 > 0 \,,
\] we have
\begin{equation}\label{UPGAM}\frac{x+y-2xy}{1-xy} \leq \gamma \; \Longleftrightarrow \; y
\leq \frac{\gamma - x}{1-(2-\gamma)x} \,;\end{equation} therefore
\[
\mathcal{U}_{\gamma}= \left\{(x,y) : \; 0 \leq x \leq \gamma \,,\;\; y \leq
\frac{\gamma -
x}{1-(2-\gamma)x} \right\} \,. \] \\
Notice that $\mathcal{U}_0 = \{(0,0)\}$; moreover, for $x=y=\gamma \in
(0,1)$, it is $u=\frac{2\gamma}{1+\gamma} > \gamma$; hence, for
$\gamma \in (0,1)$, $\mathcal{U}_{\gamma}$ is a strict subset of
$[0,\gamma]^2$. \\ Of course, for every $(x,y) \notin
\mathcal{L}_{\gamma} \cup \mathcal{U}_{\gamma}$, it is $l < \gamma < u$.  Figure \ref{fig6} displays the sets $\mathcal{L}_{\gamma}, \mathcal{U}_{\gamma}$ when $\gamma=0.6$.
\begin{figure}[!ht]
\begin{center}
\includegraphics[ scale=\myscale]{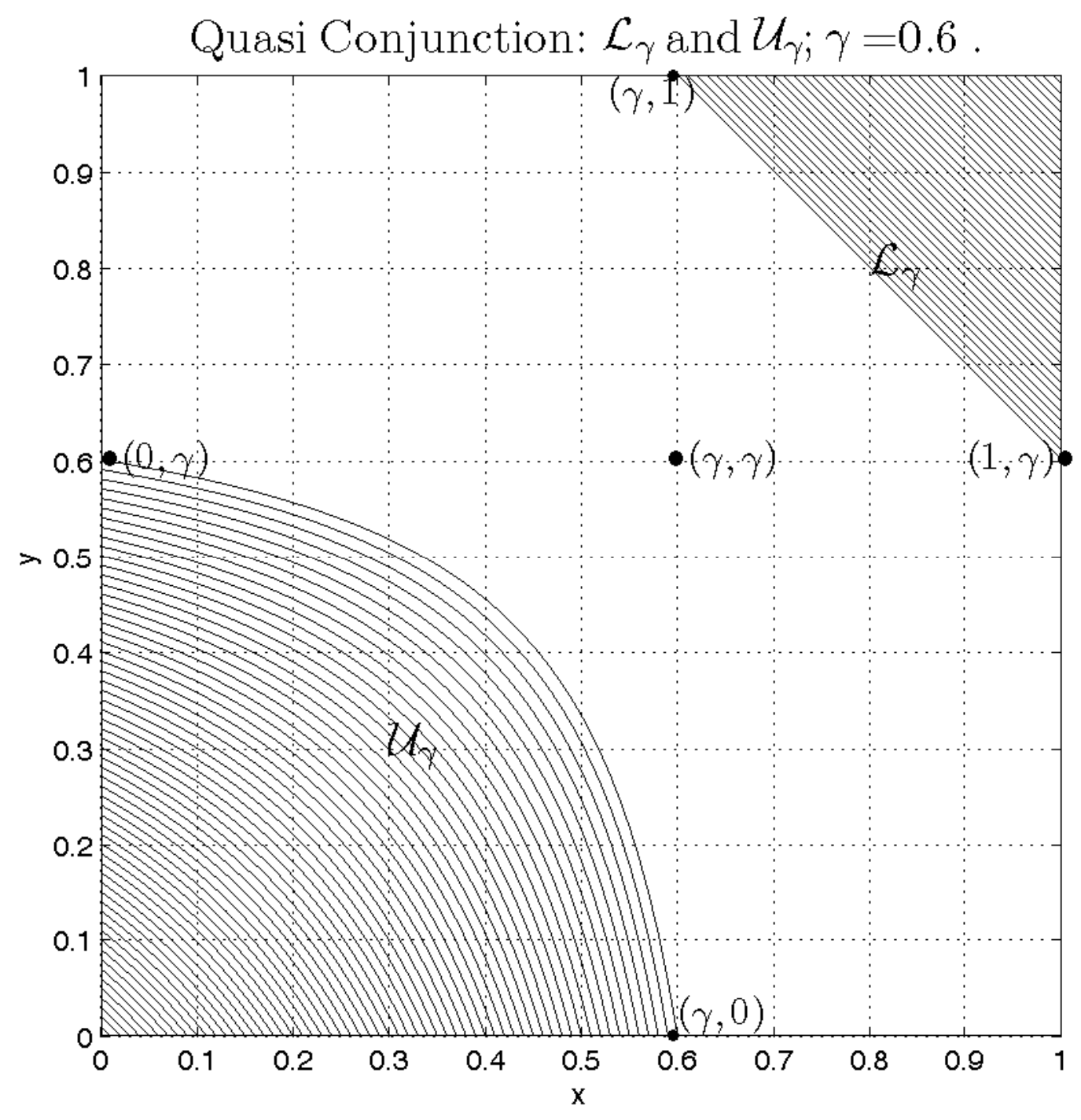}
\caption{The sets $\mathcal{L}_{\gamma}, \mathcal{U}_{\gamma}$.}
\label{fig6}
\end{center}
\end{figure}

In the next result we determine in general the sets $\mathcal{L}_{\gamma}, \mathcal{U}_{\gamma}$.
\begin{theorem}\label{LU-GEN} Given a coherent assessment $(p_1, p_2, \ldots,
p_n)$ on the family $\{E_1|H_1, \ldots, E_n|H_n\}$, where $E_1, H_1, \ldots, E_n, H_n$ are logically independent, we have
\begin{small}
\begin{equation}\label{LGAMMA}\begin{array}{l}
\mathcal{L}_{\gamma} = \{(p_1, \ldots, p_n) \in [0,1]^n : p_1 + \cdots +
p_n  \geq \gamma + n-1\} \,,\; \gamma > 0 \,,
\\ \\
\mathcal{U}_{\gamma} = \{(p_1, \ldots, p_n) \in [0,1]^n : 0 \leq p_1 \leq
\gamma \,,\;\; p_{k+1} \leq r_k
\,,\; k=1,\ldots,n-1\} \,,\; \gamma < 1 \,,
\end{array}
\end{equation}
\end{small}
where $r_k = \frac{\gamma - u_k}{1-(2-\gamma)u_k} \,,\, u_k = S_0^H(p_1,\ldots,p_{k})$, with $\mathcal{L}_0 = \mathcal{U}_1 = [0,1]^n$.
\end{theorem}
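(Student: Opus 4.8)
The plan is to reduce both assertions to the explicit formulas for $l_n$ and $u_n$ already established in Theorem \ref{LOW-UP-QC}, combined with the recursive structure $u_k = S_0^H(u_{k-1},p_k)$ used in its proof and the single-step analysis of $S_0^H$ carried out just above in (\ref{XYU})--(\ref{UPGAM}). The set $\mathcal{L}_{\gamma}$ will follow almost immediately from the Lukasiewicz expression, whereas $\mathcal{U}_{\gamma}$ will require an induction on $n$ that repeatedly invokes the two-variable computation. The already-treated case $n=2$ serves as the template for both descriptions.

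For $\mathcal{L}_{\gamma}$ I would recall from (\ref{LOWER-Lk}) that $l_n = T_L(p_1,\ldots,p_n)=\max(p_1+\cdots+p_n-(n-1),0)$. Since $\mathcal{L}_0=[0,1]^n$ trivially, assume $\gamma>0$; then $l_n\geq\gamma$ forces the nonzero branch of the maximum, so $l_n\geq\gamma$ is equivalent to $p_1+\cdots+p_n\geq\gamma+n-1$, which is exactly the stated description.

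For $\mathcal{U}_{\gamma}$ the key ingredient is the single-step equivalence extracted from the $n=2$ discussion: for $x<1$, $y<1$, equations (\ref{XYU}) and (\ref{UPGAM}) give
\[
S_0^H(x,y)\leq\gamma \;\Longleftrightarrow\; x\leq\gamma \ \text{and}\ y\leq\frac{\gamma-x}{1-(2-\gamma)x}\,,
\]
where necessity of $x\leq\gamma$ comes from $S_0^H(x,y)\geq x$ in (\ref{XYU}), and the boundary cases $x=1$ or $y=1$ are excluded because then $S_0^H=1>\gamma$. I would then induct on $n$ via the recursion $u_n=S_0^H(u_{n-1},p_n)$ from the proof of Theorem \ref{LOW-UP-QC}. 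The base case $n=1$ is $u_1=p_1\leq\gamma$. In the inductive step, applying the displayed equivalence with $x=u_{n-1}$ and $y=p_n$ shows that $u_n\leq\gamma$ holds if and only if $u_{n-1}\leq\gamma$ and $p_n\leq r_{n-1}$, with $r_{n-1}=\frac{\gamma-u_{n-1}}{1-(2-\gamma)u_{n-1}}$. Unfolding $u_{n-1}\leq\gamma$ by the induction hypothesis into $p_1\leq\gamma$ and $p_{k+1}\leq r_k$ for $k=1,\ldots,n-2$, and appending $p_n\leq r_{n-1}$, yields precisely the claimed form of $\mathcal{U}_{\gamma}$ (with $\mathcal{U}_1=[0,1]^n$).

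The delicate points are bookkeeping rather than substance, and they constitute the main obstacle. First, I must keep every denominator $1-(2-\gamma)u_k$ strictly positive; this follows exactly as in the text preceding the theorem, since $u_k\leq\gamma$ gives $1-(2-\gamma)u_k\geq(1-u_k)^2>0$. Second, I should verify the thresholds are consistent, namely $r_k\leq\gamma$, so that $p_{k+1}\leq r_k$ already guarantees $p_{k+1}\leq\gamma$; a short cross-multiplication reduces this to $(1-\gamma)^2\geq 0$. Finally, because $S_0^H$ is non-decreasing in each argument, the sequence $u_1\leq u_2\leq\cdots\leq u_n$ is monotone, which confirms that the nested system of step conditions is well posed and that requiring $u_n\leq\gamma$ is genuinely equivalent to the full chain, with no condition redundant.
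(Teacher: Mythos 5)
Your proposal is correct and follows essentially the same route as the paper's proof: the $\mathcal{L}_{\gamma}$ part via the Lukasiewicz formula (\ref{LOWER-Lk}), and the $\mathcal{U}_{\gamma}$ part via the recursion $u_{k+1}=S_0^H(u_k,p_{k+1})$ together with the necessity from (\ref{XYU}) and the iterated equivalence (\ref{UPGAM}); the paper merely writes your induction as an explicit chain of implications starting from $u_1=p_1$. Your added checks (positivity of the denominators $1-(2-\gamma)u_k$ and $r_k\leq\gamma$) are sound and slightly more careful than the text, but do not change the argument.
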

\begin{proof}
Of course, $\L_0 = [0,1]^n$, so that we can assume $\gamma >
0$. It must be
$l_n = \mbox{max}(p_1 + \cdots + p_n - (n-1), 0) \geq \gamma$, that
is, as $\gamma > 0$,
$p_1 + \cdots + p_n  \geq \gamma + n-1$.
Hence: $\L_{\gamma} = \{(p_1, \ldots, p_n) \in [0,1]^n : p_1 + \cdots +
p_n  \geq \gamma + n-1\}$. \\
We observe that $\L_{\gamma}$ is a convex polyhedron with vertices the points
\[
\begin{array}{ll}
V_1=(\gamma,1,\dots,1), \; V_2=(1, \gamma,1,\ldots,1), \; \cdots, \\
\; V_n=(1, \ldots,1,\gamma), \; V_{n+1}=(1,1,\ldots, 1) \,.
\end{array}
\]
Moreover, the convex hull of the vertices $V_1,\ldots,V_n$ is the subset of the points $(p_1, \ldots, p_n)$
of $\L_{\gamma}$ such that $l_n=\gamma$, that is such that $p_1 + \cdots +
p_n = \gamma+n-1$. \\
Now, let us determine the set $\U_{\gamma}$. Of course, $\U_1 = [0,1]^n$, so that we can assume $\gamma <
1$. We recall that $u_2, \ldots, u_n$ are the upper bounds on
$\C(\F_2), \ldots, \C(\F_n)$ associated with $(p_1,\ldots,p_n)$. Then,  from the relations
\[\C(\F_{k+1}) = \C(\C(\F_k), E_{k+1}|H_{k+1}) \,,\; k=2,\ldots,n-1 \,, \] by applying (\ref{XYU}) with $x=u_k, y=p_{k+1}$, we have that in order the inequality $u_{k+1} \leq \gamma$ be satisfied,it must be $u_k \leq \gamma, p_{k+1} \leq \gamma,\, k=2,\ldots,n-1$. Therefore
\[
u_n \leq \gamma \; \Longrightarrow \; p_1  \leq \gamma, \ldots, p_n  \leq \gamma, u_2  \leq \gamma, \ldots, u_{n-1}  \leq \gamma \,,
\]
so that $\U_{\gamma} \subseteq [0,\gamma]^n$. By iteratively
applying (\ref{UPGAM}), we obtain
\[
0 \leq p_1 \leq \gamma \,,\;\; p_2 \leq \frac{\gamma -
p_1}{1-(2-\gamma)p_1} \;\;\; \Longrightarrow \;\;\; u_2 \leq
\gamma \,,
\]
\[
0 \leq u_2 \leq \gamma \,,\;\; p_3 \leq \frac{\gamma -
u_2}{1-(2-\gamma)u_2} \;\;\; \Longrightarrow \;\; u_3 \leq \gamma
\,,
\]
\[
\vdots
\]
\[
0 \leq u_{n-1} \leq \gamma \,,\;\; p_n \leq \frac{\gamma -
u_{n-1}}{1-(2-\gamma)u_{n-1}} \;\;\; \Longrightarrow \;\; u_n \leq
\gamma \,.
\]\\
Therefore, observing that $u_1 = p_1$,
\begin{small}
\[
\U_{\gamma} = \{(p_1, \ldots, p_n) \in [0,1]^n : 0 \leq p_1 \leq
\gamma \,,\;\; p_{k+1} \leq \frac{\gamma - u_k}{1-(2-\gamma)u_k}
\,,\; k=1,\ldots,n-1\} \,.
\]
\end{small}
We observe that $\U_0 = \{(0, \ldots,0)\}$; moreover, for
$p_1=\cdots =p_n=\gamma \in (0,1)$, we obtain (by induction)
\[u_2=\frac{2\gamma}{1+\gamma} > \gamma \,,\;
u_3=\frac{3\gamma}{1+2\gamma} > \gamma \,,\; \cdots \,,\;
u_n=\frac{n\gamma}{1+(n-1)\gamma} > \gamma \,; \] hence, for
$\gamma \in (0,1)$, $\U_{\gamma}$ is a strict subset of
$[0,\gamma]^n$. \\
Of course, for every $(p_1,\ldots,p_n) \notin \L_{\gamma} \cup
\U_{\gamma}$, it is $l_n < \gamma < u_n$. As an example, for
$p_1=\cdots =p_n=\gamma \in (0,1)$, one has
\[
l_n = \mbox{max}(n\gamma - (n-1), 0) \;<\; \gamma \;<\; u_n =
\frac{n\gamma}{1+(n-1)\gamma} \,.
\]
\end{proof}
\section{Lower and Upper Bounds for Quasi Disjunction}
\label{SEC-QD}
We recall below the notion of quasi disjunction of conditional events  as defined in \cite{Adam75}.
\begin{definition}\label{QD}{\rm
Given any events $A, H, B, K$, with $H \neq
\emptyset, K \neq \emptyset$, the quasi disjunction of the
conditional events $A|H$ and $B|K$ is
the conditional event $\mathcal{D}(A|H,B|K) = (AH  \vee BK)|(H \vee K)$.
}\end{definition}
The constituents generated  by the family
$\{A|H, B|K, \D(A|H,B|K)\}$ and the corresponding points $Q_h$'s
are given in columns 2 and 8  of Table \ref{TABLE-QC}. In general, given a family of $n$ conditional events $\mathcal{F}_n=\{E_i|H_i,\, i=1,\ldots,n\}$, it is $\mathcal{D}(\mathcal{F}_n)=\mathcal{D}(E_1|H_1,\ldots,E_n|H_n) = (\bigvee_{i=1}^n E_iH_i)\big |(\bigvee_{i=1}^n H_i)$.
Quasi disjunction is associative; that is, for every subset $J \subset \{1,\ldots,n\}$, we have
$\mathcal{D}(\mathcal{F}_n)$ $=\mathcal{D}(\mathcal{F}_J \cup \mathcal{F}_{\Gamma})$ $= \mathcal{D}[\mathcal{D}(\mathcal{F}_J), \mathcal{D}(\mathcal{F}_{\Gamma})]$, where $\Gamma = \{1,\ldots,n\} \setminus J$.
\begin{remark}\label{REM-TS}
We recall that the quasi conjunction of $A|H$ and $B|K$ can also be written as $\C(A|H,B|K) = (A \lor H^c)\land(B \lor K^c)|(H \lor K)$; then, based on the usual negation operation $(E|H)^c = E^c|H$, it holds that
\begin{equation}\label{DUAL-QC-QD}\begin{array}{l}
[\C(A^c|H,B^c|K)]^c = [(A^c \lor H^c)\land(B^c \lor K^c)|(H \lor K)]^c = \\ = (AH \lor BK)|(H \lor K) = \D(A|H,B|K) \,,
\end{array}\end{equation}
which represents the De Morgan duality between quasi conjunction and quasi disjunction.
We also have
$\D(A|H,B|K) \, \vee \, \C(A^c|H,B^c|K) = \Omega|(H \vee K)$ and $\D(A|H,B|K) \, \wedge \, \C(A^c|H,B^c|K) = \emptyset|(H\vee K)$.
From (\ref{DUAL-QC-QD}) it follows
\begin{equation}\label{QD=1-QC}
P[\D(A|H,B|K)]=1-P[\C(A^c|H,B^c|K)] \,,
\end{equation}
which will be exploited in the next result.
\end{remark}
\begin{proposition}
\label{LOW-UP-QD-TWO}
{\rm
Assuming $A, H, B, K$ logically independent, we have: \\ (i) the probability assessment $(x,y)$ on $\{A|H, B|K\}$ is coherent for every $(x,y) \in [0,1]^2$; \\ (ii) given a coherent
assessment $(x,y)$ on $\{A|H, B|K\}$, the assessment $\mathcal{P} = (x,y,z)$ on
$\mathcal{F} = \{A|H, B|K, \mathcal{D}(A|H,B|K)\}$, with $z = P[\mathcal{D}(A|H,B|K)]$, is
a coherent extension of $(x,y)$ if and only  if  $z\in[l,u]$, where
\[
l=T_0^H(x,y)=\left\{\begin{array}{ll} \frac{xy}{x+y-xy}, & (x,y) \neq
(0,0), \\ 0, & (x,y)=(0,0),
\end{array} \right. \;\;\; u=S_L(x,y) = \min(x+y,1)\,.
\]
}\end{proposition}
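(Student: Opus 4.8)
The plan is to avoid any fresh geometric computation and instead exploit the De~Morgan duality recorded in Remark~\ref{REM-TS}, which was set up precisely for this use. Part~(i) needs no new argument: the coherence of $(x,y)$ on $\{A|H,B|K\}$ for every $(x,y)\in[0,1]^2$ is a statement about the two conditional events alone and is identical to statement~(i) accompanying (\ref{EQ-LOW-UP-QC}), which holds whenever $A,H,B,K$ are logically independent. Thus the whole content lies in part~(ii), and for this I would transfer the quasi-conjunction bounds through the identity (\ref{QD=1-QC}), $P[\D(A|H,B|K)] = 1 - P[\C(A^c|H,B^c|K)]$.

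The key step, and the only one deserving care, is that coherence is preserved when we simultaneously negate each event and complement each probability value. First note that $A^c,H,B^c,K$ are logically independent exactly when $A,H,B,K$ are, since negation leaves the number of generated atoms unchanged. Then, using the geometric characterization of Section~\ref{PRELIM}, the family $\{A^c|H,B^c|K,\C(A^c|H,B^c|K)\}$ generates the same partition of $\Omega$ as $\{A|H,B|K,\D(A|H,B|K)\}$, and by (\ref{DUAL-QC-QD}) the points associated with the former (checked row by row against column~8 of Table~\ref{TABLE-QC}) are obtained from the latter under the coordinatewise reflection $c\mapsto 1-c$. Since an affine map carries one convex hull onto the other, $\P=(x,y,z)$ is a coherent extension for the quasi disjunction if and only if $(1-x,1-y,1-z)$ is a coherent extension for the quasi conjunction of $\{A^c|H,B^c|K\}$.

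Applying (\ref{EQ-LOW-UP-QC}) to the coherent assessment $(1-x,1-y)$, the coherent values of $z'=P[\C(A^c|H,B^c|K)]$ fill $[\,T_L(1-x,1-y),\,S_0^H(1-x,1-y)\,]$, so via $z=1-z'$ the coherent values of $z$ fill $[\,1-S_0^H(1-x,1-y),\,1-T_L(1-x,1-y)\,]$. It then remains only to simplify the endpoints. The upper bound is $1-T_L(1-x,1-y)=1-\max(1-x-y,0)=\min(x+y,1)=S_L(x,y)$. For the lower bound a short computation gives $S_0^H(1-x,1-y)=\frac{x+y-2xy}{x+y-xy}$ for $(x,y)\neq(0,0)$, so that $1-S_0^H(1-x,1-y)=\frac{xy}{x+y-xy}=T_0^H(x,y)$; the degenerate case $(x,y)=(0,0)$ forces $(1-x,1-y)=(1,1)$, where $S_0^H(1,1)=1$ and hence $l=0$, as required. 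I expect the arithmetic to be entirely routine, the one genuinely substantive point being the coherence-preserving reflection of the convex hull described above.
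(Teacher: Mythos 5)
Your proof is correct and takes essentially the same route as the paper's: there too the bounds are obtained by applying (\ref{EQ-LOW-UP-QC}) to the assessment $(1-x,1-y)$ on $\{A^c|H,B^c|K\}$, transferring them through the duality (\ref{QD=1-QC}), and simplifying the endpoints via the t-norm/t-conorm duality of Appendix A, yielding $l=1-S_0^H(1-x,1-y)=T_0^H(x,y)$ and $u=1-T_L(1-x,1-y)=S_L(x,y)$. Your explicit justification of the coherence-preserving reflection $(x,y,z)\mapsto(1-x,1-y,1-z)$ via the affine map of the convex hull is a point the paper leaves implicit, but it does not change the argument.
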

\begin{proof} We observe that, by (\ref{EQ-LOW-UP-QC}), the extension $\gamma = P[\C(A^c|H,B^c|K)]$ of the assessment $P(A|H)=x, P(B|K)=y$ is coherent if and only if $\gamma' \leq \gamma \leq \gamma''$, where $\gamma'=T_L(1-x,1-y),\, \gamma''= S_0^H(1-x,1-y).$ Then, based on (\ref{QD=1-QC}) and on the results given in Appendix A, it follows that
\[
l=1-S_0^H(1-x,1-y)=T_0^H(x,y) \,,\;\; u=1-T_L(1-x,1-y)=S_L(x,y) \,.
\]
\end{proof}
In Figure \ref{fig2} is shown the convex hull $\I$ for given values $x,y$, with the associated interval $[l,u]$ of coherent extensions $z=P[\D(A|H,B|K)]$. As for quasi conjunction, the convex hull $\I$ does not depend on $z$. In the next subsections we examine some particular cases.
\begin{figure}[!ht]
\begin{center}
\includegraphics[ scale=\myscale]{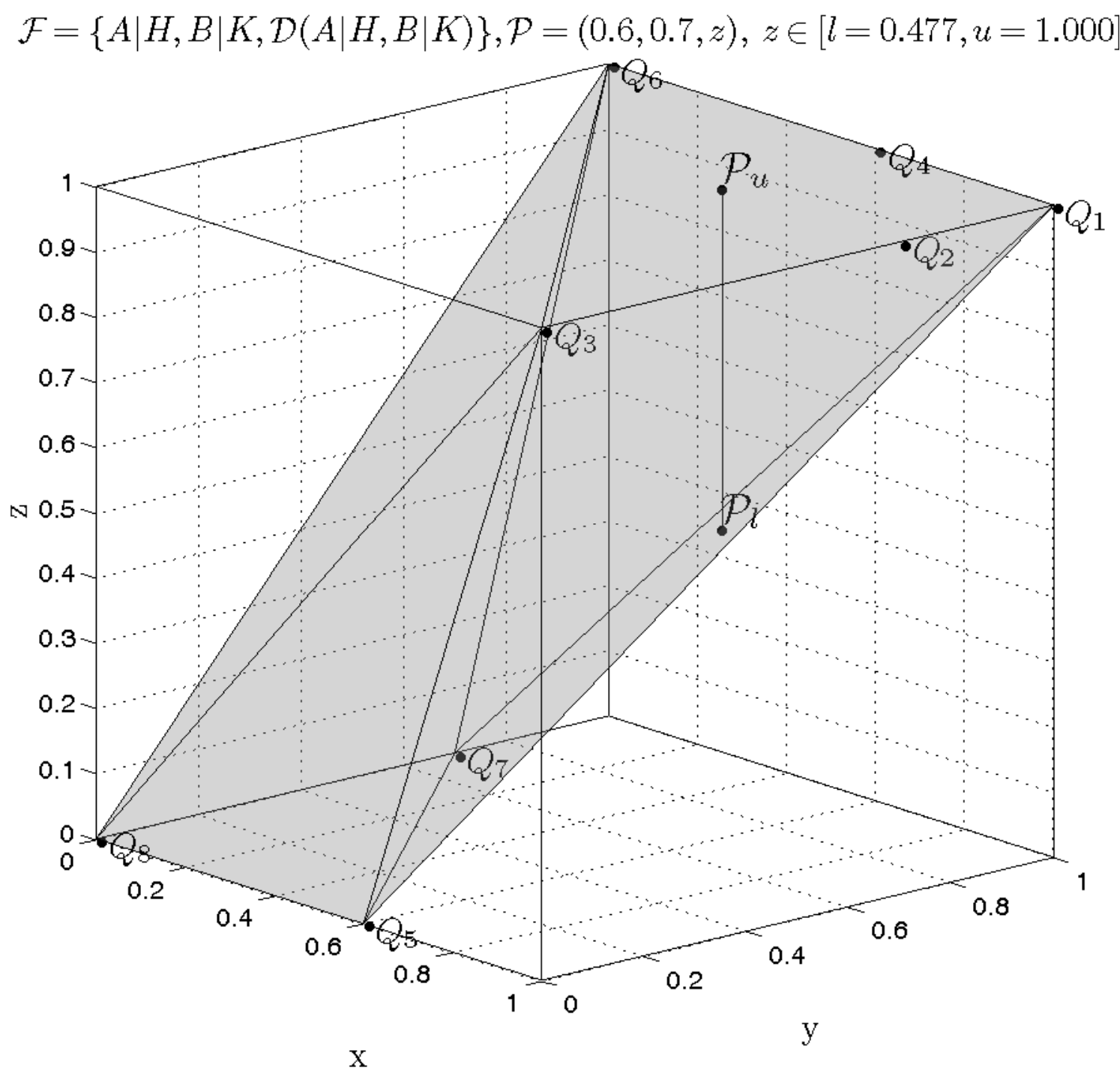}
\caption{The convex hull $\I$  associated with the pair $(\F,\P)$ in case of quasi disjunction without logical relations.
The  interval $[l,u]$ for $z=P[\D(A|H,B|K)]$ is the range of the third coordinate $z$ of each $\P\in \overline{\P_l\P_u}=\{(x,y,z):z\in[T_0^H(x,y),S_{L}(x,y)]\}$.
The segment $\overline{\P_l\P_u}$  is the intersection  between the segment $\{(x,y,z):z\in [0,1]\}$  and the convex hull $\I$.}
\label{fig2}
\end{center}
\end{figure}
\subsection{The Dual of  Compound Probability Theorem}
Given any logically independent events $A,\,B,\, H$, with $A^cH \neq \emptyset$, the assessment $(x,y)$ on $\{A|H, B|A^cH \}$ is coherent, for every $(x,y)\in[0,1]^2$.  We have $\mathcal{D}(A|H,B|A^cH) = (A\vee B)|H$ and, defining $z = P(A\vee B|H)$, by (\ref{QD=1-QC}) and by the results in Subsection \ref{SUB-COMPOUND},  we have
\[
\begin{array}{ll}
z=P(\mathcal{D}(A|H,B|A^cH))=1-P(\mathcal{C}(A^c|H,B^c|A^cH))=\\=
1-T_P(1-x,1-y)=x+y-xy=S_P(x,y) \,;
\end{array}
\]
that is $z$ is equal to the {\em probabilistic sum} of $x,y$.

\subsection{The case $A|H\subseteq B|K$}
\label{GNQD-SUB}
From $A|H \subseteq B|K$ we have $\mathcal{D}(A|H,B|K)=(BK) \,|\, (H \vee K)$. Then,
as shown by Table \ref{TABLE-QC}  and by Remark \ref{GN-REM}), it holds that
\[
t(A|H)\leq t(\mathcal{C}(A|H,B|K) )\leq t(\mathcal{D}(A|H,B|K) )\leq  t(B|K) \,.
\]
Then: $A|H \subseteq B|K$ implies $A|H \subseteq \mathcal{C}(A|H,B|K) \subseteq \mathcal{D}(A|H,B|K) \subseteq B|K$.
We recall that, by Remark \ref{REM-GN}, $A|H \subseteq B|K$ amounts to $B^c|K \subseteq A^c|H$; then, given the assessment $P(A|H)=x, P(B|K)=y$, where $x \leq y$, by applying Proposition~\ref{PROP-QC-GN} to the family $\{B^c|K, A^c|H\}$, the extension $\gamma = P[\C(B^c|K, A^c|H)]$ of $(x,y)$ is coherent if and only if $\gamma' \leq \gamma \leq \gamma''$, where $\gamma'=1-y,\, \gamma''= 1-x$. Then, by (\ref{QD=1-QC}), the extension $z = P[\mathcal{D}(A|H,B|K)]$ of $(x, y)$ is coherent if and only if $l \leq z \leq u$, where $l = x = \min(x,y) \,,\; u = y = \max(x,y)$.
\subsection{Quasi Conjunction, Quasi Disjunction and Or Rule.}
We recall that in Or rule with premises $H\normally A$ and $K\normally A$ the conclusion is  $H\vee K\normally A$.  Moreover, for the conditional events $A|H$ and $A|K$ associated with the premises, we have
\[
\C(A|H,A|K) = \D(A|H,A|K)= A|(H \vee K)  \,,
\]
which is the conditional event associated with the conclusion $H\vee K\normally A$ of Or rule.
In \cite{Gili02} it has been proved that, under logical independence of $A, H, K$, the assessment $z=P(A|(H \vee K)$ is a coherent extension of the assessment $(x,y)$ on $\{A|H, A|K\}$ if and only if $z\in [l,u]$, with
\begin{equation}\label{EQ-LOW-UP-OR}
l=T_0^H(x,y) \,,\;\;\;  u=S_0^H(x,y) \,.
\end{equation}
The convex hull $\I$ for  given values $x,y$ and the associated interval $[l,u]$ for $z=P[\D(A|H,A|K)]$ are shown in Figure \ref{fig8}.
\begin{figure}[!ht]
\begin{center}
\includegraphics[ scale=\myscale]{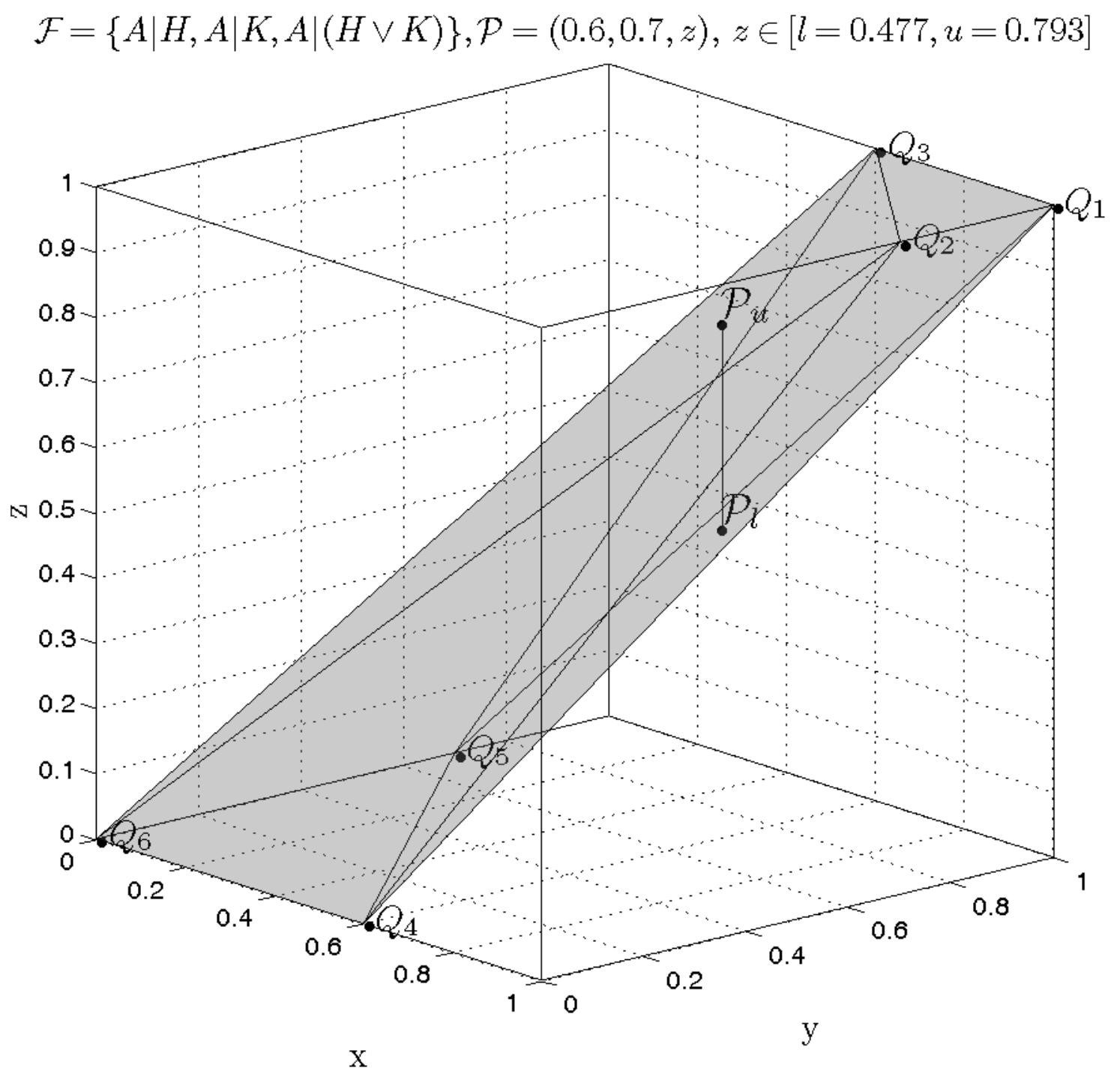}
\caption{Convex hull $\I$  associated with the pair $(\F,\P)$ for  the  Or rule. The  interval $[l,u]$ for $z=P[\D(A|H,A|K)]=P[\C(A|H,A|K)]$ is the range of the third coordinate $z$ of each $\P\in\overline{\P_l\P_u}=\{(x,y,z):z\in[T_0^H(x,y),S_{0}^{H}(x,y)]\}$. The segment $\overline{\P_l\P_u}$  is the intersection  between the segment $\{(x,y,z):z\in [0,1]\}$  and the convex hull $\I$.}
\label{fig8}
\end{center}
\end{figure}
\subsection{Lower and Upper Bounds for the Quasi Disjunction of $n$ Conditional Events}
 Given the family $\mathcal{F}_n = \{E_1|H_1, \ldots, E_n|H_n\}$, let us consider
the quasi disjunction $\mathcal{D}(\mathcal{F}_n)$ of the conditional events in
$\mathcal{F}_n$. By the associative property of quasi disjunction,
defining $\mathcal{F}_k = \{E_1|H_1, \ldots, E_k|H_k\}$, for each $k=2,
\ldots,n$ it holds that
 $\mathcal{D}(\mathcal{F}_k) = \mathcal{D}(\mathcal{D}(\mathcal{F}_{k-1}), E_k|H_k)$. Then, denoting by $T_0^H$  the Hamacher t-norm with parameter $\lambda=0$ and by $S_L$ the Lukasiewicz t-conorm (see \ref{SEZ-TNORMN} and \ref{HAM-NORM-CON}), we have
\begin{theorem}\label{LOW-UP-QD} {\rm Given a probability assessment
$\mathcal{P}_n = (p_1, p_2, \ldots, p_n)$ on $\mathcal{F}_n = \{E_1|H_1, \ldots,
E_n|H_n\}$, let $[l_k,u_k]$ be
the interval of coherent extensions of the assessment $\mathcal{P}_k = (p_1, p_2, \ldots, p_k)$ on
the quasi disjunction $\mathcal{D}(\mathcal{F}_k)$, where $\mathcal{F}_k = \{E_1|H_1, \ldots,
E_k|H_k\}$. Then, assuming $E_1, H_1, \ldots, E_n, H_n$ logically independent,
for each $k=2,\ldots,n$, we have
\[
l_k = T_0^H(p_1,p_2, \ldots, p_k) \,,\;\;\; u_k = S_L(p_1,p_2, \ldots,
p_k) \,.
\]
}
\end{theorem}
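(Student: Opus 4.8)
The plan is to prove Theorem~\ref{LOW-UP-QD} by \emph{duality}, reducing it to the already established bounds for quasi conjunction in Theorem~\ref{LOW-UP-QC}, in the same spirit in which Proposition~\ref{LOW-UP-QD-TWO} reduces the two-event quasi-disjunction bounds to the two-event quasi-conjunction bounds. The first step is to extend the De Morgan duality (\ref{DUAL-QC-QD}) from two to $k$ conditional events. Using $E_i^cH_i \vee H_i^c = (E_iH_i)^c$ together with the $n$-ary definitions of $\mathcal{C}$ and $\mathcal{D}$ (or, equivalently, by induction on $k$ via the associativity of both operations), I would first establish
\[
\mathcal{D}(E_1|H_1,\ldots,E_k|H_k) = [\mathcal{C}(E_1^c|H_1,\ldots,E_k^c|H_k)]^c \,,
\]
so that, generalizing (\ref{QD=1-QC}), one has $P[\mathcal{D}(\mathcal{F}_k)] = 1 - P[\mathcal{C}(E_1^c|H_1,\ldots,E_k^c|H_k)]$.

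Next I would pass to the negated family. Since $P(E_i^c|H_i)=1-p_i$ and coherence is preserved under complementation of every conditional event in an assessment, coherence of $(p_1,\ldots,p_k)$ on $\{E_1|H_1,\ldots,E_k|H_k\}$ is equivalent to coherence of $(1-p_1,\ldots,1-p_k)$ on $\{E_1^c|H_1,\ldots,E_k^c|H_k\}$, and the family $E_1^c,H_1,\ldots,E_k^c,H_k$ remains logically independent. Applying Theorem~\ref{LOW-UP-QC} to this negated family, the quantity $\gamma_k = P[\mathcal{C}(E_1^c|H_1,\ldots,E_k^c|H_k)]$ admits as coherent extensions exactly the interval $[T_L(1-p_1,\ldots,1-p_k),\,S_0^H(1-p_1,\ldots,1-p_k)]$. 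Because the correspondence $z_k = 1-\gamma_k$ is an affine bijection of $[0,1]$ preserving coherence, the coherent extensions of $z_k = P[\mathcal{D}(\mathcal{F}_k)]$ fill the reversed interval
\[
[\,1 - S_0^H(1-p_1,\ldots,1-p_k),\; 1 - T_L(1-p_1,\ldots,1-p_k)\,] \,.
\]

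It then remains only to identify these endpoints with the claimed operations. I would invoke the duality between each t-norm and its dual t-conorm, namely $S_0^H(\mathbf{x}) = 1 - T_0^H(1-\mathbf{x})$ and $S_L(\mathbf{x}) = 1 - T_L(1-\mathbf{x})$, valid in the $n$-ary case since duality is inherited by the associative extension (the explicit formulas are collected in \ref{SEZ-TNORMN} and \ref{HAM-NORM-CON}). These yield at once $l_k = 1 - S_0^H(1-p_1,\ldots,1-p_k) = T_0^H(p_1,\ldots,p_k)$ and $u_k = 1 - T_L(1-p_1,\ldots,1-p_k) = S_L(p_1,\ldots,p_k)$, which is the assertion.

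The delicate point in any direct treatment would be the \emph{fill-in} property, i.e.\ verifying that every value of the interval (not merely its endpoints) is a coherent extension at each stage; this is precisely the step handled implicitly in the proof of Theorem~\ref{LOW-UP-QC} through the monotonicity of the bound functions in their first argument. The advantage of the duality route is that this property is inherited for free from Theorem~\ref{LOW-UP-QC}, since complementation is a coherence-preserving bijection. A fully self-contained alternative would instead mimic the induction of Theorem~\ref{LOW-UP-QC}, using $\mathcal{D}(\mathcal{F}_k) = \mathcal{D}(\mathcal{D}(\mathcal{F}_{k-1}),E_k|H_k)$, Proposition~\ref{LOW-UP-QD-TWO}, and the fact that $T_0^H(x,p_k)$ and $S_L(x,p_k)$ are non-decreasing in $x$; I expect the monotonicity verification to be the only non-routine ingredient there.
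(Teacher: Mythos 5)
Your proof is correct, but it takes a genuinely different route from the paper's. The paper proves Theorem~\ref{LOW-UP-QD} by repeating, verbatim in structure, the induction of Theorem~\ref{LOW-UP-QC}: the base case $l_2=T_0^H(p_1,p_2)$, $u_2=S_L(p_1,p_2)$ is taken from Proposition~\ref{LOW-UP-QD-TWO}, and the inductive step uses the associativity $\mathcal{D}(\mathcal{F}_k)=\mathcal{D}(\mathcal{D}(\mathcal{F}_{k-1}),E_k|H_k)$ together with the monotonicity of $T_0^H(x,p_k)$ and $S_L(x,p_k)$ in $x$ --- exactly the ``fully self-contained alternative'' you sketch in your closing paragraph. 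Your primary route instead lifts the two-event De Morgan duality (\ref{DUAL-QC-QD}) and the probability relation (\ref{QD=1-QC}) to the $k$-ary level, $\mathcal{D}(\mathcal{F}_k)=[\mathcal{C}(E_1^c|H_1,\ldots,E_k^c|H_k)]^c$ (a one-line computation from $(E_i^cH_i\vee H_i^c)^c=E_iH_i$), and then transfers the whole of Theorem~\ref{LOW-UP-QC} through the complementation map $p_i\mapsto 1-p_i$, $z\mapsto 1-z$, identifying the endpoints via the $k$-ary dualities $T_0^H(\mathbf{p})=1-S_0^H(\mathbf{1}-\mathbf{p})$ and $S_L(\mathbf{p})=1-T_L(\mathbf{1}-\mathbf{p})$ recorded in \ref{SEZ-TNORMN} and \ref{HAM-NORM-CON}. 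This is in fact the natural $k$-ary generalization of the paper's own proof of Proposition~\ref{LOW-UP-QD-TWO}, and it buys you something the paper's proof obtains only by redoing the induction: the fill-in property of the whole interval is inherited at once from Theorem~\ref{LOW-UP-QC}. What it costs you is the one assertion you state without proof, namely that complementing every conditional event preserves coherence; this is true and standard, but in a referee-proof write-up you should justify it in a line, e.g.\ via the geometric characterization of Theorem~\ref{CNES}: complementing $E_j$ sends each coordinate $q_{hj}\in\{1,0,p_j\}$ of the points $Q_h$ to $1-q_{hj}$, so the condition $\mathcal{P}_n\in\mathcal{I}$ is preserved by the affine involution $\mathbf{x}\mapsto\mathbf{1}-\mathbf{x}$ (and, since the conditioning events $H_j$ are unchanged, the subassessment conditions of the coherence characterization transform consistently). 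You should also note explicitly, as you do, that $E_1^c,H_1,\ldots,E_k^c,H_k$ remain logically independent, so Theorem~\ref{LOW-UP-QC} indeed applies to the negated family. With that line filled in, your duality argument is complete and arguably tidier than the paper's.
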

\begin{proof}
Of course, from Proposition \ref{LOW-UP-QD-TWO} it is $l_2=T_0^H(p_1,p_2)$ and $u_2=S_L(p_1,p_2)$.  The rest of the proof is similar to that one in Theorem \ref{LOW-UP-QC}.
\end{proof}
\begin{remark}\label{REM-C-IMPLIES-D}
Given any conditional events $A|H$ and $B|K$, as shown in
Table \ref{TABLE-QC}, it holds that
$t(\C(A|H,B|K))\leq t(\D(A|H,B|K))$, which amounts to $\C(A|H,B|K))\subseteq \D(A|H,B|K)$.
In  general, given a finite  family of conditional events $\F_n$, we have $t(\C(\F_n))\leq t(\D(\F_n))$, that is $\C(\F_n) \subseteq \D(\F_n)$, so that $P[\C(\F_n)] \leq P[\D(\F_n)]$. Thus, if the family $\mathcal{F}_n$ is p-consistent, then $\F_n \Rightarrow_p \C(\F_n)\Rightarrow_p \D(\F_n)$ and we obtain the following \textit{Quasi Or} rule
\begin{equation}
\begin{array}{llll}
\mbox{(Quasi Or)} &  \hspace{2.5cm} & \F_n \Rightarrow_p  \D(\F_n) \;. &\hspace{3.5cm}
\end{array}
\end{equation}
We observe that Quasi Or rule also follows directly from Theorem \ref{LOW-UP-QD}.
\end{remark}
\subsection{General Or Rule}
Let us consider the general Or rule (see \cite{Gili12}), where the premises are the conditional events $E|H_1, \ldots, E|H_n$ and the conclusion is the conditional event $E|(H_1\vee H_2 \vee \ldots, \vee H_n)$. By the associative property of quasi disjunction,
defining $\mathcal{F}_k = \{E|H_1, \ldots, E|H_k\}$, for each $k=2,
\ldots,n$ it holds that
 \[
 \mathcal{D}(\mathcal{F}_k) = \mathcal{D}(\mathcal{D}(\mathcal{F}_{k-1}), E_k|H_k)=E|(H_1\vee H_2 \vee \ldots, \vee H_k) \,.
 \]
 We also observe that  $\mathcal{D}(\mathcal{F}_k)=\mathcal{C}(\mathcal{F}_k)$.
 Then,  by exploiting the notions of t-norm, t-conorm, quasi disjunction and quasi conjunction, Theorem 9 in \cite{Gili12} can be written as
\begin{theorem}\label{LOW-UP-OR} {\rm Given a probability assessment
$\mathcal{P}_n = (p_1, p_2, \ldots, p_n)$ on $\mathcal{F}_n = \{E|H_1, E|H_2, \ldots,
E|H_n\}$, let $[l_k,u_k]$ be
the interval of coherent extensions of the assessment $\mathcal{P}_k = (p_1, p_2, \ldots, p_k)$ on
the quasi disjunction $\mathcal{D}(\mathcal{F}_k)$, where $\mathcal{F}_k = \{E|H_1, \ldots,
E|H_k\}$. Then, assuming $E, H_1, \ldots, H_n$ logically independent,
for each $k=2,\ldots,n$, we have
\[
l_k = T_0^H(p_1,p_2, \ldots, p_k) \,,\;\;\;
u_k = S_0^H(p_1,p_2, \ldots, p_k) \,.
\]}
\end{theorem}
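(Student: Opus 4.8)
The plan is to follow the same inductive strategy used in the proof of Theorem \ref{LOW-UP-QC}, exploiting the associativity of quasi disjunction together with the associativity and monotonicity of the Hamacher t-norm $T_0^H$ and t-conorm $S_0^H$. The base case $k=2$ is immediate: since $\mathcal{D}(E|H_1,E|H_2)=E|(H_1\vee H_2)$ is the conclusion of the Or rule, formula (\ref{EQ-LOW-UP-OR}) gives directly $l_2=T_0^H(p_1,p_2)$ and $u_2=S_0^H(p_1,p_2)$.

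For the inductive step I would use the relation $\mathcal{D}(\mathcal{F}_k)=\mathcal{D}(\mathcal{D}(\mathcal{F}_{k-1}),E|H_k)$. The crucial point (and the step I expect to be the main obstacle to state cleanly) is that $\mathcal{D}(\mathcal{F}_{k-1})=E|(H_1\vee\cdots\vee H_{k-1})$ has the \emph{same} consequent $E$ as $E|H_k$; hence combining these two conditional events is again an instance of the Or rule, not of the generic quasi disjunction of Proposition \ref{LOW-UP-QD-TWO}. This is precisely why the upper bound is governed by $S_0^H$ rather than by $S_L$. To apply (\ref{EQ-LOW-UP-OR}) at this stage I must verify that $E$, $H_1\vee\cdots\vee H_{k-1}$, and $H_k$ satisfy the logical independence hypothesis required there; this follows from the logical independence of $E,H_1,\ldots,H_n$, since the disjunction $H_1\vee\cdots\vee H_{k-1}$ and the distinct event $H_k$ remain logically free, as does $E$ with respect to both.

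Writing $x=P[\mathcal{D}(\mathcal{F}_{k-1})]$, the induction hypothesis guarantees that $x$ ranges over the whole interval $[l_{k-1},u_{k-1}]=[T_0^H(p_1,\ldots,p_{k-1}),S_0^H(p_1,\ldots,p_{k-1})]$, and that for each such $x$ the coherent extension $z=P[\mathcal{D}(\mathcal{F}_k)]$ satisfies $T_0^H(x,p_k)\leq z\leq S_0^H(x,p_k)$ by (\ref{EQ-LOW-UP-OR}). Since both $T_0^H(\cdot,p_k)$ and $S_0^H(\cdot,p_k)$ are continuous and non-decreasing in their first argument, the union over $x\in[l_{k-1},u_{k-1}]$ of the intervals $[T_0^H(x,p_k),S_0^H(x,p_k)]$ is again a single interval, whose endpoints are attained at $x=l_{k-1}$ and $x=u_{k-1}$ respectively. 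Hence $l_k=T_0^H(l_{k-1},p_k)$ and $u_k=S_0^H(u_{k-1},p_k)$.

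Finally, I would invoke the associativity of $T_0^H$ and $S_0^H$ to collapse these expressions: $l_k=T_0^H(T_0^H(p_1,\ldots,p_{k-1}),p_k)=T_0^H(p_1,\ldots,p_k)$ and likewise $u_k=S_0^H(p_1,\ldots,p_k)$, completing the induction. The only genuinely delicate points are the verification of logical independence needed to reuse (\ref{EQ-LOW-UP-OR}) at each step, and the argument that the coherent range of $z$ is exactly the union of the per-$x$ intervals, with no gaps appearing; both parallel the corresponding steps in the proof of Theorem \ref{LOW-UP-QC}.
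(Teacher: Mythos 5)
Your proposal is correct and follows essentially the same route as the paper, whose proof consists of the base case $l_2=T_0^H(p_1,p_2)$, $u_2=S_0^H(p_1,p_2)$ from (\ref{EQ-LOW-UP-OR}) and the remark that the rest is as in Theorem \ref{LOW-UP-QC}, i.e.\ exactly the induction via $\mathcal{D}(\mathcal{F}_k)=\mathcal{D}(\mathcal{D}(\mathcal{F}_{k-1}),E|H_k)$, monotonicity of $T_0^H(\cdot,p_k)$ and $S_0^H(\cdot,p_k)$, and associativity that you spell out. Your explicit observations that each inductive step is again an instance of the Or rule (shared consequent $E$, so $S_0^H$ rather than $S_L$ governs the upper bound) and that $E$, $H_1\vee\cdots\vee H_{k-1}$, $H_k$ remain logically independent make precise what the paper leaves implicit, at the same level of rigor as its treatment of Theorem \ref{LOW-UP-QC}.
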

\begin{proof}
 Of course, from (\ref{EQ-LOW-UP-OR}) it is $l_2=T_0^H(p_1,p_2)$ and $u_2=S_0^H(p_1,p_2)$.  The rest of the proof is similar to that one in Theorem \ref{LOW-UP-QC}.
\end{proof}
In \cite[Theorem 9]{Gili12}), by implicitly assuming $(p_1,\ldots,p_k) \in (0,1)^k$, it has been proved by a direct probabilistic analysis that \[
l_k = \frac{1}{1 + \sum_{i=1}^k \frac{1-p_i}{p_i}} \,,\;\;\;
u_k = \frac{\sum_{i=1}^k \frac{p_i}{1-p_i}}{1 + \sum_{i=1}^k
\frac{p_i}{1-p_i}} \,.
\]
By adopting the conventions $\frac{1}{\infty} = 0, \frac{1}{0}=\infty\,, \frac{\infty}{\infty} = 1$, the previous formulas hold in general for every $(p_1,\ldots,p_k) \in [0,1]^k$. In  \ref{HAM-NORM-CON} the previous expressions for the Hamacher t-norm and t-conorm have been derived by using the notion of additive generator.
\begin{example} An application of Or rule is obtained by imagining a medical scenario with a disease $E$ and $n$ symptoms $H_1, \ldots, H_n$, with $P(E|H_i)=p_i,\, i=1,\ldots,n$, and $P(E|(H_1 \vee \cdots \vee H_n) \in [l_n,u_n]$. If, for instance, $p_1=\cdots=p_n=1-\varepsilon$, from Theorem \ref{LOW-UP-OR} it follows $l_n = T_0^H(1-\varepsilon, \ldots, 1-\varepsilon) = \frac{1-\varepsilon}{1 + (n-1)\varepsilon}$ and $u_n = S_0^H(1-\varepsilon, \ldots, 1-\varepsilon) = \frac{n(1-\varepsilon)}{\varepsilon + n(1-\varepsilon)}$. Then: (i) for $\varepsilon \rightarrow 0$, we have $l_n \rightarrow 1$ and $u_n \rightarrow 1$; (ii) for $n \rightarrow +\infty$ we have $l_n \rightarrow 0$ and $u_n \rightarrow 1$. As we can see, in the second case the interval $[l_n,u_n]$ gets wider and wider as the number of premises increases. An interesting related phenomenon where additional information leads to less informative conclusion is the pseudodiagnosticity task, studied in the psychology of
uncertain reasoning (\cite{Klei13,TwDK10}).
\end{example}
\section{Further Aspects on Quasi Disjunction: from Bounds on Conclusions to Bounds on Premises in Quasi Or rule}
In this section, we study the propagation of probability bounds on the conclusion of the Quasi Or rule to its premises. We start with the case of two premises $A|H$ and $B|K$, by examining probabilistic aspects on the lower and upper bounds, $l$ and $u$, for the probability of the conclusion $\mathcal{D}(A|H, B|K)$. More precisely, given any number $\gamma \in [0,1]$, we find:\\
$(i)$ the set $\mathbf{L}_{\gamma}$ of the coherent assessments $(x,y)$ on $\{A|H, B|K\}$ such that, for each $(x,y) \in \mathbf{L}_{\gamma}$, one has $l \geq \gamma$; \\
$(ii)$ the set $\mathbf{U}_{\gamma}$ of the coherent assessments $(x,y)$ on $\{A|H, B|K\}$ such that, for each $(x,y) \in \mathbf{U}_{\gamma}$, one has $u \leq \gamma$. \\
Case $(i)$.
 Let be given $\gamma \in [0,1]$. We denote by
$\mathbf{L}_{\gamma}$ the set of coherent assessments $(x,y)$ on $\{A|H,
B|K\}$ which imply $z \geq \gamma$. Of course, $\mathbf{L}_0 = [0,1]^2$;
hence we can assume $\gamma > 0$. We recall that $l= T_0^H(x,y)$, then in order the inequality  $T_0^H(x,y)\geq \gamma$ be satisfied, it must be $x > 0$ and $y > 0$. Thus,   $l \geq \gamma$  if and only if $\frac{xy}{x+y-xy}\geq \gamma$.
We have
\begin{equation} \label{XYL}
x-l=\frac{x^2(1-y)}{x+y-xy}\geq 0 \,, \;\;\; y-l=\frac{y^2(1-x)}{x+y-xy}\geq 0 \,;
\end{equation}
then, from $l \geq \gamma$ it follows $x \geq \gamma$, $y \geq \gamma$; thus $\mathbf{L}_{\gamma}\subseteq [\gamma,1]^2$. Then,
taking into account that $x \geq \gamma$ and hence $x(1+\gamma)-\gamma> 0$, we have
\begin{equation}\label{UPGAM-QD}
\frac{xy}{x+y-xy}\geq \gamma\; \Longleftrightarrow \; y
\geq \frac{\gamma x}{x(1+\gamma)-\gamma} \,;\end{equation} therefore
\[
\mathbf{L}_{\gamma}= \left\{(x,y) : \;  \gamma \leq x \leq 1 \,,\;\; y \geq
\frac{\gamma x}{x-\gamma+\gamma x}
 \right\} \,. \] \\
Notice that $\mathbf{L}_1 = \{(1,1)\}$;
 for $x=y=\gamma \in
(0,1)$, it is $l=\frac{\gamma}{2-\gamma} < \gamma$; hence, for $\gamma \in (0,1)$, $\mathbf{L}_{\gamma}$ is a strict subset of $[\gamma, 1 ]^2$.
\\
Case $(ii)$. Of course, $\mathbf{U}_1 = [0,1]^2$;
hence we can assume $\gamma < 1$. It must be $u =$ min$\{x+y,1\} \leq \gamma$, i.e.,  $x+y \leq
\gamma$ (as $\gamma < 1$); hence $\mathbf{U}_{\gamma}$ coincides with
the triangle having the vertices $(0,0),
(0,\gamma), (\gamma,0)$; that is
\[\mathbf{U}_{\gamma} = \{(x,y): 0 \leq x \leq \gamma,\, 0 \leq y \leq \gamma - x\} \,. \]
Notice that $\mathbf{U}_0 = \{(0,0)\}$; moreover, for $\gamma \in
(0,1)$, $(\gamma, \gamma) \notin \mathbf{U}_{\gamma}$. \\
 Of course, for every $(x,y) \notin
\mathbf{L}_{\gamma} \cup \mathbf{U}_{\gamma}$, it is $l < \gamma < u$. \\
{Figure \ref{fig7} displays the sets $\mathbf{L}_{\gamma}, \mathbf{U}_{\gamma}$ when $\gamma=0.4$.}
 \begin{figure}[!ht]
 \begin{center}
 \includegraphics[ scale=\myscale]{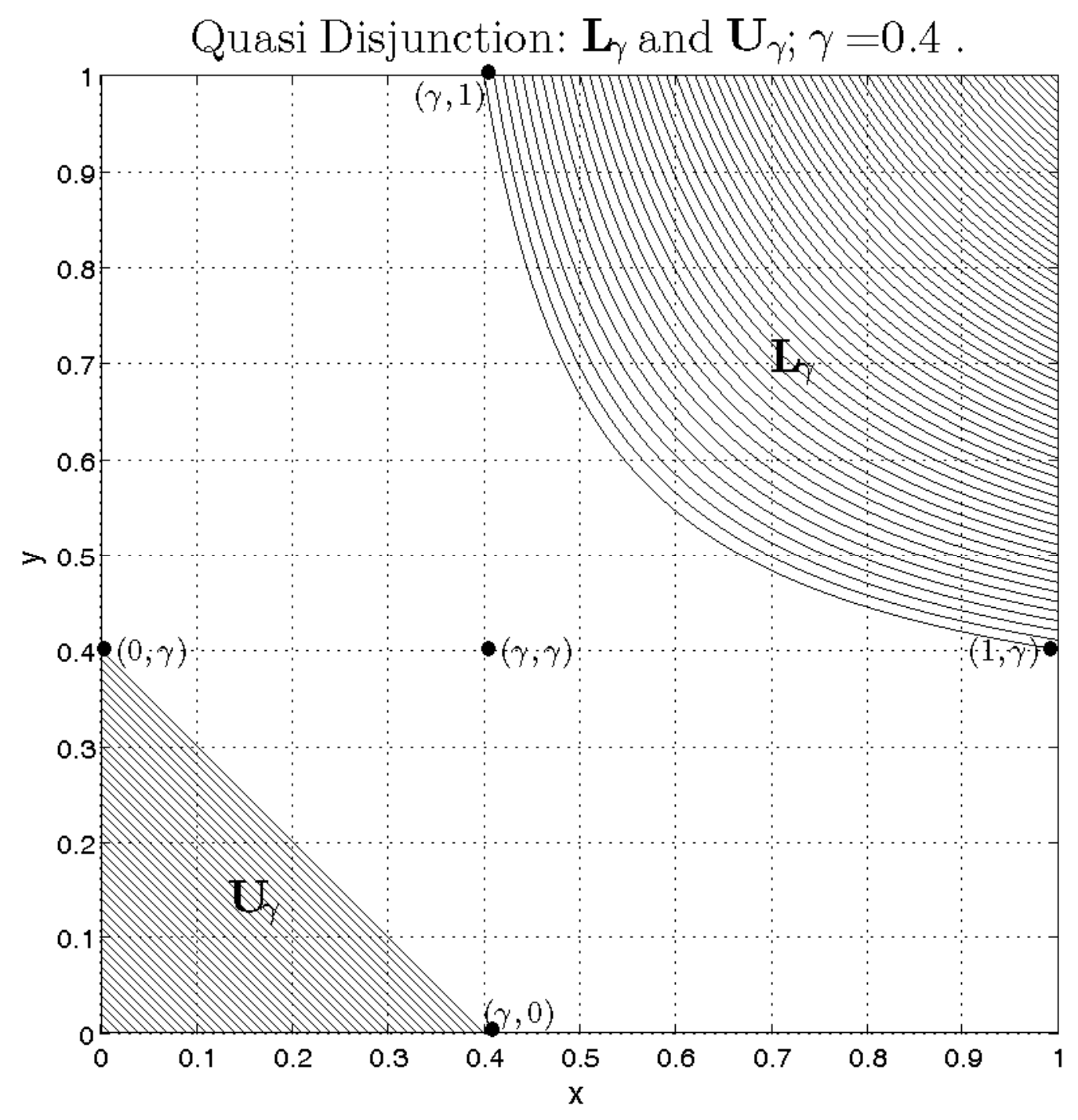}
 \caption{The sets $\mathbf{L}_{\gamma}, \mathbf{U}_{\gamma}$\,.}
 \label{fig7}
 \end{center}
 \end{figure}
In the next result we determine in general the sets $\mathbf{L}_{\gamma}, \mathbf{U}_{\gamma}$.
\begin{theorem}\label{LU-GEN-QD}
 Let be given the family $\F_n=\{E_1|H_1, \ldots, E_n|H_n\}$, with the events $E_1, H_1, \ldots, E_n, H_n$ logically independent. Moreover, for any given $\gamma \in [0,1]$ let  $\mathbf{L}_{\gamma}$  (\emph{resp.} $\mathbf{U}_{\gamma}$) be the set of the coherent assessments $(p_1, p_2, \ldots,
p_n)$ on $\F_n$ such that, for each $(p_1, p_2, \ldots, p_n) \in \mathbf{L}_{\gamma}$ (\emph{resp.} $(p_1,p_2, \ldots, p_n) \in \mathbf{U}_{\gamma}$), one has  $l \geq \gamma$ (\emph{resp.} $u \leq \gamma$), where $l$ is the lower  bound  (\emph{resp}. $u$ is the upper  bound) of the coherent extensions $z=P[\mathcal{D}(\F_n)]$. We have
\begin{small}
\begin{equation}\label{LGAMMA-QD}\begin{array}{l}
\mathbf{U}_{\gamma} = \{(p_1, \ldots, p_n) \in [0,1]^n : p_1 + \cdots +
p_n  \leq \gamma \} \,,\; \gamma < 1 \,,
\\ \\
\mathbf{L}_{\gamma} = \{(p_1, \ldots, p_n) \in [0,1]^n : \gamma \leq p_1 \leq
1 \,,\;\;  r_k \leq p_{k+1}
\,,\; k=1,\ldots,n-1\} \,,\; \gamma > 0 \,,
\end{array}
\end{equation}
\end{small}
where $r_k = \frac{\gamma l_k}{l_k-\gamma+ \gamma l_k} \,,\, l_k = T_0^H(p_1,\ldots,p_{k})$, with $\mathbf{L}_0 = \mathbf{U}_1 = [0,1]^n$.
\end{theorem}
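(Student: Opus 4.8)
The plan is to establish the two descriptions separately, reusing the two–variable analysis already carried out at the start of this section together with the associativity of quasi disjunction. By Theorem~\ref{LOW-UP-QD} the relevant bounds on $z=P[\D(\F_n)]$ are $l=l_n=T_0^H(p_1,\ldots,p_n)$ and $u=u_n=S_L(p_1,\ldots,p_n)=\min(p_1+\cdots+p_n,1)$. The set $\mathbf{U}_{\gamma}$ is the easy case. Since $\mathbf{U}_1=[0,1]^n$ we may assume $\gamma<1$, and then $u\leq\gamma$ is equivalent to $\min(p_1+\cdots+p_n,1)\leq\gamma$, i.e. to $p_1+\cdots+p_n\leq\gamma$; this is exactly the claimed description, and the argument mirrors the treatment of $\mathcal{L}_{\gamma}$ (the Lukasiewicz bound) in Theorem~\ref{LU-GEN}.

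The substantive case is $\mathbf{L}_{\gamma}$. Since $\mathbf{L}_0=[0,1]^n$ we may assume $\gamma>0$. First I would invoke the recursion $\D(\F_{k+1})=\D(\D(\F_k),E_{k+1}|H_{k+1})$, which, combined with the fact that $T_0^H$ is non-decreasing in each argument, gives $l_{k+1}=T_0^H(l_k,p_{k+1})$. Applying the two inequalities in (\ref{XYL}) with $x=l_k,\ y=p_{k+1}$ yields $l_{k+1}\leq l_k$ and $l_{k+1}\leq p_{k+1}$, so the sequence $l_1\geq l_2\geq\cdots\geq l_n$ is non-increasing and $l_n\leq p_{k+1}$ for every $k$. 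Hence $l_n\geq\gamma$ forces $l_k\geq\gamma$ for all $k$ and $p_i\geq\gamma$ for all $i$; in particular $\mathbf{L}_{\gamma}\subseteq[\gamma,1]^n$ and $p_1=l_1\geq\gamma$, which accounts for the first constraint.

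Next I would iterate the equivalence (\ref{UPGAM-QD}). Taking $x=l_k,\ y=p_{k+1}$ there, the inequality $l_{k+1}=T_0^H(l_k,p_{k+1})\geq\gamma$ holds if and only if $p_{k+1}\geq\frac{\gamma l_k}{l_k(1+\gamma)-\gamma}=r_k$. Starting from $l_1=p_1\geq\gamma$ and applying this equivalence for $k=1,\ldots,n-1$ builds the chain $l_2\geq\gamma,\ldots,l_n\geq\gamma$ precisely under the conditions $p_{k+1}\geq r_k$, giving both directions of the characterization of $\mathbf{L}_{\gamma}$; the boundary values $\mathbf{L}_0=\mathbf{U}_1=[0,1]^n$ are immediate.

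The only genuine care is needed in the sufficiency direction for $\mathbf{L}_{\gamma}$: each threshold $r_k$ depends on $l_k$, hence on $p_1,\ldots,p_k$, and the equivalence (\ref{UPGAM-QD}) presupposes $x=l_k\geq\gamma$ (so that the denominator $l_k(1+\gamma)-\gamma\geq\gamma^2>0$). Processing the indices in increasing order resolves this, since each $l_k\geq\gamma$ is secured before $r_k$ is used. As an independent cross-check, the whole statement also follows by duality: by (\ref{QD=1-QC}) one has $l=1-S_0^H(1-p_1,\ldots,1-p_n)$, so $l\geq\gamma$ is equivalent to $S_0^H(1-p_1,\ldots,1-p_n)\leq 1-\gamma$, i.e. to $(1-p_1,\ldots,1-p_n)\in\mathcal{U}_{1-\gamma}$ in the notation of Theorem~\ref{LU-GEN}; substituting $q_i=1-p_i$ and $\delta=1-\gamma$ into that theorem's description of $\mathcal{U}_{\delta}$ recovers both $p_1\geq\gamma$ and $p_{k+1}\geq r_k$, while $\mathbf{U}_{\gamma}$ corresponds symmetrically to $\mathcal{L}_{1-\gamma}$.
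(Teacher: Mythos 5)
Your proof is correct and follows essentially the same route as the paper's: the $\mathbf{U}_{\gamma}$ case via $u_n=\min(p_1+\cdots+p_n,1)$, and the $\mathbf{L}_{\gamma}$ case via the recursion $l_{k+1}=T_0^H(l_k,p_{k+1})$, using (\ref{XYL}) to force $\mathbf{L}_{\gamma}\subseteq[\gamma,1]^n$ and then iterating the equivalence (\ref{UPGAM-QD}), with your explicit attention to the positivity of the denominator $l_k(1+\gamma)-\gamma\geq\gamma^2>0$ merely making precise what the paper handles implicitly. Your duality cross-check via (\ref{QD=1-QC}) and Theorem \ref{LU-GEN} (substituting $q_i=1-p_i$, $\delta=1-\gamma$, which indeed recovers $p_1\geq\gamma$ and $p_{k+1}\geq r_k$) is a valid alternative derivation that the paper does not use for this theorem, although it employs exactly that duality device earlier in Proposition \ref{LOW-UP-QD-TWO}.
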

\begin{proof}
Of course, $\mathbf{U}_1 = [0,1]^n$, so that we can assume $\gamma <1$. It must be
$u_n = \mbox{min}(p_1 + \cdots + p_n , 1) \leq \gamma$, that
is, as $\gamma < 1$,
$p_1 + \cdots + p_n  \leq \gamma $.
Hence: $\mathbf{U}_{\gamma} = \{(p_1, \ldots, p_n) \in [0,1]^n : p_1 + \cdots +
p_n  \leq \gamma \}$. \\
We observe that $\mathbf{U}_{\gamma}$ is a convex polyhedron with vertices the points
\[
\begin{array}{l}
V_1=(\gamma,0,\dots,0), \; V_2=(0, \gamma,0,\ldots,0), \; \cdots,\\
\; V_n=(0, \ldots,0,\gamma), \; V_{n+1}=(0,0,\ldots, 0) \,.
\end{array}
\]
Moreover, the convex hull of the vertices $V_1,\ldots,V_n$ is the subset of the points $(p_1, \ldots, p_n)$
of $\mathbf{U}_{\gamma}$ such that $u_n=\gamma$, that is such that $p_1 + \cdots +
p_n = \gamma$. \\

 Of course, $\mathbf{L}_0 = [0,1]^n$, so that we can assume $\gamma>0$. We recall that $l_2, \ldots, l_n$ are the lower bounds on
$\D(\F_2), \ldots, \D(\F_n)$ associated with $(p_1,\ldots,p_n)$. Then,  from the relations
\[\D(\F_{k+1}) = \D(\D(\F_k), E_{k+1}|H_{k+1}) \,,\; k=2,\ldots,n-1 \,, \] by applying (\ref{XYL} ) with $x=l_k, y=p_{k+1}$, we have that in order the inequality $l_{k+1} \geq \gamma$ be satisfied, it must be $l_k \geq \gamma, p_{k+1} \geq \gamma,\, k=2,\ldots,n-1$. Therefore
\[
l_n \geq \gamma \; \Longrightarrow \; p_1  \geq \gamma, \ldots, p_n  \geq \gamma, l_2  \geq \gamma, \ldots, l_{n-1}  \geq \gamma \,,
\]
so that $\mathbf{L}_{\gamma} \subseteq [\gamma,1]^n$. By iteratively
applying (\ref{UPGAM-QD}), we obtain

\[
\gamma \leq p_1 \leq 1 \,,\;\;
p_2
\geq \frac{\gamma p_1}{p_1(1+\gamma)-\gamma}
\;\;\; \Longrightarrow \;\;\; l_2 \geq
\gamma \,,
\]
\[
\gamma \leq l_2 \leq 1 \,,\;\;
p_3
\geq \frac{\gamma l_2}{l_2(1+\gamma)-\gamma}
\;\;\; \Longrightarrow \;\;\; l_3 \geq
\gamma \,,
\]
\[
\vdots
\]
\[
\gamma \leq l_{n-1} \leq 1 \,,\;\; p_n \geq \frac{\gamma l_{n-1}}{l_{n-1}(1+\gamma)-\gamma} \;\;\; \Longrightarrow \;\;\; l_n \geq
\gamma .
\]\\
Therefore,  observing that  $l_1 = p_1$, we have,
\[
\mathbf{L}_{\gamma} = \{(p_1, \ldots, p_n) \in [0,1]^n : \gamma \leq p_1 \leq 1 \,,\;\; p_{k+1} \geq \frac{\gamma l_k}{l_k(1+\gamma)-\gamma}
\,,\; k=1,\ldots,n-1\} \,.
\] \\
We observe that $\mathbf{L}_1 = \{(1, \ldots,1)\}$; moreover, for
$p_1=\cdots =p_n=\gamma \in (0,1)$, we obtain (by induction)
\[l_2=\frac{\gamma}{2-\gamma} < \gamma \,,\;
l_3=\frac{\gamma}{3-2\gamma} < \gamma \,,\; l_4=\frac{\gamma}{4-3\gamma}\cdots \,,\;
l_n=\frac{\gamma}{n-(n-1) \gamma} < \gamma \,; \] hence, for
$\gamma \in (0,1)$, $\mathbf{L}_{\gamma}$ is a strict subset of
$[0,\gamma]^n$.
\end{proof}
\section{Biconditional Events, $n$-Conditional Events and Loop Rule}
\label{SUB-BICO}
We now examine the quasi conjunction of $A|B$ and $ B|A$, with  $A,\,B$ logically independent events.
We have
\[
\mathcal{C}(A|B,B|A) = (AB \vee B^c) \wedge (BA \vee A^c)\,|\,(A \vee B) =
AB \,|\, (A \vee B) \,.
\]
We observe that the conditional event $AB \,|\, (A \vee B)$ captures the notion of biconditional event\footnote{The representation of a biconditional event as a quasi conjunction was noticed in a private communication between A. Fugard and A. Gilio (January 2010).} $A\dashv\vdash B$ considered by some authors  as  the ``conjunction'' between $A|B$ and $B|A$ and has the same truth table of the ``defective biconditional''  discussed in \cite{GaBa09}; see also \cite{FPMK11}.
It can be easily verified  that, for every pair  $(x,y)\in [0,1]\times[0,1]$ the probability assessment $(x,y)$ on $\{A|B, B|A \}$
is coherent. Given any coherent assessment $(x,y)$ on $\{A|B,B|A\}$, the probability assessment $z = P(A\dashv\vdash B)$, is
a coherent extension of $(x,y)$ if and only if
\[
z=\left\{\begin{array}{ll}
0 & (x,y)=(0,0),\\
\frac{xy}{x+y-xy} & (x,y)\ne (0,0).
\end{array}\right.
\]
We can study the coherence of the assessment $\P=(x,y,z)$ on the family \[\mathcal{F} = \{A|B, B|A, A\dashv\vdash B\}=  \{A|B, B|A, AB \,|\, (A \vee B)\}\,,\]
by the geometrical approach described in Section \ref{PRELIM}. In such a case, as the events of the family are not logically independent, the constituents generated by the family and contained in $A\vee B$ are: $C_1 = AB,\, C_2 = AB^c,\, C_3 = A^cB$. We distinguish two cases: (i) $(x,y)\neq (0,0)$; (ii) $(x,y)= (0,0)$. \\
(i) If $(x,y)\neq (0,0)$ the corresponding points $Q_h$'s are
$Q_1 = (1,1,1),\, Q_2 = (x,0,0),\,  Q_3 = (0,y,0)$,
and, in our case, the coherence of $\P$ simply amounts to the
geometrical condition $\P \in \I$, where $\I$ is the triangle with vertices  $Q_1,Q_2,Q_3$.
Based on the equation of the plane containing  $\I$, we have that $\P$ is coherent if and only if:
$z = \frac{xy}{x+y-xy}$. \\
(ii) If $(x,y)=(0,0)$, then $Q_2=Q_3=(0,0,0)$ and the convex hull $\I$ is the segment $Q_1Q_2$. Then, $\P=(0,0,z)$ is coherent if and only if $z=0$.\\ Then,  the value $z$ is a coherent extension of $(x,y)$ if and only if
\[
z=T_0^H(x,y)=\left\{\begin{array}{ll}
0 & (x,y)=(0,0),\\
\frac{xy}{x+y-xy} & (x,y)\ne (0,0)\,,
\end{array}\right.
\]
where  $T_0^H(x,y)$ is  the Hamacher t-norm, with parameter $\lambda=0$, defined by formula (\ref{HAM-T0}).
In agreement with Remark \ref{REM-SC}, we observe that
\[
 T_L(x,y) \leq T_0^H(x,y) \leq S_0^H(x,y).
\]
\subsection{Generalizing Biconditional Events: An Application to Loop rule}
As  shown before, given any (non impossible) events $A_1, A_2$, the biconditional event associated with them is given by
\[
A_1\dashv\vdash A_2 = \C(A_2|A_1, A_1|A_2) = A_1A_2 \,|\,(A_1 \vee A_2) \,.
\]
The notion of biconditional event can be generalized by defining the \linebreak $n$-conditional event associated with $n$ (non impossible) events $A_1, \ldots, A_n$ as
\[
A_1\dashv\vdash A_2\dashv\vdash \cdots \dashv\vdash A_n = \C(A_2|A_1, \ldots, A_n|A_{n-1}, A_1|A_n) \,.
\]
Let $C_0,C_1,\ldots,C_m$ be the constituents generated by the conditional events $A_2|A_1, \ldots, A_n|A_{n-1}, A_1|A_n$. We set $C_0 = A_1^cA_2^c \cdots A_n^c$ and $C_1 = A_1A_2 \cdots A_n$; then, for each $h = 2,\ldots,m$, it is $C_h = A_{i_1} \cdots A_{i_r}A_{i_{r+1}}^c \cdots A_{i_n}^c$, with $1 \leq r<n$. As it can be easily verified, the truth value of the $n$-conditional associated with $C_h$ is true, or false, or void, according to whether $h=1$, or $h>1$, or $h=0$; then it holds that
\[
\C(A_2|A_1, \ldots, A_n|A_{n-1}, A_1|A_n) = A_1 \cdots A_n \,|\,(A_1 \vee \cdots \vee A_n) \,.
\]
In (\cite{Gili04}), where also the relationship with conditional objects (\cite{DuPr94}) has been studied, the previous formula has been obtained by a suitable inductive reasoning, by showing that: \\
(i) $\C(A_2|A_1, \ldots, A_n|A_{n-1}) = (E_0 \vee \cdots \vee E_{n-1})|(A_1 \vee \cdots \vee A_{n-1})$, \\
where $E_0 = A_1 \cdots A_n \,,\, E_1 = A_1^cA_2 \cdots A_n \,,\, \ldots \,,\, E_{n-2} = A_1^c \cdots A_{n-2}^cA_{n-1}A_n \,$, $E_{n-1} = A_1^c \cdots A_{n-1}^c$; \\
(ii) then
\[
\C(A_2|A_1, \ldots, A_n|A_{n-1}, A_1|A_n) = \C[(E_1 \vee \cdots \vee E_n)|(A_1 \vee \cdots \vee A_{n-1}), A_1|A_n] =
\]
\begin{equation}\label{QC-LOOP-N}
= A_1 \cdots A_n|(A_1 \vee \cdots \vee A_n) \,.
\end{equation}
Of course, for any given derangement (a permutation with no fixed point) $(i_1,i_2,\ldots,i_n)$ of $(1,2,\ldots,n)$, we have
\[
\C(A_{i_1}|A_1, \ldots, A_{i_{n-1}}|A_{n-1}, A_{i_n}|A_n)=\C(A_2|A_1, \ldots, A_n|A_{n-1}, A_1|A_n) \,;
\]
that is, the $n$-conditional $A_1\dashv\vdash A_2\dashv\vdash \cdots \dashv\vdash A_n$ can be represented as the quasi conjunction of the conditional events $A_2|A_1, \ldots, A_n|A_{n-1}, A_1|A_n$, or equivalently as the quasi conjunction of the conditional events $A_{i_1}|A_1, \ldots,$ $A_{i_{n-1}}|A_{n-1}, A_{i_n}|A_n$.
In particular for $(i_1,i_2,\ldots,i_n)=(n,1,2,\ldots,n-1)$ we have
\begin{equation}\label{QC-LOOP}
\C(A_1|A_2, \ldots, A_{n-1}|A_n, A_n|A_1) = \C(A_2|A_1, \ldots, A_n|A_{n-1}, A_1|A_n) \,.
\end{equation}
As a consequence, we can immediately obtain the
probabilistic interpretation of {\em Loop} rule (\cite{KrLM90}). Given $n$ logically
independent events $A_1, \, A_2, \; \ldots, \, A_n$, Loop rule is
the following one:
\begin{equation}\label{LOOP-1}
A_1 \; \s~ \, A_2 \, , \; A_2 \; \s~ \, A_3 \, , \; \cdots \, , \;
A_n \; \s~ \, A_1  \;\; \Longrightarrow \;\; A_1 \; \s~ \, A_n \,.
\end{equation}
In \cite{KrLM90} it has also been proved that, for every $i,j = 1,2,\ldots,n$, it holds that
\begin{equation}\label{LOOP-2}
A_1 \; \s~ \, A_2 \,, \; A_2 \; \s~ \, A_3 \,, \; \cdots \,, \;
A_n \; \s~ \, A_1  \;\; \Longrightarrow \;\; A_i \; \s~ \, A_j
\,.
\end{equation}
\subsection{Probabilistic Aspects on Loop Rule}
In our probabilistic approach, formula (\ref{LOOP-2}), which generalizes formula  (\ref{LOOP-1}), can be obtained by the following steps: \\
- given any p-consistent family of conditional events $\F$, from Corollary \ref{ENT-QC} it holds that $\F$ p-entails $\C(\F)$; \\
-  defining $\F = \{A_2|A_1, \ldots, A_{n}|A_{n-1}, A_1|A_n\}$, it can be checked that  $\F$ is p-consistent; then, for every $i,j = 1,2,\ldots,n$, by $(\ref{QC-LOOP-N})$ $\C(\F) \subseteq A_i|A_j$; hence $\C(\F)$ p-entails $A_i|A_j$;  moreover, $\F$ p-entails $\C(\F)$ and then  $\F$ p-entails $A_i|A_j$.
\begin{remark}
By  Definition \ref{ENTAIL-FAM} and formulas (\ref{QC-LOOP}) and (\ref{LOOP-2}), for any given derangement $(i_1,i_2,\ldots,i_n)$ of $(1,2,\ldots,n)$, we obtain the following inference rule ({\em Generalized Loop})
\begin{equation}\label{GEN-LOOP}
\{A_2|A_1, \ldots, A_{n}|A_{n-1}, A_1|A_n\} \begin{array}{l}\Rightarrow_p\\ \Leftarrow_p\end{array}  \{A_{i_1}|A_1, \ldots, A_{i_{n-1}}|A_{n-1}, A_{i_n}|A_n\}
\end{equation}
\end{remark}
The Loop rule has been studied by a direct probabilistic reasoning in \cite{Gili04}, by exploiting a suitable probabilistic condition named {\em Cs\'asz\'ar's condition}, studied in the framework of an axiomatic approach to probability in \cite{Csas55}. This condition in a particular case reduces to the third axiom of conditional probabilities.
A numerical inference rule named \textit{generalized Bayes theorem}, connected with Cs\'asz\'ar's condition and with Loop rule, has been studied in \cite{AmDP91}; see also \cite{AmDP94,DuPT90}. Below, we reconsider  an example introduced in \cite{Gili04} to  illustrate the generalized Loop rule and  p-entailment of $n$-conditionals.
\begin{example}
Five friends, Linda, Janet, Steve, George, and Peter, have been invited to a party. We define the events:  $A_1=$``\emph{Linda goes to the  party}', \ldots, $A_5=$\emph{Peter goes to the party}; moreover, we assume that $A_1, \ldots, A_5$ are logically independent.
We consider the following  knowledge base:
$\{$``\emph{if Linda goes to the party, then Janet will do the same}'',
\ldots,
``\emph{if George goes to the party, then  Peter will do the same}'',
``\emph{if Peter goes to the party, then  Linda will do the same}''$\}$. Then, for the associated  (p-consistent) family of conditional events $\F=\{A_2|A_1, \ldots, A_{5}|A_4, A_1|A_5\}$, we have
\[
\C(\F)=A_1A_2\cdots A_5|(A_1\vee A_2\vee \cdots\vee A_5)=A_1\dashv\vdash A_2\dashv\vdash \cdots \dashv\vdash A_5\,.
\]
By generalized Loop rule, for every derangement $(i_1,\ldots,i_5)$ of $(1,\ldots,5)$, it holds that
\[
\{A_2|A_1, \ldots, A_{5}|A_{4}, A_1|A_5\} \begin{array}{l}\Rightarrow_p\\ \Leftarrow_p\end{array}  \{A_{i_1}|A_1, \ldots, A_{i_{4}}|A_{4}, A_{i_5}|A_5\}\,.
\]
For any given subset  $\{B_1,\ldots,B_n\}\subset \{A_1,\ldots,A_5\}$,  $n=2,3,4$,  we have  $B_1\dashv\vdash \cdots \dashv\vdash B_n=B_1\cdots B_n|(B_1\vee \cdots\vee B_n)$. This  $n$-conditional is  associated with the conditional  assertion ``\emph{if at least one of $n$ given friends  among Linda, Janet, Steve, George, and Peter, goes to the party, then  all $n$ friends will go to the party}''.
We have
\[
A_1\cdots A_5|(A_1\vee \cdots\vee A_5) \subseteq B_1\cdots B_n|(B_1\vee \cdots\vee B_n)\,;
\]
therefore  $A_1\dashv\vdash \cdots \dashv\vdash A_5$ p-entails $B_1\dashv\vdash \cdots \dashv\vdash B_n$. Finally, as $\F$ p-entails $\C(\F)$, we have that for every subset  $\{B_1,\ldots,B_n\}$, $n=2,3,4$,
the family $\F$ p-entails the $n$-conditional  $B_1\dashv\vdash \cdots \dashv\vdash B_n$.
\end{example}
\section{Conclusions}
In this paper we have examined probabilistic concepts connected with
the inference rules Quasi And, Quasi Or, Or, and generalized Loop.
These are linked with Adams' probabilistic analysis of conditionals,
and play an important role in applications to nonmonotonic
reasoning, to the psychology of uncertain reasoning and to semantic
web. We have considered, in a coherence-based setting, the
extensions of a given probability assessment on $n$ conditional
events to their quasi conjunction and quasi disjunction, by also
examining some cases of logical dependencies. In our probabilistic
analysis we have shown that the lower and upper probability bounds
computed in the different cases coincide with some well known
t-norms and t-conorms: minimum, product, Lukasiewicz and Hamacher
t-norms, and their dual t-conorms. We have shown that, for the Or
rule, the quasi conjunction and quasi disjunction of the premises
are equal.  Moreover, they coincide with the conclusion of the rule.
We have identified the relationships among coherence, inclusion
relation and p-entailment. Finally, we have considered biconditional
events and we have introduced the notion of $n$-conditional event,
by obtaining a probabilistic interpretation for a generalized Loop
rule. In Appendix C we give explicit expressions for the Hamacher
t-norm and t-conorm in the unitary hypercube  $[0,1]^k$. As a "take
home message", the results obtained in our coherence-based
probabilistic approach can be exploited in all researches in
nonmonotonic reasoning, as made for instance in
\cite{GiOv12,Klei13,PfKl06,PfKl09}. Future work should deepen the
theoretical aspects and applications which connect conditional
probability with t-norms and t-conorms, in  relation to inference
patterns in nonmonotonic reasoning. In particular, the
representation of probability bounds for the conditional conclusions
of some inference patterns involving conditionals in terms of
t-norms and t-conorms is a topic that could be expanded. Finally, a
relevant topic for further research concerns the study of more
general definitions for the logical operations of conjunction and
disjunction among conditionals. Such new logical operations should
be defined in a way such that the usual probabilistic properties be
preserved. Some results on this topic have been given in
\cite{GiSa13a}.
\\ \ \\
{\bf Acknowledgements.}
The authors  thank the editors and  four anonymous reviewers for their valuable criticisms and comments, which were helpful in improving the paper.
The authors also acknowledge Frank Lad for his useful suggestions regarding the linguistic quality  of some parts of the manuscript.

\newpage
\appendix
\section{t-norms and t-conorms.}
\label{T-NORM}
We recall below the notions of t-norm and t-conorm  (see \cite{GMMP11,KlMP00,KlMP05}).
\begin{definition}{\rm
A {\em t-norm} is a function $T:[0,1]^2\longrightarrow [0,1]$ which satisfies,
for all $x, y, z \in [0,1]$, the following four axioms:
\[
\begin{array}{lll}
(T1) \hspace{2 mm} & T(x,y) = T(y,x), & \hspace{5 mm} (commutativity) \\
(T2) \hspace{2 mm} & T(x,T(y,z)) = T(T(x,y),z), & \hspace{5 mm} (associativity) \\
(T3) \hspace{2 mm} & T(x,y) \leq T(x,z) \;\; \mbox{whenever} \; y \leq z, & \hspace{5 mm} (monotonicity) \\
(T4) \hspace{2 mm} & T(x,1) = x. & \hspace{5 mm} (boundary \;
condition)
\end{array}
\]
}\end{definition}
We recall below some basic t-norms, namely, the \emph{minimum}  $T_M$ (which is the greatest t-norm), the product $T_P$, the \emph{{\L}ukasiewicz} t-norm $T_L$:
\[
\begin{array}{lll}
T_M(x,y) = \mbox{min}(x,y), & T_P(x,y) = x\cdot y\, , & T_L(x,y) = \mbox{max}(x+y-1, 0).
\end{array}
\]
We also recall that   the \emph{Hamacher} t-norm $T_{\lambda}^H$, with parameter $\lambda \in [0,
\infty]$, is
\begin{equation}\label{HAM-T}
T_{\lambda}^H(x,y)
= \left\{\begin{array}{ll} T_D(x,y), & \lambda=\infty, \\
0, & \lambda=0 \; \mbox{and} \; (x,y)=(0,0) \\
\;\frac{xy}{\lambda+(1-\lambda)(x+y-xy)}, &
\mbox{otherwise},
\end{array}\right.
\end{equation}
where the t-norm $T_D(x,y)$ ({\em drastic product}) is defined as
\[
T_D(x,y) = \left\{\begin{array}{ll} 0, & (x,y) \in [0,1)^2, \\
\mbox{min}(x,y), & \mbox{otherwise}\,.
\end{array}\right.
\]
In particular, the Hamacher t-norm $T_1^H$ is the product t-norm $T_p$.
\begin{definition}{\rm
A {\em t-conorm} is a function $S: [0,1]^2 \longrightarrow [0,1]$
which satisfies, for all $x, y, z \in [0,1]$,  $(T1)-(T3)$ and
\[
\begin{array}{lll}
(S4) \hspace{5 mm} & S(x,0) = x. & \hspace{15 mm} (boundary \;
condition)
\end{array}
\]
}\end{definition}
T-conorms can  be equivalently introduced as dual operations of t-norms. A function $S: [0,1]^2 \longrightarrow [0,1]$, is a t-conorm if and only if there exists a t-norm $T$ such that for all $(x,y)\in [0,1]^2$ either one of the two equalities holds:
$S(x,y)=1-T(1-x,1-y)\,,   T(x,y)=1-S(1-x,1-y)\,.$ Then, the dual t-conorm of $T_M$ is the \emph{maximum} $S_M$, i.e. $S_M(x,y) = \mbox{max}(x,y)$.
The dual t-conorm of $T_P$ is the \emph{probabilistic sum} $S_P$, i.e.
\[
S_P(x,y) = 1-(1-x)(1-y)=x+y-x\cdot y.
\]
The {\L}ukasiewicz t-conorm, which is the dual t-conorm of $T_L$, is
\[
S_L(x,y)=\min(x+y,1)\,.
\]
Moreover, the Hamacher t-conorm $S_{\lambda}^H$  with parameter $\lambda \in [0,\infty]$,  which is the dual t-conorm of $T_{\lambda}^H$, is
\begin{equation}\label{HAM-S}
S_{\lambda}^H(x,y)
= \left\{\begin{array}{ll} S_D(x,y), & \lambda=\infty, \\
1, & \lambda=0 \; \mbox{and} \; x=y=1, \\
\;\frac{x+y-xy-(1-\lambda)xy}{1-(1-\lambda)xy}, &
\mbox{otherwise},
\end{array}\right.
\end{equation}
where  the t-conorm $S_D(x,y)$ ({\em drastic sum}) is defined as
\[
S_D(x,y) = \left\{\begin{array}{ll} 1, & (x,y) \in (0,1]^2, \\
\mbox{max}(x,y), & \mbox{otherwise}.
\end{array}\right.
\]
In particular, the Hamacher t-conorm $S_1^H$ is the probabilistic sum $S_p$.
\section{t-norms and t-conorms in $[0,1]^k$.}
\label{SEZ-TNORMN}
We recall that  since  t-norms and t-conorms are associative they can be easily extended in a unique way to a $k$-ary operation for arbitrary integer $k\geq 2 $  by induction (see \cite{GMMP09,GMMP11,KlMP05}).
Let $T$ be a t-norm (introduced as a binary operator), for any integer $k\geq 2$ the extension of $T$  is defined
as
 \[
T(p_1,p_2,\ldots,p_k)=\left\{ \begin{array}{ll}
 T(T(p_1,\ldots,p_{k-1}),p_k), & $if $ k>  2,\\
 T(p_1,p_2), & $if $ k=  2.
 \end{array}\right.
\]
Let $S$ be a t-conorm (introduced as a binary operator), for any integer $k\in\mathbb{N}\bigcup \{0\} $ the extension of $S$  is defined
as
\[
S(p_1,p_2,\ldots,p_k)=\left\{ \begin{array}{ll}
 S(S(p_1,\ldots,p_{k-1}),p_k), & $if $ k>2,\\
 S(p_1,p_2), & $if $ k=2.
 \end{array}\right.
\]
If $(T,S)$ is a pair of mutually dual t-norms and t-conorms, then
\begin{eqnarray*}
\label{TCONORMN}  S(p_1,\ldots,p_k)=1-T(1-p_1,\ldots,1-p_k)\,,\\
\label{TNORMN} T(p_1,\ldots,p_k)=1-S(1-p_1,\ldots,1-p_k)\,.
\end{eqnarray*}
Finally, we recall that
\[
\begin{array}{ll}
 T_M(p_1,\ldots,p_k) = \mbox{min}(p_1,\ldots,p_k), \;\; S_M(p_1,\ldots,p_k) = \mbox{max}(p_1,\ldots,p_k)\,,\\
 T_p(p_1,\ldots,p_k) = p_1 \cdots p_k, \;\;\;
 S_p(p_1,\ldots,p_k) = 1-(1-p_1)\cdots (1-p_k),\\
T_L(p_1,p_2,\ldots,p_k) = \max(p_1+p_2+\ldots+p_k-(k-1), 0),\\
 S_L(p_1,p_2,\ldots,p_k) =  \min(p_1+p_2+\ldots+p_k,1).
\end{array}
\]

\section{Hamacher t-norm and t-conorm in $[0,1]^k$} \label{HAM-NORM-CON}
In this appendix, by using the notion of additive generator, we  give  self contained constructions  of the extensions of the Hamacher t-norm  and  t-conorm with $\lambda=0$  to $[0,1]^k$. \\
We recall the notion of an additive generator (if any) of a t-norm (\cite{KlMP00,KlMP05}).
\begin{definition}
An additive generator $t:[0,1]\longrightarrow [0,\infty]$ of a t-norm $T$ is a strictly decreasing function which is also right continuous in 0 and satisfies $t(1)=0$, such that for all $(x,y)\in[0,1]^2$ we have
\[
t(x)+t(y)\in Ran(t)\cup [t(0), \infty]
\;\mbox{ and }\;
T(x,y)=t^{-1}(t(x)+t(y))\;,
\]
where $Ran(t)=\{t(x): x\in[0,1]\}$ and  $t^{-1}$ is the pseudo inverse of $t$.
\end{definition}
If $t$ is an additive generator of some t-norm $T$, then we have
\begin{equation}\label{t-generatorn}
T(p_1,p_2,\ldots,p_k)=t^{-1}(t(p_1)+t(p_2)+\ldots+t(p_k)) \,.
\end{equation}

We first observe that: if   $(x=0, y=0)$, then $T_0^H(x,y)=0$; if   $(x=0, y>0)$ or $(x>0,y=0)$, then  $T_0^H(x,y)= \frac{xy}{x+y-xy}=0$; if  $(x>0,y>0)$, then
\[
\begin{array}{ll}\label{EQ-T0<1}
T_0^H(x,y)=\frac{xy}{x+y-xy}=\frac{xy}{x(1-y)+y(1-x)+xy}
=\frac{1}{
\frac{1-x}{x}+\frac{1-y}{y}+1}
>0 .
\end{array}
\]Thus, $T_0^H$  can be equivalently  redefined  as
\begin{equation}\label{HAM-BIS}
 T_0^H(x,y) =
\left\{\begin{array}{ll} 0, & (x=0)\vee (y=0)\,,
\\ \frac{1}{\frac{1-x}{x}+\frac{1-y}{y}+1}, & (x\neq 0)\wedge (y\neq 0)\,.
\end{array}\right.
\end{equation}
We have (see also \cite{GMMP09,KlMP00})
\begin{proposition}\label{PROP-T0N}
Let \(T_{0}^{H}\) be the Hamacher t-norm with  $\lambda=0  $. Given an integer $k\geq 2$, the extension of $T_0^H$ to $[0,1]^k$ is
\begin{equation}\label{EQ-T0N}
T_0^H(p_1,p_2,\ldots,p_k)
= \left\{\begin{array}{ll} 0, &  p_i=0 \mbox{ for at least one } i,
\\
\frac{1
}
{
\sum_{i=1}^k\frac{1-p_i}{p_i}+1
}
 \,, & p_i> 0 \mbox{ for } i=1,\ldots, k\,.
\end{array}\right.
\end{equation}
\end{proposition}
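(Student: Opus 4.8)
The plan is to exhibit an explicit additive generator of $T_0^H$ and then read off the $k$-ary extension directly from formula (\ref{t-generatorn}). The rewritten binary form (\ref{HAM-BIS}) suggests the candidate $t(x)=\frac{1-x}{x}$ for $x\in(0,1]$, extended by $t(0)=\infty$. Indeed, writing $w=t(x)=\frac{1}{x}-1$ and solving for $x$ gives the pseudo-inverse $t^{-1}(w)=\frac{1}{w+1}$ (with the convention $\frac{1}{\infty}=0$), so that $t^{-1}(t(x)+t(y))=\frac{1}{\frac{1-x}{x}+\frac{1-y}{y}+1}$, which is precisely the second branch of (\ref{HAM-BIS}).

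First I would verify that $t$ meets all the requirements in the definition of an additive generator: it is strictly decreasing on $(0,1]$ since its derivative $-1/x^2$ is negative; it is right continuous at $0$ because $t(x)\to+\infty=t(0)$ as $x\to0^+$; and $t(1)=0$. Its range is $\mathrm{Ran}(t)=[0,\infty]$, so $t(x)+t(y)\in[0,\infty]=\mathrm{Ran}(t)\cup[t(0),\infty]$ for every $(x,y)$, and the defining identity $T_0^H(x,y)=t^{-1}(t(x)+t(y))$ holds on the interior by the computation above. The boundary cases are absorbed by the pseudo-inverse: if $x=0$ (or $y=0$) then $t(x)+t(y)=\infty$ and $t^{-1}(\infty)=0$, matching the first branch of (\ref{HAM-BIS}). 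This confirms that $t$ is a genuine additive generator of $T_0^H$.

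Once this is established, the $k$-ary extension follows immediately from (\ref{t-generatorn}):
\[
T_0^H(p_1,\ldots,p_k)=t^{-1}\Big(\sum_{i=1}^k t(p_i)\Big)=t^{-1}\Big(\sum_{i=1}^k \frac{1-p_i}{p_i}\Big)=\frac{1}{\sum_{i=1}^k \frac{1-p_i}{p_i}+1}.
\]
This is the second branch of (\ref{EQ-T0N}) when all $p_i>0$. If some $p_i=0$, then $t(p_i)=\infty$ forces the argument of $t^{-1}$ to be $\infty$, whence $T_0^H(p_1,\ldots,p_k)=t^{-1}(\infty)=0$, giving the first branch. The only delicate point, and the main obstacle, is the careful bookkeeping of the boundary value $t(0)=\infty$ and the pseudo-inverse convention $t^{-1}(\infty)=0$, so that the single closed-form expression collapses to $0$ exactly when one argument vanishes. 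As an alternative that sidesteps the pseudo-inverse formalism, one could argue by induction on $k$, using $T_0^H(p_1,\ldots,p_k)=T_0^H(T_0^H(p_1,\ldots,p_{k-1}),p_k)$ together with the identity $\frac{1-q}{q}=\sum_{i=1}^{k-1}\frac{1-p_i}{p_i}$ for $q=T_0^H(p_1,\ldots,p_{k-1})$; the inductive step then reduces to the binary formula (\ref{HAM-BIS}).
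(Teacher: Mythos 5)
Your proposal is correct and follows essentially the same route as the paper's own proof: both exhibit the additive generator $t(x)=\frac{1-x}{x}$ with $t(0)=+\infty$, compute the pseudo-inverse $t^{-1}(s)=\frac{1}{1+s}$ (with $t^{-1}(+\infty)=0$), verify the binary identity against (\ref{HAM-BIS}), and then read off the $k$-ary formula from (\ref{t-generatorn}). Your version is in fact slightly more complete than the paper's, since you explicitly check the generator axioms and spell out the boundary bookkeeping when some $p_i=0$, which the paper handles only implicitly via the conventions $\frac{1}{\infty}=0$, $\frac{1}{0}=\infty$.
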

\begin{proof}
We observe that, considering the function $t:[0,1]\longrightarrow [0,+\infty]$ defined as $t(x)=\frac{1-x}{x}$, with the convention that $t(0)=\lim_{x\rightarrow 0^+}\frac{1-x}{x}=+\infty$, it holds
$t^{-1}(s)=\frac{1}{1+s}$, if $s \in [0,\infty]$, with $t^{-1}(+\infty)=0$. Then,  by applying the conventions $\frac{1}{\infty}=0, \frac{1}{0}=\infty$  and recalling (\ref{HAM-BIS}),   for every $(x,y)\in [0,1]^2$ we have
\[
t^{-1}(t(x) + t(y))=\frac{1}{1+
\frac{1-x}{x}+\frac{1-y}{y}}=T_0^H(x,y) ;
\]
As the function $t(x)=\frac{1-x}{x}$ is the additive generator of $T_0^H$, we have
\[
\begin{array}{l}
T_0^H(p_1,p_2,\ldots,p_k) = t^{-1}(\sum_{i=1}^kt(p_i))
= t^{-1}(\sum_{i=1}^k\frac{1-p_i}{p_i})=
\frac{1}{1 + \sum_{i=1}^k\frac{1-p_i}{p_i}} \,.
\end{array}
\]
\end{proof}

Now, we observe that: if $x=1$ and $y=1$, then $S_0^H(x,y)=1$;
if $(x=1, y<1)$ or $(x<1,y=1)$, then $S_0^H(x,y)=\frac{x+y-2xy}{1-xy}=1$;
 if $x<1$ and $y<1$ we have
\[
\begin{array}{ll}\label{EQ-S0<1}S_0^H(x,y)=
\frac{x+y-2xy}{1-xy}=\frac{x(1-y)+y(1-x)}{x(1-y)+y(1-x)+(1-x)(1-y)}
=\\ \\ \frac{\frac{x}{(1-x)}(1-x)(1-y)+\frac{y}{(1-y)}(1-x)(1-y)}{
\frac{x}{(1-x)}(1-x)(1-y)+\frac{y}{(1-y)}(1-x)(1-y)+(1-x)(1-y)}=
\frac{\frac{x}{(1-x)}+\frac{y}{(1-y)}}{\frac{x}{(1-x)}+\frac{y}{(1-y)}+1}<1\,.
\end{array}
\]

Thus, the Hamacher  t-conorm $S_0^H: [0,1]^2 \longrightarrow [0,1]$  can be equivalently  redefined  as
\begin{equation}\label{EQ-S0-BIS}
 S_0^H(x,y) = \left\{\begin{array}{ll} 1, & (x=1)\vee(y=1),
\\ \frac{\frac{x}{(1-x)}+\frac{y}{(1-y)}}{\frac{x}{(1-x)}+\frac{y}{(1-y)}+1}, &
(x< 1)\wedge ( y< 1).
\end{array}\right.
\end{equation}
By observing that $S(p_1,p_2,\ldots,p_k)=1-T(1-p_1,1-p_2,\ldots,1-p_k)$,
it immediately follows
\begin{proposition}\label{PROP-S0N}
Let \(S_{0}^{H}\) be the Hamacher t-conorm with  $\lambda=0  $. Given an integer $k\geq 2$, for any vector $(p_1,p_2,\ldots,p_k) \in[0,1]^k$ it holds that
\begin{equation}\label{EQ-S0N}
S_0^H(p_1,p_2,\ldots,p_k)
= \left\{\begin{array}{ll} 1, &  p_i=1 \mbox{ for at least one } i,
\\
\frac{\sum_{i=1}^k\frac{p_i}{1-p_i}
}
{
\sum_{i=1}^k\frac{p_i}{1-p_i}+1
}
 \,, & p_i< 1 \mbox{ for } i=1,\ldots, k.
\end{array}\right.
\end{equation}
\end{proposition}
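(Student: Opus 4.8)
The plan is to reduce the claim to Proposition \ref{PROP-T0N} by exploiting the De Morgan duality between a t-norm and its dual t-conorm. Since $S_0^H$ is by definition the dual t-conorm of the Hamacher t-norm $T_0^H$, the $k$-ary duality identity recalled in \ref{SEZ-TNORMN} gives, for every $(p_1,\ldots,p_k) \in [0,1]^k$,
\[
S_0^H(p_1,\ldots,p_k) = 1 - T_0^H(1-p_1,\ldots,1-p_k) \,.
\]
First I would set $q_i = 1 - p_i$ for $i=1,\ldots,k$, so that the right-hand side becomes $1 - T_0^H(q_1,\ldots,q_k)$, and then substitute the explicit expression for the $k$-ary Hamacher t-norm established in Proposition \ref{PROP-T0N}.

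Next I would carry out the two-case analysis induced by that formula, aligning the case conditions under the substitution. The condition ``$p_i = 1$ for at least one $i$'' is equivalent to ``$q_i = 0$ for at least one $i$'', in which case Proposition \ref{PROP-T0N} yields $T_0^H(q_1,\ldots,q_k)=0$ and hence $S_0^H(p_1,\ldots,p_k) = 1 - 0 = 1$, matching the first branch. Otherwise $p_i < 1$ for all $i$, i.e. $q_i > 0$ for all $i$, and Proposition \ref{PROP-T0N} gives
\[
T_0^H(q_1,\ldots,q_k) = \frac{1}{\sum_{i=1}^k \frac{1-q_i}{q_i} + 1} = \frac{1}{\sum_{i=1}^k \frac{p_i}{1-p_i} + 1} \,,
\]
where I have used $1 - q_i = p_i$ and $q_i = 1 - p_i$. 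Substituting into the duality identity and simplifying the elementary fraction yields
\[
S_0^H(p_1,\ldots,p_k) = 1 - \frac{1}{\sum_{i=1}^k \frac{p_i}{1-p_i} + 1} = \frac{\sum_{i=1}^k \frac{p_i}{1-p_i}}{\sum_{i=1}^k \frac{p_i}{1-p_i} + 1} \,,
\]
which is exactly the second branch of the asserted formula.

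Since both cases are covered, this completes the argument. I do not anticipate a genuine obstacle: the whole proof rests on the already-proved companion result for $T_0^H$ together with duality, so the only care needed is in matching the two case conditions under the change of variable $q_i = 1-p_i$ and in the routine fraction manipulation. As a consistency check for the base case $k=2$, the redefinition (\ref{EQ-S0-BIS}) already exhibits $S_0^H$ in the same rational form, so the general formula specializes correctly.
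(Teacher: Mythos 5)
Your proposal is correct and follows exactly the route the paper takes: the paper derives Proposition \ref{PROP-S0N} directly from Proposition \ref{PROP-T0N} via the $k$-ary duality identity $S(p_1,\ldots,p_k)=1-T(1-p_1,\ldots,1-p_k)$ recalled in \ref{SEZ-TNORMN}, stating that the formula ``immediately follows.'' Your write-up merely makes explicit the case-matching under $q_i=1-p_i$ and the fraction simplification that the paper leaves implicit, so there is nothing to add or correct.
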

\begin{remark}
We observe that the Hamacher t-norm $T_0^H$ and Hamacher \linebreak t-conorm $S_0^H$ coincide, respectively for $\alpha = 1$ and $\alpha = -1$, with the Dombi operator defined as (\cite{Domb09}):
\[
o(p_1,\ldots,p_k) = \frac{1}{1 + \left(\sum_{i=1}^k\left(\frac{1-p_i}{p_i}\right)^{\alpha}\right)^{\frac{1}{\alpha}}} \,.
\]
\end{remark}
\end{document}